\newtheorem{theorem}{Theorem}[section]
\newtheorem{lemma}[theorem]{Lemma}
\newtheorem{proposition}[theorem]{Proposition}
\theoremstyle{definition}
\newtheorem{definition}[theorem]{Definition}
\numberwithin{equation}{section}
\let\al=\alpha
\let\b=\beta
\let\g=\gamma
\let\d=\delta
\let\la=\lambda
\let\s=\sigma
\let\G= \Gamma
\let\La=\Lambda
\let\wt=\widetilde
\let\va=\varphi
\let\fy=\infty
\def\bbR{\mathbb{R}}
\def\bbZ{\mathbb{Z}}
\def\bbJ{\mathbb{J}}
\def\calA{\mathcal {A}}
\def\calB{\mathcal {B}}
\def\calM{\mathcal {M}}
\def\calS{\mathcal {S}}
\def\scrF{\mathscr{F}}
\def\scrS{\mathscr{S}}
\def\scrL{\mathscr{L}}
\newcommand{\be}{\begin{equation*}}
\newcommand{\ee}{\end{equation*}}
\newcommand{\ben}{\begin{equation}}
\newcommand{\een}{\end{equation}}
\newcommand{\bn}{\begin{enumerate}}
\newcommand{\en}{\end{enumerate}}
\newcommand{\lan}{\langle}
\newcommand{\ran}{\rangle}
\def\mpqd{M^{p,q}(\rd)}
\def\mpq{M^{p,q}}
\def\mtmd{M^{p,q}(\rd)\longrightarrow M^{p,q}(\rd)}
\def\mtm{M^{p,q}(\mathbb{R})\longrightarrow M^{p,q}(\mathbb{R})}
\def\lpd{L^{p}(\rd)}
\def\lpqsd{L^{p,q}(\bbR^{2d})}
\def\fj{{\mathscr{F}_j}}
\def\fJ{{\mathscr{F}_{\mathbb{J}}}}
\def\fJmpq{{\mathscr{F}_{\mathbb{J}}M^{p,q}}}
\def\fJmpqd{{\mathscr{F}_{\mathbb{J}}M^{p,q}(\mathbb{R}^d)}}
\def\lqp{L^{q,p}}
\def\rr{{\mathbb R}}
\def\rd{{{\rr}^d}}
\def\rn{{\rr}^n}
\def\rdd{{{\rr}^{2d}}}
\def\bx{{\mathbf{x}}}
\def\bxi{{\bm{\xi}}}
\def\by{{\mathbf{y}}}
\def\zd{{{\mathbb{Z}}^d}}
\begin{document}
\title[Matrix dilation and Hausdorff operators on modulation spaces]
{Matrix dilation and Hausdorff operators on modulation spaces}
\author{WEICHAO GUO}
\address{School of Science, Jimei University, Xiamen, 361021, P.R.China}
\email{weichaoguomath@gmail.com}
\author{JIANGKUN LUO}
\address{School of Science, Jimei University, Xiamen, 361021, P.R.China}
\email{ljkmath@163.com}
\author{GUOPING ZHAO}
\address{School of Applied Mathematics, Xiamen University of Technology,
Xiamen, 361024, P.R.China} \email{guopingzhaomath@gmail.com}
\subjclass[2000]{42B15, 42B35.}
\keywords{dilation operator, Hausdorff operator, modulation spaces. }

\begin{abstract}
In this paper, we establish the asymptotic estimates for the norms of the matrix dilation operators on modulation spaces.
As an application, we study the boundedness on modulation spaces of Hausdorff operators.
The definition of Hausdorff operators are also revisited for fitting our study.
\end{abstract}

\maketitle


\section{INTRODUCTION}

Different function spaces are used in different ways to measure the
content of a distribution.
In order to measure both the time and frequency contents,
modulation space was first introduced by H. Feichtinger \cite{Feichtinger1983TRUoV} in 1983.
Now, the modulation space has been studied extensively.
It has turned out to be the appropriate function spaces in the field of time-frequency analysis,
see Gr\"{o}chenig's book \cite{GrochenigBook2013}.
More precisely, modulation spaces
are defined by measuring the decay and integrability of the STFT as following:
\be
M^{p,q}(\rd)=\{f\in \calS'(\rd): V_gf\in L^{p,q}(\rdd) \}
\ee
endowed with the obvious (quasi-)norm, where $L^{p,q}(\rdd)$ are mixed-norm Lebesgue spaces,
STFT denotes the short-time Fourier transform,
more details can be found in Section 2. By $\calM^{p,q}(\rd)$ we denote the $\calS(\rd)$ closure in $M^{p,q}(\rd)$.

In this paper, we focus on the estimates for the norm of the dilation operators on modulation spaces.
As a fundamental property of function spaces, this topic has been well studied.
Before we embark on this topic, let us first clarify some definitions and notations.

For a non-degenerate matrix $A\in GL(d,\rr)$, the $A$-dilation operator $D_A$ is defined by
\be
D_A: f(x):\longrightarrow f(Ax),\ \ \ x\in \rd.
\ee
When $A=\la I$ with $\la>0$ and the $d\times d$ identity matrix $I$, we use the denotation
\be
D_{\la}:=D_{\la I}.
\ee

For $1\leq p,q\leq \infty$, we define
$\mu_1(p,q)=-1/p\vee (1/q-1)\vee (-2/p+1/q)$,
$\mu_2(p,q)=-1/p\wedge (1/q-1)\wedge (-2/p+1/q)$, that is,
\begin{equation*}
  \mu_1(p,q)=
  \begin{cases}
  -1/p,  &\text{if } (1/p,1/q)\in \calA_1:\, 1/q\leq (1-1/p)\wedge 1/p,\\
  1/q-1, &\text{if } (1/p,1/q)\in \calA_2:\, 1/p\geqslant (1-1/q)\vee 1/2,\\
  -2/p+1/q, &\text{if } (1/p,1/q)\in \calA_3:\, 1/p \leq 1/q \wedge 1/2;
  \end{cases}
\end{equation*}
\begin{equation*}
  \mu_2(p,q)=
  \begin{cases}
  -1/p,  &\text{if } (1/p,1/q)\in \calB_1:\, 1/q\geqslant (1-1/p)\vee 1/p,\\
  1/q-1, &\text{if } (1/p,1/q)\in \calB_2:\, 1/p\leq (1-1/q)\wedge 1/2, \\
  -2/p+1/q, &\text{if } (1/p,1/q)\in \calB_3:\, 1/p\geqslant 1/q\vee 1/2.
  \end{cases}
\end{equation*}
See the following figures for the sets $\calA_i$ and $\calB_i$, $i=1,2,3$.

\begin{tikzpicture}[scale=3]
\coordinate (Origin) at (0,0);
\coordinate(horizontal_axis) at (1.2,0);
\coordinate(vertical_axis) at (0,1.2);
\coordinate (m10) at (1,0);
\coordinate (m0505) at (0.5,0.5);
\coordinate (m11) at (1.2,1.2);
\coordinate (m051) at (0.5,1.2);
\draw[thick,->] (Origin) -- (horizontal_axis) node[below]{$\frac{1}{p}$};
\draw[thick,->] (Origin) -- (vertical_axis) node[left]{$\frac{1}{q}$};
\draw [very thick] (m0505)-- (m10) node [below] {$1$};
\draw [very thick] (m0505)--(Origin) node [left] {$(0,0)$};
\draw [very thick] (m0505)-- (m051);
\draw(0.8,0.7)node{$\calA_2$};
\draw(0.5,0.2)node{$\calA_1$};
\draw(0.25,0.5)node{$\calA_3$};
\node at (0.6,-0.25){};
\coordinate (2Origin) at (2,0);
\coordinate(2horizontal_axis) at (3.2,0);
\coordinate(2vertical_axis) at (2,1.2);
\coordinate (2m050) at (2.5,0);
\coordinate (2m0505) at (2.5,0.5);
\coordinate (2m11) at (3.2,1.2);
\coordinate (2m01) at (2,1);
\draw[thick,->] (2Origin) -- (2horizontal_axis) node[below]{$\frac{1}{p}$};
\draw[thick,->] (2Origin) -- (2vertical_axis) node[left]{$\frac{1}{q}$};
\draw [very thick] (2m0505)-- (2m01) node [left] {$1$};
\draw [very thick] (2m11)--(2m0505);
\draw (2Origin) node [left] {$(0,0)$};
\draw [very thick] (2m0505)-- (2m050) node [below] {$1/2$};
\draw(2.7,0.4)node{$\calB_3$};
\draw(2.25,0.3)node{$\calB_2$};
\draw(2.5,0.8)node{$\calB_1$};
\node at (2.6,-0.25){};
\end{tikzpicture}

In order to study the embedding relations between modulation and Besov spaces,
the action of $D_{\la}$ on modulation spaces has been studied carefully, by Sugimoto--Tomita \cite{SugimotoTomita2007JoFA}.
We rewrite the corresponding result in \cite{SugimotoTomita2007JoFA} as follows.
See also \cite{Cordero2009} for the dilation property on Wiener amalgam space.

\hspace{-12pt}\textbf{Theorem A} (cf. \cite[Theorem 3.1]{SugimotoTomita2007JoFA})\quad
Let $1\leq p,q\leq \infty$.
Then the following statements are true:
\bn
\item
There exists a constant $C>0$ such that
\be
\|D_{\la}\|_{\mtmd}\leq C \la^{d\mu_1(p,q)},\ \ \ \ \text{for all}\  \la\in [1,\fy).
\ee
Conversely, if there exists a constant $C>0$ such that
\be
\|D_{\la}\|_{\mtmd}\leq C\la^{\al},\ \ \ \ \text{for all}\  \la\in [1,\fy),
\ee
then $\al\geq d\mu_1(p,q)$.
\item
There exists a constant $C>0$ such that
\be
\|D_{\la}\|_{\mtmd}\leq C \la^{d\mu_2(p,q)},\ \ \ \ \text{for all}\  \la\in (0,1].
\ee
Conversely, if there exists a constant $C>0$ such that
\be
\|D_{\la}\|_{\mtmd}\leq C\la^{\b},\ \ \ \ \text{for all}\  \la\in (0,1],
\ee
then $\b\leq d\mu_2(p,q)$.
\en

Then, the general case, that is, the matrix dilation operators, was studied by Cordero-Nicola \cite{CorderoNicola2007JoFA},
in which they obtain the following results.

\hspace{-12pt}\textbf{Theorem B} (cf. \cite[Theorem 3.4]{CorderoNicola2007JoFA})\quad
Let $1\leq p,q\leq \infty$.
There exists a constant $C$ such that, for every symmetric matrix $A\in GL(d,\rr)$ with eigenvalues $\{\la_j\}_{j=1}^d$,
we have

\be
\|D_{A}\|_{\mtmd}\leq C \prod_{j=1}^d (1\vee\la_j)^{\mu_1(p,q)}(1\wedge\la_j)^{\mu_2(p,q)}.
\ee
Conversely, if there exists a constant $C>0$ such that
\be
\|D_{A}\|_{\mtmd}\leq C\prod_{j=1}^d (1\vee\la_j)^{\al_j}(1\wedge\la_j)^{\b_j}
\ee
with $A=diag(\la_1,\cdots,\la_d)$,
then $\al_j\geq \mu_1(p,q)$ and $\b_j\leq \mu_1(p,q)$.

In \cite{CorderoNicola2007JoFA}, the authors also consider the more general case for the matrix $A\in GL(d,\rr)$ without assuming symmetry.
One can see \cite[Proposition 3.1]{CorderoNicola2007JoFA}), however, this result is not sharp compared with Theorem B.
In fact, even the result in Theorem B leaves some room for improvement.
A simple reason is that the estimates in Theorems A and B only give the optimal exponent in the framework of power functions.
There exist many other possible functions that cannot be determined by the estimations in Theorems A and B.
For instance, let $h(\la)=\la^a\ln^b(e+\la)$ with $a>0$, $b<0$. One can check that $h(\la)\leq C\la^a$ for $\la$ and
\be
h(\la)\leq C \la^{\al} \text{ for } \la\geq1 \text{ implies } \al\geq a.
\ee
To fill this gap, we will give a direct asymptotic estimate for the norm
of matrix dilation operators with general matrix $A\in GL(d,\rr)$.

For simplicity, we denote
\be
\G_{p,q}(\la)=\max\{\la^{-1/p}, \la^{1/q-1}, \la^{-2/p+1/q}\},\ \ \ \la>0.
\ee
Then,
\be
\G_{p,q}(\la)=\la^{\mu_1(p,q)}\ \text{for}\ \la\in [1,\fy),\ \ \
\G_{p,q}(\la)=\la^{\mu_2(p,q)}\ \text{for}\ \la\in (0,1].
\ee
Our first main theorem gives the asymptotic estimate of the matrix dilation $D_A$.

\begin{theorem}\label{thm-DL}
  Let $1\leq p, q \leq \fy$.
  There exist two constants $C_1$ and $C_2$ such that for all non-degenerate matrix $A\in GL(d,\rr)$ with
  singular values $\{\la_j\}_{j=1}^d$,
we have the estimate
  \ben
C_1
\prod_{j=1}^d\G_{p,q}(\la_j)
\leq
\|D_{A}\|_{\mtmd}\leq C_2 \prod_{j=1}^d\G_{p,q}(\la_j).
  \een
\end{theorem}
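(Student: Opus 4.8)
The plan is to reduce the matrix case to a composition of one-parameter dilations via the singular value decomposition $A=U_1 D U_2$ with $U_1,U_2$ orthogonal and $D=\mathrm{diag}(\la_1,\dots,\la_d)$. Since rotations act boundedly on every $M^{p,q}$ with norm independent of the rotation (a metaplectic/symplectic covariance fact for modulation spaces), we have $\|D_A\|_{\mtmd}\asymp \|D_D\|_{\mtmd}$, so it suffices to treat diagonal matrices with positive entries. For the diagonal case the natural tool is a tensor-type argument: writing $\bbR^d=\bbR\times\cdots\times\bbR$ and using that $M^{p,q}(\bbR^d)$ behaves well under tensor products of the one-dimensional time-frequency building blocks, one expects $\|D_D\|_{\mtmd}$ to factor, up to constants, as $\prod_{j=1}^d\|D_{\la_j}\|_{M^{p,q}(\bbR)\to M^{p,q}(\bbR)}$, and then Theorem A gives $\|D_{\la_j}\|\asymp \G_{p,q}(\la_j)$ for each $j$ (using the stated identities $\G_{p,q}(\la)=\la^{\mu_1(p,q)}$ on $[1,\fy)$ and $=\la^{\mu_2(p,q)}$ on $(0,1]$). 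This would yield both bounds simultaneously.

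\textbf{Upper bound.} For the upper estimate I would not rely on an exact tensor factorization (which is delicate because $M^{p,q}$ is not a genuine tensor product of one-dimensional spaces when $p\neq q$), but instead peel off one coordinate at a time. Concretely, write $D=D^{(1)}\cdots D^{(d)}$ where $D^{(j)}$ scales only the $j$-th variable by $\la_j$, and estimate $\|D_D\|\leq\prod_j\|D_{D^{(j)}}\|$. The core lemma is then that a single-variable anisotropic dilation on $M^{p,q}(\bbR^d)$ obeys $\|D_{D^{(j)}}\|_{\mtmd}\leq C\,\G_{p,q}(\la_j)$ with $C$ independent of $d$ and the other $\la_i$. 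This one-variable-on-$\bbR^d$ estimate should be provable by the same STFT computation that underlies Theorem A: compute $V_g(D_{D^{(j)}}f)$ in terms of $V_{\widetilde g}f$ after the change of variables, split the resulting mixed-norm integral into the $j$-th coordinate versus the rest (by Minkowski's integral inequality / Fubini in the appropriate order depending on whether $p\leq q$ or $p\geq q$), and track the Jacobian factors $\la_j^{\pm 1}$ and the dilation of the Gaussian window. The three competing exponents $-1/p$, $1/q-1$, $-2/p+1/q$ arise exactly as in Sugimoto--Tomita from the $L^p$-in-space versus $L^q$-in-frequency scaling and the window-overlap term, and taking the max over the regions $\calA_i$ (for $\la_j\geq1$) and $\calB_i$ (for $\la_j\leq1$) gives $\G_{p,q}(\la_j)$.

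\textbf{Lower bound and the main obstacle.} For the lower bound the standard device is to test on a single Gaussian $f=g=e^{-\pi|x|^2}$: then $D_Af$ is again a Gaussian (with covariance twisted by $A^\top A$, whose eigenvalues are $\la_j^2$), its STFT is explicitly computable, and one reads off $\|D_Af\|_{M^{p,q}}/\|f\|_{M^{p,q}}\geq c\prod_j\G_{p,q}(\la_j)$ in at least part of the parameter range; to cover all of $(1/p,1/q)$ one supplements Gaussians with chirp-modulated Gaussians $e^{-\pi x^\top(I+iB)x}$ or with suitable tensor products of one-dimensional extremizers adapted to each region $\calA_i,\calB_i$, again exploiting orthogonal invariance to reduce to the diagonal case. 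I expect the main obstacle to be the uniformity of constants: making sure the per-coordinate estimates compose with a constant that does not blow up with $d$, and, on the lower-bound side, producing a \emph{single} test function (rather than a region-dependent one) whose STFT norm genuinely sees the product $\prod_j\G_{p,q}(\la_j)$ across the whole range of $(p,q)$ — in the regions where the three exponents in $\G_{p,q}$ change identity, the naive Gaussian may only capture one of them, so some care (or a clever interpolation/duality argument between the cases $p\le q$ and $p\ge q$, and between $A$ and $A^{-1}$) is needed to get the sharp product lower bound.
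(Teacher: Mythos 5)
Your overall skeleton (singular value decomposition $A=P\La Q$, invariance of $M^{p,q}$ under orthogonal dilations, reduction to the diagonal case, and Theorem A for the one-dimensional building block) is exactly the paper's route, and your tensor-product idea for the lower bound is also how the paper transfers the one-dimensional lower bounds to the diagonal case (the $M^{p,q}$ norm factors \emph{exactly} on tensor products, so this part is unproblematic). However, both of your core steps have genuine gaps. For the upper bound, the "core lemma" you assert --- that a single-coordinate dilation on $\bbR^d$ satisfies $\|D_{D^{(j)}}\|_{\mtmd}\leq C\,\G_{p,q}(\la_j)$ --- is precisely the hard part, and the method you propose (direct STFT change of variables plus Minkowski/Fubini) does not deliver it at the corner exponents. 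The direct computation with a dilated window gives only $|\det\La|^{-1/p+1/q}\|D_{\La}\Phi\|_{M^{1,1}}$, which fails to be sharp exactly at $(p,q)=(2,1)$ with $\la_j\geq1$ and $(p,q)=(2,\fy)$ with $\la_j\leq1$, where $\G_{p,q}(\la_j)=1$ resp.\ $\la_j^{-1}$ but the naive bound produces an extra power of $\la_j$. The paper has to prove these two endpoint cases by a separate frequency-uniform-decomposition counting argument (Lemma \ref{lm-diagonalc}) and then recover the full range by complex interpolation against the trivial endpoints $p=1,\fy$, $q=1,\fy$ and $(2,2)$; without some substitute for that step your upper bound is not established.

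For the lower bound, you correctly sense the obstacle but do not resolve it: Gaussians and chirped Gaussians cannot see the exponent $-2/p+1/q$. The paper captures the three competing exponents with three different test families: a band-limited bump (giving $\la^{-d/p}$ via the local equivalence $M^{p,q}\sim L^p$ for band-limited functions), a compactly supported bump (giving $\la^{d(1/q-1)}$ via $M^{p,q}\sim \scrF L^q$ for compactly supported functions), and --- crucially --- the function $F_{\la,L}=\sum_{|k|_\fy\leq\la^{-1}}T_{Lk}M_k h$, a widely separated lattice of $\sim\la^{-d}$ modulated bumps, whose $M^{p,q}$ norm is $\sim\la^{-d/q}$ before dilation while $\|D_\la F_{\la,L}\|_{M^{p,q}}\sim\la^{-2d/p}\,$ as $L\to\fy$ by almost orthogonality; this is what produces $\la^{d(-2/p+1/q)}$. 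Each family then covers both $\la\leq1$ and $\la\geq1$ by the elementary trick $\|D_\la\|\geq\|g\|/\|D_{1/\la}g\|$. No single test function, and no duality between $A$ and $A^{-1}$ alone, replaces this third construction, so as written your lower bound is incomplete on the region where $-2/p+1/q$ is the maximal exponent.
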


For the upper bound estimate,
our theorem is an extension of Theorems A and B to the more general matrix dilation operators.
For the lower bound estimate, our theorem is an essential improvement of Theorems A and B, even for the
scalar matrix dilation operator $D_{\la}$.

After establishing Theorem \ref{thm-DL},
a natural idea is to apply it to the study of Hausdorff operators.

For a suitable function $\Phi$, the corresponding Hausdorff operator $H_{\Phi}$ can be formally defined by
\begin{equation}
  H_{\Phi}f(x):=\int_{\rn}\Phi(y)D_{1/|y|}f(x)dy.
\end{equation}

The study of Hausdorff operators was originated from some classical summation methods.
Today, it has attracted more and more attention of many researchers.
One can see
\cite{Chen2013} and \cite{Liflyand2011} for a survey with some historical background and recent developments
regarding Hausdorff operators.

When $\Phi$ is taken suitably, Hausdorff operator contains some important operators in the field of harmonic analysis.
For instance, the Hardy operator, adjoint Hardy operator (see \cite{ChenFanLi2012CAMSB, ChenFanZhang2012AMSES, FanLin2014AB}),
and the Ces\`{a}ro operator \cite{Miyachi2004JFAA, Siskakis1990PAMS} in one dimension.

For a matrix-valued function $A(y)\in GL(d,\rr)$ for $y\in \rn$,
a more general Hausdorff operator is defined by
\be
H_{\Phi,A}f(x): =\int_{\rn}\Phi(y)(D_{A(y)}f)(x)dy.
\ee
This general Hausdorff operator was introduced by Brown-M\'{o}ricz\cite{BrownMoricz2002JMAA}
and Lerner--Liflyand \cite{LernerLiflyand2007JAMS}.
One can check that $H_{\Phi}=H_{\Phi,A}$ with $A(y)=diag(1/|y|,\cdots,1/|y|)$.
In this sense, $H_{\Phi}$ is a special case of the general Hausdorff operaotr $H_{\Phi,A}$.

By the definition of Hausdorff operator, one can see that the dilation property of the function space has closed relations with
the boundedness on the corresponding function space of Hausdorff operator. With the help of Theorem A, we have
studied the action of $H_{\Phi}$ on modulation spaces in \cite{ZhaoFanGuo2018AFA}.
In this paper, our second main goal is to study the boundedness on modulation spaces of $H_{\Phi,A}$.
In particular, we establish the sharp conditions for the boundedness of $H_{\Phi,A}$ on $M^{p,q}$.
Our second main theorem is as follows. This theorem generalizes the main results in \cite{ZhaoFanGuo2018AFA}.
\begin{theorem}\label{thm-bdh}
Let $1\leq p, q \leq \fy$.
Assume that $A(y)\in GL(d,\rr)$ is non-degenerate matrix for all $y\in\rn$, with
  singular values $\{\la_j(y)\}_{j=1}^d$.
If the following condition holds
  \ben\label{thm-bdh-cd}
    \int_{\rn}|\Phi(y)|\cdot
       \prod_{j=1}^d \G_{p,q}(\lambda_j(y)) dy
    <
    \infty,
  \een
we have the boundedness of
\ben\label{thm-bdh-bd}
H_{\Phi, A}: \calM^{p,q}(\rd) \longrightarrow \mpqd.
\een
In addition, if $\Phi\geq0$, $(1/p-1/2)(1/q-1/p)\geqslant 0$, and $A(y)=\La(y)=\text{diag}\{\lambda_1(y),\cdots,\lambda_d(y) \}$,
then the converse direction is valid, that is, we have the equivalent relation
$\eqref{thm-bdh-bd}\Longleftrightarrow \eqref{thm-bdh-cd}$.
Here the boundedness \eqref{thm-bdh-bd} means $\|H_{\Phi, A}\|_{\mpqd}\leq C\|f\|_{\mpqd}$ for all $f\in\scrS(\rd)$. 
Then the boundedness can be extended to
$\calM^{p,q}(\rd)$.
\end{theorem}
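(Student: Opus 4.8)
The plan is to prove the two implications separately. The direction $\eqref{thm-bdh-cd}\Rightarrow\eqref{thm-bdh-bd}$ is a quick consequence of Theorem~\ref{thm-DL}; the converse, which holds only under the sign condition and the diagonality of $A$, is the substantial part. For the sufficiency I would first establish the a priori bound on $\calS(\rd)$: fixing a window $g\in\calS(\rd)$, Fubini's theorem (legitimate since $f$ is Schwartz and, by \eqref{thm-bdh-cd}, $\Phi$ is integrable against the relevant weight) gives $V_g(H_{\Phi,A}f)(x,\xi)=\int_{\rn}\Phi(y)V_g(D_{A(y)}f)(x,\xi)\,dy$, and Minkowski's integral inequality in $L^{p,q}(\rdd)$—valid because $1\le p,q\le\fy$—yields $\|H_{\Phi,A}f\|_{\mpq}\le\int_{\rn}|\Phi(y)|\,\|D_{A(y)}f\|_{\mpq}\,dy$. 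The upper estimate of Theorem~\ref{thm-DL} bounds $\|D_{A(y)}f\|_{\mpq}$ by $C_2\big(\prod_{j=1}^d\G_{p,q}(\lambda_j(y))\big)\|f\|_{\mpq}$, and combining with \eqref{thm-bdh-cd} gives $\|H_{\Phi,A}f\|_{\mpq}\le C\|f\|_{\mpq}$. Since $\calS(\rd)$ is dense in $\calM^{p,q}(\rd)$ and $\mpqd$ is complete, $H_{\Phi,A}$ extends uniquely to a bounded operator $\calM^{p,q}(\rd)\to\mpqd$.

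For the necessity, assume $\Phi\ge0$, $(1/p-1/2)(1/q-1/p)\ge0$, $A(y)=\La(y)=\mathrm{diag}\{\lambda_1(y),\dots,\lambda_d(y)\}$, and that \eqref{thm-bdh-bd} holds. The sign condition confines $(1/p,1/q)$ to the union of $\{p\le2,\ q\le p\}$ and $\{p\ge2,\ q\ge p\}$, and these two regions are exchanged by $(p,q)\mapsto(p',q')$. A direct computation shows that the adjoint of $H_{\Phi,\La}$ is $H_{\widetilde\Phi,\widetilde\La}$ with $\widetilde\La(y)=\La(y)^{-1}$ (singular values $1/\lambda_j(y)$) and $\widetilde\Phi(y)=\Phi(y)\prod_j\lambda_j(y)^{-1}\ge0$; moreover the sign condition is preserved and $\lambda^{-1}\G_{p',q'}(1/\lambda)=\G_{p,q}(\lambda)$, so \eqref{thm-bdh-cd} for $(\Phi,\La,p,q)$ is equivalent to the same condition for $(\widetilde\Phi,\widetilde\La,p',q')$. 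Hence it suffices to treat $1\le q\le p\le2$, where $\G_{p,q}(\lambda)=\lambda^{1/q-1}$ for $\lambda\ge1$ and $\G_{p,q}(\lambda)=\lambda^{-1/p}$ for $\lambda\le1$—and, crucially, never the genuinely two-parameter exponent $-2/p+1/q$.

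The core is then as follows. Decompose $\rn$ into the level sets $E_{\vec m}=\{y:\lambda_j(y)\asymp 2^{m_j},\ j=1,\dots,d\}$, $\vec m\in\zd$, on which $D_{\La(y)}$ acts essentially like $D_{2^{-\vec m}}$; because $\Phi\ge0$ there is no cancellation in $\int_{E_{\vec m}}\Phi\,D_{\La(y)}\,dy$, so this piece behaves like $\big(\int_{E_{\vec m}}\Phi\big)D_{2^{-\vec m}}$. Testing $H_{\Phi,\La}$ on functions $f=\sum_{\vec m}\alpha_{\vec m}f_{\vec m}$, where $\widehat{f_{\vec m}}$ is a smooth bump on the dyadic shell of size $2^{\vec m}$ adapted to the dilation extremizers, one finds—thanks to the sign condition, which makes the spatial factor of the $M^{p,q}$-norm scale-invariant and therefore inert—that $\|f\|_{\mpq}$ and $\|H_{\Phi,\La}f\|_{\mpq}$ are comparable to the same mixed sequence norm applied, respectively, to $(\alpha_{\vec m})$ and to a discrete convolution $\mu*\alpha$, with $\mu_{\vec m}\asymp\big(\int_{E_{\vec m}}\Phi\big)\prod_j\G_{p,q}(2^{m_j})$, so that $\sum_{\vec m}\mu_{\vec m}\asymp\int_{\rn}\Phi(y)\prod_j\G_{p,q}(\lambda_j(y))\,dy$. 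Since $\mu\ge0$, boundedness of $\alpha\mapsto\mu*\alpha$ forces $\sum_{\vec m}\mu_{\vec m}<\fy$ (test on indicators of large boxes), which is \eqref{thm-bdh-cd}. For the shells with all $m_j\le0$ one may instead use the slice $\xi=0$ of $V_g(H_{\Phi,\La}g)$ with $g$ a Gaussian, which is a nonnegative superposition, together with the embedding $\mpq\hookrightarrow M^{p,\fy}$ and $\|V_{g_0}(D_\lambda g_0)(\cdot,0)\|_{L^p}\asymp\lambda^{-1/p}$ ($\lambda\le1$).

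The hard part is precisely this reduction: one must choose the frequency blocks $f_{\vec m}$ so that the mixed norm $L^p_xL^q_\xi$ of $\sum_{\vec m}\alpha_{\vec m}f_{\vec m}$, and of its image under $H_{\Phi,\La}$, genuinely decouples into a scale-invariant spatial factor and a frequency factor carrying the convolution, and so that distinct shells do not interact in an uncontrolled way—here the lacunarity of the shells and the tensor structure coming from diagonality of $\La$ are essential. It is exactly in the region $(1/p-1/2)(1/q-1/p)\ge0$ that $\G_{p,q}$ avoids the exponent $-2/p+1/q$ and this decoupling succeeds; outside it the decoupling breaks down, which is why the converse is asserted only there. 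When \eqref{thm-bdh-cd} fails, $\sum_{\vec m}\mu_{\vec m}=\infty$, so the convolution—hence $H_{\Phi,\La}$—is unbounded already on $\calS(\rd)$, contradicting \eqref{thm-bdh-bd}; as in the sufficiency, the a priori estimate on $\calS(\rd)$ is all that is needed, since boundedness there is what \eqref{thm-bdh-bd} means and it extends to $\calM^{p,q}(\rd)$ by density.
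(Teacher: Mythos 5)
Your sufficiency argument is exactly the paper's: interchange $V_\varphi$ with the $y$-integral, apply Minkowski's inequality in $L^{p,q}(\rdd)$, invoke the upper bound of Theorem \ref{thm-DL}, and extend by density from $\scrS(\rd)$ to $\calM^{p,q}(\rd)$. The duality reduction of the necessity to the region $1/2\le 1/p\le 1/q$ via $H_{\Phi,\La}^*=H_{\wt\Phi,\La^{-1}}$ also matches the paper (Proposition \ref{R. Adj. Hausdorff} and Case 2 of the proof).

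The necessity itself, however, has a genuine gap, and you have flagged it yourself: the entire argument rests on the assertion that for lacunary sums $f=\sum_{\vec m}\alpha_{\vec m}f_{\vec m}$ both $\|f\|_{\mpq}$ and $\|H_{\Phi,\La}f\|_{\mpq}$ ``decouple into a scale-invariant spatial factor and a frequency factor carrying the convolution,'' and no mechanism is given for this. Two concrete obstructions. First, positivity of $\Phi$ does not directly produce a \emph{lower} bound for $\|\int\Phi(y)D_{\La(y)}f\,dy\|_{\mpq}$: the $M^{p,q}$-norm is computed through the STFT (or through $\|\Box_k\cdot\|_{L^p}$), which is oscillatory, so the superposition over $y\in E_{\vec m}$ can still lose mass in the norm even though the integrand is a nonnegative combination of dilates. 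The paper's way around this is to leave the modulation scale entirely: it introduces the partial Fourier modulation spaces $\fJmpqd$, proves the commutation $\fJ H_{\Phi,\La}f=H_{\Phi_\bbJ,\La_\bbJ}\fJ f$ (Proposition \ref{pp-Fourier-Hausdorff}) and the embedding $\fJmpqd\hookrightarrow L^{q,p}(\bbR^J\times\bbR^{d-J})$ (Proposition \ref{pp-embed-fjmpq-lpq}, which is where the restriction $1/2\le1/p\le1/q$ enters through Hausdorff--Young), and only then tests on nonnegative power-weight bumps $(|\cdot|^{-1/q}\chi_{[2^{-1},2^{M+1}]})*\eta$, where pointwise positivity does survive and yields Proposition \ref{pp-FJM-Lqp}. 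Second, the necessary condition \eqref{thm-bdh-cd} for diagonal $\La(y)$ with mixed singular values amounts to the $2^d$ conditions $\int\Phi(y)\prod_{j\in\bbJ}\la_j(y)^{1/q-1}\prod_{j\notin\bbJ}\la_j(y)^{-1/p}dy<\infty$, one for each sign pattern of $(\log\la_j(y))_j$; your shells ``adapted to the dilation extremizers'' would have to interpolate between compactly supported coordinates (extremal for $\la^{1/q-1}$) and band-limited coordinates (extremal for $\la^{-1/p}$) independently in each variable, and you neither construct these nor control the cross-shell interaction inside the inner $L^p_x$-norm for $p\ne2$. Until the claimed decoupling is actually proved (or replaced by an embedding into a positive-kernel-friendly space, as in the paper), the converse direction is not established.
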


Our paper is organized as follows.
In Section 2, we collect some notations and basic properties of modulation spaces.
Section 3 is devoted to the estimates of matrix dilation operators.
The proof of Theorem \ref{thm-DL} will be given in this section.
In Section 4, we first revisit the definition of $H_{\Phi,A}$, giving a reasonable condition on $\Phi$ to
ensure that the action on modulation of $H_{\Phi,A}$ can be well defined.
Then, by a new embedding relation between partial Fourier modulation and mixed-norm spaces,
and a lower estimate of $H_{\Phi,A}f$ in the corresponding mixed-norm spaces, we give the proof of Theorem \ref{thm-bdh}.

Throughout this paper, we will adopt the following notations.
We use $X\lesssim Y$ to denote the statement that $X\leqslant CY$, with a positive constant $C$ that may depend on $p, q, d$,
but it might be different from line to line.
The notation $X\sim Y$ means the statement $X\lesssim Y\lesssim X$.
For a multi-index $k=(k_{1},k_{2},...,k_{n})\in \mathbb{Z}^{n}$,
we denote $|k|_{\infty }:=\max\limits_{i=1,2,...,n}|k_{i}|$ and $\langle k\rangle:=(1+|k|^{2})^{{1}/{2}}.$
We use $\scrL$ to denote a large number might change from line to line.

\section{PRELIMINARIES}
For any fixed $x, \xi\in \rd$, the translation operator $T_x$ and modulation operator $M_{\xi}$ are defined, respectively, by
\be
T_xf(t)=f(t-x),\ \ \ \ M_{\xi}f(t)=e^{2\pi it\cdot\xi}f(t).
\ee

The short-time Fourier transform (STFT) of a function $f$ with respect to a window $g$ is defined by
\be
V_gf(x,\xi):=\int_{\rd}f(t)\overline{g(t-x)}e^{-2\pi it\cdot \xi}dt,\ \ \  f,g\in L^2(\rd).
\ee
Its extension to $\calS'\times \calS$ can be denoted by
\be
V_gf(x,\xi)=\langle f, M_{\xi}T_xg\rangle,
\ee
in which the STFT $V_gf$ is a bilinear map from $\calS'(\rd)\times \calS(\rd)$ into $\calS'(\rdd)$.
The so-called fundamental identity of time-frequency analysis is as follows:
\ben\label{itf}
V_gf(x,\xi)=e^{-2\pi ix\cdot \xi}V_{\hat{g}}\hat{f}(\xi,-x),\ \ \ (x,\xi)\in \rdd.
\een

The weighted mixed-norm spaces used to measure the STFT are defined as follows.

\begin{definition}[Mixed-norm spaces.]
Let $p,q\in (0,\fy]$, and $d_1, d_2\in\bbZ^+$. 
Then the mixed-norm space $L^{p,q}(\bbR^{d_1}\times\bbR^{d_2})$
consists of all Lebesgue measurable functions on $\bbR^{d_1}\times\bbR^{d_2}$ such that the (quasi-)norm
\be
\|F\|_{L^{p,q}(\bbR^{d_1}\times\bbR^{d_2})}
=
\left(\int_{\bbR^{d_2}}\left(\int_{\bbR^{d_1}}|F(x,\xi)|^pdx\right)^{q/p}d\xi\right)^{1/q}
\ee
is finite, with usual modification when $p=\fy$ or $q=\fy$.
For the convenience of writing, we denote $L^{p,q}(\rdd):=L^{p,q}(\bbR^{d}\times\bbR^{d})$.
\end{definition}
\begin{lemma}[Young's inequality of mixed-norm spaces, \cite{GrochenigBook2013}]\label{lm-Young-mixed-norm}
	If $F\in L^{1,1}(\bbR^{2d})$, and $G\in \lpqsd$, then
	\[
	  \|F \ast G\|_{\lpqsd}\lesssim \|F\|_{L^{1,1}(\bbR^{2d})} \|G\|_{\lpqsd}.
	\]
\end{lemma}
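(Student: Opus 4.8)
The plan is to treat the full $2d$-dimensional convolution as a continuous superposition of joint translates of $G$, and to combine the translation invariance of the mixed (quasi-)norm with the generalized Minkowski inequality. Writing the convolution over $\bbR^{2d}$ as
\ben
(F\ast G)(x,\xi)=\int_{\bbR^{2d}}F(u,v)\,G(x-u,\xi-v)\,du\,dv,
\een
I would view the right-hand side as an $\lpqsd$-valued integral in the parameter $(u,v)$, with integrand $F(u,v)\,\tau_{(u,v)}G$, where $\tau_{(u,v)}G:=G(\cdot-u,\cdot-v)$ denotes the joint translate in both groups of variables.

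The key structural observation, which I would record first, is that $\lpqsd$ is translation invariant separately in each of its two variable groups. Indeed, for fixed $(u,v)$ the inner integral $\int_{\bbR^d}|G(x-u,\xi-v)|^p\,dx$ is independent of $u$ by translation invariance of $L^p(\bbR^d)$, and the subsequent integral over $\xi$ is independent of $v$ by translation invariance of $L^q(\bbR^d)$; hence $\|\tau_{(u,v)}G\|_{\lpqsd}=\|G\|_{\lpqsd}$ for every $(u,v)$.

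Applying the generalized Minkowski inequality (legitimate since $1\le p,q\le\fy$, so that the norm of an integral is dominated by the integral of the norm) to pull the $L^{p,q}$ norm inside the $(u,v)$-integral, I obtain
\ben
\|F\ast G\|_{\lpqsd}
\le
\int_{\bbR^{2d}}|F(u,v)|\,\|\tau_{(u,v)}G\|_{\lpqsd}\,du\,dv
=\|G\|_{\lpqsd}\int_{\bbR^{2d}}|F(u,v)|\,du\,dv.
\een
The remaining integral is precisely $\|F\|_{L^{1,1}(\bbR^{2d})}$, since for $p=q=1$ the mixed norm collapses to the ordinary $L^1(\bbR^{2d})$ norm by Fubini--Tonelli. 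This yields the claimed estimate $\|F\ast G\|_{\lpqsd}\lesssim\|F\|_{L^{1,1}(\bbR^{2d})}\|G\|_{\lpqsd}$, in fact with implied constant $1$.

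I expect the only real care to lie in justifying the interchange of norm and integral rather than in any delicate estimate. Concretely, I would apply the integral form of Minkowski's inequality twice, first in the inner $L^p$-variable and then in the outer $L^q$-variable, so that the two-step iteration matches the nested structure of the mixed norm; and I would invoke Fubini--Tonelli (valid because $F\in L^1$ and the relevant integrands are nonnegative) to guarantee that $F\ast G$ is well defined almost everywhere and that the integrals may be freely rearranged. Beyond this bookkeeping no further input is needed.
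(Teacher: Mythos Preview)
Your argument is correct and is the standard proof: Minkowski's integral inequality combined with the translation invariance of the mixed norm, exactly as in Gr\"ochenig's book. The paper itself does not supply a proof of this lemma; it simply states the result and cites \cite{GrochenigBook2013}, so there is nothing further to compare.
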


Now, we introduce the definition of modulation space.
\begin{definition}\label{df-M}
Let $0<p,q\leq \infty$.
Given a non-zero window function $\phi\in \calS(\rd)$, the (weighted) modulation space $M^{p,q}_m(\rd)$ consists
of all $f\in \calS'(\rd)$ such that the norm
\be
\begin{split}
\|f\|_{M^{p,q}(\rd)}&:=\|V_{\phi}f\|_{L^{p,q}(\rdd)}
=\left(\int_{\rd}\left(\int_{\rd}|V_{\phi}f(x,\xi)|^{p} dx\right)^{{q}/{p}}d\xi\right)^{{1}/{q}}
\end{split}
\ee
is finite, with usual modification when $p=\infty$ or $q=\infty$.
\end{definition}
Note that the above definition of $M^{p,q}$ is independent of the choice of window function $\phi$
in the sense of equivalent norms.

Applying the frequency-uniform localization techniques, one can give an alternative definition of modulation spaces (see \cite{Triebel1983ZFA} for details).

We denote by $Q_{k}$ the unit cube with the center at $k$. Then the family $\{Q_{k}\}_{k\in\mathbb{Z}^{d}}$
constitutes a decomposition of $\mathbb{R}^{d}$.
Let $\rho \in \mathscr {S}(\mathbb{R}^{d}),$
$\rho: \rd \rightarrow [0,1]$ be a smooth function satisfying that $\rho(\xi)=1$ for
$|\xi|_{\infty}\leq {1}/{2}$ and $\rho(\xi)=0$ for $|\xi|\geq 3/4$. Let
\begin{equation}
\rho_{k}(\xi)=\rho(\xi-k),  k\in \zd
\end{equation}
be a translation of \ $\rho$.
Since $\rho_{k}(\xi)=1$ in $Q_{k}$, we have that $\sum_{k\in\zd}\rho_{k}(\xi)\geq1$
for all $\xi\in\rd$. Denote
\begin{equation}
\sigma_{k}(\xi)=\rho_{k}(\xi)\bigg(\sum_{l\in\zd}\rho_{l}(\xi)\bigg)^{-1},  ~~~~ k\in\zd.
\end{equation}
It is easy to know that $\{\sigma_{k}\}_{k\in\zd}$
constitutes a smooth decomposition of $\rd$, and $%
\sigma_{k}(\xi)=\sigma_0(\xi-k)=\sigma(\xi-k)$. The frequency-uniform decomposition
operators can be defined by
\begin{equation}
\Box_{k}:= \mathscr{F}^{-1}\sigma_{k}\mathscr{F}
\end{equation}
for $k\in \zd$.
Now, we give the (discrete) definition of modulation space $\mpqd$.

\begin{definition}\label{df-Mc}
Let $0<p,q\leq \infty$. The modulation space $\mpqd$ consists of all $f\in \mathscr{S}'(\rd)$ such that the (quasi-)norm
\begin{equation}
\|f\|_{\mpqd}:=\bigg( \sum_{k\in \mathbb{Z}^d}\langle k\rangle ^{sq}\|\Box_k f\|_{L^p}^{q}\bigg)^{1/q}
\end{equation}
is finite, with usual modification when $q=\infty$.
\end{definition}

We remark that the above definition is independent of the choice of decomposition function (see \cite{Wang2011Book}).
So we can use appropriate $\sigma$ according to the problem we deal with.
We also recall that the definitions \ref{df-M} and \ref{df-Mc} are equivalent.

The following two lemmas establish two equivalent relations between Lebesgue and modulation spaces from two local perspectives.

\begin{lemma}(See \cite[Lemma 3.2]{Cordero2009})\label{lm-lp-1}
	Let $1\leq p,q\leq\infty$ and $R>0$.
	Suppose $\text{supp}\scrF f\subset B(0,R)$, then
  \be
  \|f\|_{M^{p,q}(\rd)}\sim_R \|f\|_{L^p(\rd)}.
  \ee
\end{lemma}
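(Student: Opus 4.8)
The plan is to reduce everything to the discrete description of $M^{p,q}$ in Definition \ref{df-Mc}: once $\mathscr{F}f$ is supported in $B(0,R)$, only finitely many of the pieces $\Box_k f$ survive, and the asserted equivalence becomes a finite-dimensional bookkeeping argument combined with Young's convolution inequality.

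First I would localize in frequency. Since $\rho(\xi)=0$ for $|\xi|\ge 3/4$, we have $\mathrm{supp}\,\rho_k\subset\overline{B(k,3/4)}$, hence $\mathrm{supp}\,\sigma_k\subset\overline{B(k,3/4)}$. Therefore $\sigma_k\mathscr{F}f\equiv 0$ unless $\overline{B(k,3/4)}\cap B(0,R)\ne\emptyset$, i.e.\ unless $k$ lies in the finite index set $F_R:=\{k\in\zd:\ |k|<R+1\}$, whose cardinality $N_R:=\#F_R$ depends only on $R$ and $d$. Consequently $\Box_k f=0$ for $k\notin F_R$. Moreover, since $\sum_{k\in\zd}\sigma_k\equiv 1$ on $\rd$ and the omitted terms vanish on $B(0,R)$, we get $\sum_{k\in F_R}\sigma_k\equiv 1$ on $\mathrm{supp}\,\mathscr{F}f$, which yields the reproducing identity $f=\sum_{k\in F_R}\Box_k f$.

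For the upper bound I would estimate each surviving piece by Young's inequality: writing $\Box_k f=(\mathscr{F}^{-1}\sigma_k)\ast f$, we have $\|\Box_k f\|_{L^p}\le\|\mathscr{F}^{-1}\sigma_k\|_{L^1}\|f\|_{L^p}$, and since $\sigma_k(\xi)=\sigma(\xi-k)$ the function $\mathscr{F}^{-1}\sigma_k=M_k\mathscr{F}^{-1}\sigma$ is merely a modulation of the Schwartz function $\mathscr{F}^{-1}\sigma$, so $\|\mathscr{F}^{-1}\sigma_k\|_{L^1}=\|\mathscr{F}^{-1}\sigma\|_{L^1}=:c_0<\infty$ is independent of $k$. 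Summing the $q$-th powers over the $N_R$ indices of $F_R$ (with the obvious sup-modification when $q=\infty$) gives $\|f\|_{M^{p,q}}\le N_R^{1/q}c_0\,\|f\|_{L^p}\lesssim_R\|f\|_{L^p}$. For the lower bound I would use the reproducing identity together with the triangle inequality and Hölder's inequality on the finite sum: $\|f\|_{L^p}\le\sum_{k\in F_R}\|\Box_k f\|_{L^p}\le N_R^{1/q'}\big(\sum_{k\in F_R}\|\Box_k f\|_{L^p}^q\big)^{1/q}=N_R^{1/q'}\|f\|_{M^{p,q}}\lesssim_R\|f\|_{M^{p,q}}$, where $q'$ is the conjugate exponent of $q$ (again with the usual modification at the endpoints). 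Combining the two displays gives $\|f\|_{M^{p,q}}\sim_R\|f\|_{L^p}$.

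The argument is routine; the only points requiring (mild) care are the support bookkeeping that pins down the index set $F_R$, the translation-invariance remark that turns $\|\mathscr{F}^{-1}\sigma_k\|_{L^1}$ into the $k$-independent constant $c_0$, and a uniform treatment of the endpoint exponents $p,q\in\{1,\infty\}$ in the two finite-sum estimates. No genuine obstacle is expected. (One could equally run the whole argument through Definition \ref{df-M} and the identity $V_\phi f(x,\xi)=\langle f,M_\xi T_x\phi\rangle$, but the discrete route above is shorter.)
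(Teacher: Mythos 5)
Your proof is correct. Note that the paper itself does not prove this lemma; it is quoted from Cordero--Nicola \cite[Lemma 3.2]{Cordero2009}, where the argument runs through the continuous (STFT) definition: for a window with compactly supported Fourier transform, the fundamental identity \eqref{itf} shows that $V_\phi f(x,\xi)$ vanishes for $\xi$ outside a fixed compact set depending on $R$, so the outer $L^q_\xi$-norm collapses and one is left with $\sup_\xi\|f\ast(M_\xi\phi^\ast)\|_{L^p}\sim\|f\|_{L^p}$. You instead use the frequency-uniform (discrete) characterization of $M^{p,q}$ from Definition \ref{df-Mc}: the support hypothesis kills all but the $O_R(1)$ blocks $\Box_k f$ with $|k|\lesssim R$, Young's inequality with the $k$-independent constant $\|\scrF^{-1}\sigma\|_{L^1}$ gives the upper bound, and the reproducing identity $f=\sum_{k\in F_R}\Box_k f$ plus H\"older on the finite sum gives the lower bound. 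Both routes are standard and of comparable length; yours has the mild advantage of staying entirely within the machinery the paper already sets up (the $\sigma_k$ partition and the equivalence of Definitions \ref{df-M} and \ref{df-Mc}) and of handling the endpoints $q\in\{1,\infty\}$ transparently, at the cost of invoking that equivalence as a black box. The only points worth being slightly more explicit about are that the distributional identity $\sum_{k\in F_R}\sigma_k\,\scrF f=\scrF f$ requires the cut-offs to sum to $1$ on a neighborhood of $\operatorname{supp}\scrF f$ (which holds since they sum to $1$ on all of $B(0,R)$), and that the two inequalities together also establish the implicit claim that $f\in L^p$ with band-limited spectrum lies in $M^{p,q}$ and conversely.
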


\begin{lemma}(See \cite[Lemma 3.2]{Cordero2009})\label{lm-lp-2}
	Let $1\leq p,q\leq\infty$ and $R>0$.
	Suppose $\text{supp} f\subset B(0,R)$, then
  \be
  \|f\|_{M^{p,q}(\rd)}\sim_R \|f\|_{\scrF L^q(\rd)}.
  \ee
\end{lemma}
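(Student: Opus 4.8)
The plan is to establish the two-sided estimate directly from the definition of the STFT, exploiting the freedom to choose the window. This lemma is the Fourier dual of Lemma \ref{lm-lp-1}: there the hypothesis $\text{supp}\,\scrF f\subset B(0,R)$ localizes $V_\phi f$ in the frequency variable $\xi$ and produces an $L^p$ norm, whereas here the hypothesis $\text{supp}\,f\subset B(0,R)$ localizes $V_\phi f$ in the space variable $x$ and should produce an $\scrF L^q$ norm. Since $M^{p,q}$ is independent of the window (as noted after Definition \ref{df-M}), I would fix once and for all a window $\phi\in C_c^\infty(\rd)$ with $\text{supp}\,\phi\subset B(0,1)$ and $\int_{\rd}\phi\ne 0$, i.e. $\hat\phi(0)\ne0$. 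Setting $\tilde\phi(t)=\overline{\phi(-t)}$, a direct computation from $V_\phi f(x,\xi)=\int f(t)\overline{\phi(t-x)}e^{-2\pi i t\cdot\xi}\,dt$ gives the exact identity $V_\phi f(\cdot,\xi)=(M_{-\xi}f)\ast\tilde\phi$, whose Fourier transform in $x$ is $\scrF_x\big[V_\phi f(\cdot,\xi)\big](\eta)=\hat f(\eta+\xi)\,\overline{\hat\phi(\eta)}$. Because $f$ is supported in $B(0,R)$ and $\tilde\phi$ in $B(0,1)$, the convolution identity shows $V_\phi f(x,\xi)=0$ for $x\notin B':=B(0,R+1)$, so the inner $x$-integral in $\|f\|_{M^{p,q}}$ runs only over the bounded set $B'$ with $|B'|\sim_R 1$. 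This localization is what drives both bounds.

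For the upper bound $\|f\|_{M^{p,q}}\lesssim_R\|\hat f\|_{L^q}$, I would estimate the inner $L^p_x$ norm over $B'$ by the $L^\infty_x$ norm, paying a factor $|B'|^{1/p}$, and then control the sup via the Fourier-transform identity: $\|V_\phi f(\cdot,\xi)\|_{L^\infty_x}\le\|\scrF_x[V_\phi f(\cdot,\xi)]\|_{L^1}=\int_{\rd}|\hat f(\eta+\xi)|\,|\hat\phi(\eta)|\,d\eta=\big(|\hat f|\ast|\hat\phi(-\cdot)|\big)(\xi)$. Taking the outer $L^q_\xi$ norm and applying the classical Young inequality $L^1\ast L^q\hookrightarrow L^q$ yields $\|f\|_{M^{p,q}}\lesssim_R|B'|^{1/p}\,\|\hat\phi\|_{L^1}\,\|\hat f\|_{L^q}\lesssim_R\|\hat f\|_{L^q}$.

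For the lower bound $\|\hat f\|_{L^q}\lesssim_R\|f\|_{M^{p,q}}$, which I expect to be the more delicate direction, the key is to recover $\hat f$ pointwise from the STFT by evaluating the partial Fourier transform at $\eta=0$: the identity above gives $\int_{\rd}V_\phi f(x,\xi)\,dx=\hat f(\xi)\,\overline{\hat\phi(0)}$, and here the choice $\hat\phi(0)\ne0$ is essential. Hence $|\hat f(\xi)|\le|\hat\phi(0)|^{-1}\int_{B'}|V_\phi f(x,\xi)|\,dx\le|\hat\phi(0)|^{-1}|B'|^{1/p'}\big(\int_{B'}|V_\phi f(x,\xi)|^p\,dx\big)^{1/p}$ by H\"older on the bounded domain $B'$ (with $1/p+1/p'=1$). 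Raising to the $q$-th power and integrating in $\xi$ gives exactly $\|\hat f\|_{L^q}\lesssim_R\|f\|_{M^{p,q}}$, and since $\|\hat f\|_{L^q}=\|f\|_{\scrF L^q}$ the two bounds combine to the claimed equivalence.

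The main obstacle is the lower bound, and it is handled by the nonvanishing-mean condition $\int\phi\ne0$ together with the confinement of the $x$-support to $B'$; without compact support of the window one could not reduce the $x$-integral to a bounded set and apply H\"older. The endpoint cases are covered with no change: for $p=\infty$ the factors $|B'|^{1/p}$ and $|B'|^{1/p'}=|B'|$ are interpreted in the obvious way, and for $q=\infty$ one replaces the outer $L^q_\xi$ integral by a supremum, the Young step remaining valid. All constants depend only on $p,q,d,\phi$ and on $R$ through $|B'|=c_d(R+1)^d$, giving the $R$-dependent equivalence $\sim_R$.
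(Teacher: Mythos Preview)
Your argument is correct. The paper does not supply its own proof of this lemma; it merely quotes the result from \cite[Lemma 3.2]{Cordero2009}, so there is no in-paper proof to compare against. Your direct approach---choosing a compactly supported window $\phi$ with $\hat\phi(0)\neq 0$, using the convolution identity $V_\phi f(\cdot,\xi)=(M_{-\xi}f)\ast\tilde\phi$ to confine the $x$-support to $B(0,R+1)$, and then handling the upper bound via $L^\infty\hookrightarrow L^p$ on the bounded set plus Young, and the lower bound via $\int V_\phi f(x,\xi)\,dx=\hat f(\xi)\overline{\hat\phi(0)}$ plus H\"older---is clean and self-contained. One minor remark: in the upper bound the step $\|V_\phi f(\cdot,\xi)\|_{L^\infty_x}\le\|\scrF_x[V_\phi f(\cdot,\xi)]\|_{L^1_\eta}$ presupposes $\hat f\in L^1$, which need not hold a priori for $f\in\calS'$ with compact support; it would be cleaner to first treat $f\in\calS$ (or $f\in L^2$ with compact support) and then pass to general $f$ by density or duality, but the core estimate is sound.
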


Next, we give two basic results of the calculation of modulation space, which will be used in the proof of our main theorems.

\begin{lemma}\label{lm-sp}
Let $1\leq p,q\leq \infty$.
  Assume that $f_j\in \mpq$, $j=1,2,\cdots,d$. Then, the function $F=\otimes_{j=1}^df_j$ belongs to $\mpqd$, and we have
  \be
  \|F\|_{\mpqd}=\prod_{j=1}^d\|f_j\|_{\mpq}.
  \ee
  More precisely, the following equality is valid:
  \be
  \|V_{\otimes_{j=1}^d \phi_j}(\otimes_{j=1}^d f_j)\|_{L^{p,q}(\rdd)}
  =
  \prod_{j=1}^d \|V_{\phi_j}f_j\|_{L^{p,q}(\rr^2)},
  \ee
  where $\phi_j\in\calS\setminus\{0\}$, $j=1,2,\cdots,d$.
\end{lemma}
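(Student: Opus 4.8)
The plan is to reduce the tensor-product identity for modulation norms to a tensor-product identity for the STFT, and then to a Fubini-type factorization of the mixed-norm Lebesgue integral. First I would observe that the STFT of a tensor product factors as a tensor product: if $F=\otimes_{j=1}^d f_j$ and $\Phi=\otimes_{j=1}^d \phi_j$, then directly from the integral definition of $V_\Phi F$ and Fubini's theorem one gets
\be
V_{\Phi}F(x,\xi)=\prod_{j=1}^d V_{\phi_j}f_j(x_j,\xi_j),\qquad x=(x_1,\dots,x_d),\ \xi=(\xi_1,\dots,\xi_d).
\ee
This is the algebraic heart of the statement and is a routine computation from the definition of the STFT in Section 2, using $\overline{\Phi(t-x)}=\prod_j \overline{\phi_j(t_j-x_j)}$ and $e^{-2\pi i t\cdot\xi}=\prod_j e^{-2\pi i t_j\xi_j}$.

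Next I would plug this factorization into the mixed-norm $L^{p,q}(\rdd)$ norm from Definition~\ref{df-M}. Writing the phase-space variable as $(x,\xi)\in\rdd$ with $x$ the "time" block and $\xi$ the "frequency" block, the $L^p$-in-$x$ integral of $|V_\Phi F(x,\xi)|^p=\prod_j |V_{\phi_j}f_j(x_j,\xi_j)|^p$ factors over the $d$ coordinates $x_1,\dots,x_d$ by Fubini, giving $\prod_j \big(\int_{\rr}|V_{\phi_j}f_j(x_j,\xi_j)|^p\,dx_j\big)$. Raising to the power $q/p$ turns this product into $\prod_j\big(\int_{\rr}|V_{\phi_j}f_j(x_j,\xi_j)|^p\,dx_j\big)^{q/p}$, and the $L^q$-in-$\xi$ integral then factors over $\xi_1,\dots,\xi_d$ by Fubini again. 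Taking the $q$-th root yields
\be
\|V_{\Phi}F\|_{L^{p,q}(\rdd)}=\prod_{j=1}^d\Big(\int_{\rr}\big(\int_{\rr}|V_{\phi_j}f_j(x_j,\xi_j)|^p\,dx_j\big)^{q/p}d\xi_j\Big)^{1/q}=\prod_{j=1}^d\|V_{\phi_j}f_j\|_{L^{p,q}(\rr^2)},
\ee
which is the claimed precise equality; the norm identity $\|F\|_{\mpqd}=\prod_j\|f_j\|_{\mpq}$ then follows from Definition~\ref{df-M} (and its independence of the window). The endpoint cases $p=\infty$ or $q=\infty$ are handled the same way with the essential supremum, since $\sup$ of a product of nonnegative functions of separate variables is the product of the sups.

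The only genuine technical point — and the mild obstacle worth flagging — is justifying the interchange of integrals and the factorization at the level of $\calS'\times\calS$ rather than $L^2\times L^2$: one should first verify the STFT factorization for Schwartz $f_j$ (where Fubini is immediate), note that $F=\otimes_j f_j\in\calS(\rd)$ in that case, and then extend to general $f_j\in M^{p,q}(\rr)$ either by the density of $\calS$ in the relevant sense together with continuity of both sides, or directly by noting that the pairing $\langle F, M_\xi T_x\Phi\rangle$ splits as $\prod_j\langle f_j, M_{\xi_j}T_{x_j}\phi_j\rangle$ since $M_\xi T_x\Phi=\otimes_j M_{\xi_j}T_{x_j}\phi_j$ and the distribution $F$ acts on a tensor product of test functions coordinatewise. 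Once the pointwise STFT identity is in hand, everything else is Tonelli applied to nonnegative integrands, so no convergence subtleties remain.
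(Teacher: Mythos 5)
Your proposal is correct and follows essentially the same route as the paper: factor the STFT of a tensor product into $\prod_{j=1}^d V_{\phi_j}f_j(x_j,\xi_j)$ by a direct Fubini computation, then take the mixed $L^{p,q}$ norm and factor it coordinatewise. The paper's proof is exactly this two-line computation; your extra care about extending the pointwise identity from Schwartz functions to the distributional pairing is a harmless (and reasonable) refinement of the same argument.
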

\begin{proof}
  For $\phi_j\in \calS(\rr)$, $j=1,2,\cdots,d$,
  a direct calculation yields that
  \be
  \begin{split}
    &V_{\otimes_{j=1}^d \phi_j}(\otimes_{j=1}^d f_j)(x,\xi)
    \\
    = &
    \int_{\rd}\prod_{j=1}^d f_j(y_j)\prod_{j=1}^d\overline{\phi_j(y_j-x_j)}e^{-2\pi i\sum_{j=1}^dy_j\cdot\xi_j}dy
    \\
    = &
    \prod_{j=1}^d\int_{\rr} f_j(y_j)\overline{\phi_j(y_j-x_j)}e^{-2\pi iy_j\cdot\xi_j}dy_j
    \\
    =&
    \prod_{j=1}^d V_{\phi_j}f_j(x_j,\xi_j).
  \end{split}
  \ee
  The desired conclusion follows by taking the $L^{p,q}(\rdd)$ norm on both sides of the above equality.
\end{proof}

\begin{lemma}\label{lm-orthogonal}
	Let $1\leq p,q\leq \infty$ and let $P$ be an orthogonal matrix.
	Then $\|D_P f\|_{\mpqd} = \|f\|_{\mpqd}$ for all $f\in \mpqd$.
\end{lemma}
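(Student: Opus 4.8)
The plan is to reduce the invariance under $D_P$ to an invariance property of the STFT. First I would fix a window $\phi \in \calS(\rd)\setminus\{0\}$ and compute $V_\phi(D_P f)(x,\xi)$ directly from the definition: substituting $t = P^{-1}u$ (equivalently $u = Pt$) in the integral $\int_{\rd} f(Pt)\overline{\phi(t-x)}e^{-2\pi i t\cdot\xi}\,dt$, and using $|\det P| = 1$ since $P$ is orthogonal, one gets
\[
V_\phi(D_P f)(x,\xi) = \int_{\rd} f(u)\overline{\phi(P^{-1}u - x)}e^{-2\pi i (P^{-1}u)\cdot\xi}\,du.
\]
Now $P^{-1}u - x = P^{-1}(u - Px)$ and $(P^{-1}u)\cdot\xi = (P^{-1}u)\cdot\xi = u\cdot(P^{-T}\xi) = u\cdot(P\xi)$, where the last equality uses $P^{-T} = P$ for orthogonal $P$. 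Hence the integral equals $V_{D_{P^{-1}}\phi}f(Px, P\xi) = V_{D_P \phi}f(Px,P\xi)$ after renaming; the precise identity I would record is
\[
V_\phi(D_P f)(x,\xi) = V_{D_{P^{-1}}\phi}f(Px, P\xi).
\]

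Next I would take the $L^{p,q}(\rdd)$ norm of both sides. The right-hand side is the composition of the function $V_{D_{P^{-1}}\phi}f$ with the linear change of variables $(x,\xi)\mapsto(Px,P\xi)$ on $\rdd$, which is itself orthogonal (block-diagonal with two copies of $P$) and hence measure-preserving — but here one must be slightly careful, because the mixed norm $L^{p,q}$ is \emph{not} rotation-invariant in general: it treats the $x$-variables and $\xi$-variables asymmetrically, so a rotation mixing $x$ and $\xi$ would not preserve the norm. The point is that our change of variables does \emph{not} mix $x$ with $\xi$: it acts as $P$ on the $x$-block and as $P$ on the $\xi$-block separately. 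For such a "block rotation" one can apply, for each fixed $\xi$, the rotation invariance of $\|\cdot\|_{L^p(\rd)}$ in the inner integral, and then the rotation invariance of $\|\cdot\|_{L^q(\rd)}$ in the outer integral (after the harmless substitution $\xi\mapsto P\xi$). This gives
\[
\|V_\phi(D_P f)\|_{L^{p,q}(\rdd)} = \|V_{D_{P^{-1}}\phi}f\|_{L^{p,q}(\rdd)}.
\]
Finally, since $D_{P^{-1}}\phi \in \calS(\rd)\setminus\{0\}$ is just another admissible window, the window-independence of the modulation space norm (the remark following Definition \ref{df-M}) yields $\|V_{D_{P^{-1}}\phi}f\|_{L^{p,q}(\rdd)} \sim \|V_\phi f\|_{L^{p,q}(\rdd)}$, so at this level of generality one gets $\|D_P f\|_{\mpqd}\sim \|f\|_{\mpqd}$ with constants depending on $\phi$ and $P$. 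To upgrade $\sim$ to an honest equality, I would choose the window more cleverly: take $\phi$ to be \emph{radial}, e.g. a Gaussian $\phi(t) = e^{-\pi|t|^2}$, so that $D_{P^{-1}}\phi = \phi$; then the chain of identities above collapses to $\|V_\phi(D_P f)\|_{L^{p,q}} = \|V_\phi f\|_{L^{p,q}}$ exactly, i.e. $\|D_P f\|_{\mpqd} = \|f\|_{\mpqd}$.

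The only genuinely delicate point is the middle step: one must verify that the mixed norm is invariant under the block-diagonal rotation $(x,\xi)\mapsto(Px,P\xi)$, separating the inner and outer integrals and using that $P$ is orthogonal so that both $dx$ and $d\xi$ and the Euclidean lengths are preserved; everything else is a change of variables plus the standard window-independence (used here only to justify picking the Gaussian window, or alternatively dispensed with entirely by working with the Gaussian from the start). I expect no real obstacle — this is a bookkeeping lemma — but the asymmetry of $L^{p,q}$ is the thing a careful reader will want checked, so I would state the block-rotation invariance as a one-line sub-claim and verify it explicitly.
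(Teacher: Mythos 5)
Your proposal is correct and follows essentially the same route as the paper: compute $V_\phi(D_Pf)(x,\xi)=V_{D_{P^{-1}}\phi}f(Px,P\xi)$ by a change of variables, observe that the block-diagonal substitution $(x,\xi)\mapsto(Px,P\xi)$ preserves the mixed $L^{p,q}$ norm since it does not mix the two variable groups, and then kill the window change by taking $\phi$ radial (the paper simply fixes a radial window from the outset). The one cosmetic slip — the parenthetical ``$=V_{D_P\phi}f(Px,P\xi)$ after renaming'' — is false as stated unless $P^2=I$, but it is immediately superseded by the correct identity you record, so nothing is affected.
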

\begin{proof}
	Let $\varphi\in \mathscr{S}$ be a nonzero radial function.
	By a direct calculation, we have
	\be
	\begin{split}
		V_{\varphi}D_Pf(x,\xi)
		=&
		\int_{\bbR^d}D_Pf(y) \overline{\varphi(y-x)} e^{-2\pi i y\cdot\xi}dy
		\\
		=&
		\int_{\bbR^d}f(y) \overline{\varphi(P^Ty-x)} e^{-2\pi i P^Ty\cdot\xi}dy
		\\
		=&
		\int_{\bbR^d}f(y) \overline{\varphi(y-Px)} e^{-2\pi i y\cdot P\xi}dy
		\\
		=&
		V_{\varphi}f(Px,P\xi).
	\end{split}
	\ee
	Combining this with the definition of modulation spaces, we obtain
	\be
	\begin{split}
		\|D_Pf\|_{\mpqd}
		=
		\|V_{\varphi}D_Pf\|_{L^{p,q}(\rdd)}
		=
		\|V_{\varphi}f(P\cdot)\|_{L^{p,q}(\rdd)}
		=
		\|V_{\varphi}f\|_{L^{p,q}(\rdd)}
		=
		\|f\|_{\mpqd}.
	\end{split}
	\ee
\end{proof}

\section{Matrix dilation operators}
In this section, we explore the asymptotic estimates of the matrix dilation operators $D_{A}$ on modulation spaces.
Our strategy is to establish the estimates gradually from the particular case to the general one.
Let us start with the scalar matrix case, that is, the dilation operator $D_{\la}$.

\subsection{Scalar matrix dilation}
In order to get the lower bound estimates of $\|D_{\la}\|_{M^{p,q}\longrightarrow M^{p,q}}$, we first give some useful estimates for $\|D_{\la}f\|_{M^{p,q}}$ with certain functions $f$.

\begin{lemma}\label{lm-lb1}
  Let $1\leq p,q\leq \fy$, and
  that $g_1$ be a nonzero smooth function with $\text{supp}\widehat{g_1}\subset B(0,1)$, then
  \be
  \|D_{\la}g_1\|_{M^{p,q}(\rd)}\sim \|D_{\la}g_1\|_{L^p(\rd)}=\la^{-d/p}\|g_1\|_{L^p(\rd)},
  \ \ \ \ \la\in (0,1].
  \ee
\end{lemma}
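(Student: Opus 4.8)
The plan is to reduce the modulation-space norm to an $L^p$ norm via Lemma 3.3 and then compute the $L^p$ norm explicitly. First I would observe that $\widehat{D_\la g_1}(\xi) = \la^{-d}\widehat{g_1}(\la^{-1}\xi)$, so that $\operatorname{supp}\widehat{D_\la g_1}\subset B(0,\la)$. Since $\la\in(0,1]$, this support is contained in $B(0,1)$, and in particular $\operatorname{supp}\widehat{D_\la g_1}$ lies in a fixed ball whose radius is bounded independently of $\la$. Hence Lemma 3.3 (with $R=1$) applies uniformly in $\la$ and gives $\|D_\la g_1\|_{M^{p,q}(\rd)}\sim \|D_\la g_1\|_{L^p(\rd)}$ with implicit constants independent of $\la$.

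For the second equality, a change of variables $t = \la x$ yields
\[
\|D_\la g_1\|_{L^p(\rd)}^p = \int_{\rd}|g_1(\la x)|^p\,dx = \la^{-d}\int_{\rd}|g_1(t)|^p\,dt = \la^{-d}\|g_1\|_{L^p(\rd)}^p,
\]
so $\|D_\la g_1\|_{L^p(\rd)} = \la^{-d/p}\|g_1\|_{L^p(\rd)}$, with the usual modification $\|D_\la g_1\|_{L^\infty} = \|g_1\|_{L^\infty}$ when $p=\infty$ (consistent with the convention $\la^{-d/p}=1$). Combining the two displays gives the claimed chain of relations.

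There is essentially no obstacle here: the only point requiring a little care is to check that the equivalence constant in Lemma 3.3, which a priori depends on $R$, can be taken uniform over the relevant range of $\la$. This is handled precisely because $\la\le 1$ forces the Fourier support into the fixed ball $B(0,1)$, so a single value $R=1$ works for all $\la\in(0,1]$. I would make that uniformity explicit in the write-up, since it is exactly this feature that makes the lemma useful for extracting the sharp $\la^{-d/p}$ lower-bound contribution in the subsequent estimates of $\|D_\la\|_{M^{p,q}\to M^{p,q}}$.
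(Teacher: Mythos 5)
Your proof is correct and follows exactly the paper's route: the paper likewise notes that $\operatorname{supp}\widehat{D_{\la}g_1}\subset B(0,1)$ for all $\la\in(0,1]$ and invokes Lemma \ref{lm-lp-1}, with the scaling identity for the $L^p$ norm left as an elementary computation. Your additional remarks on the uniformity of the equivalence constant in $\la$ are a worthwhile point the paper leaves implicit, but the substance of the argument is identical.
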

\begin{proof}
  Observe that
  $\text{supp}\widehat{D_{\la}g_1}\subset B(0,1)$ for all $\la\in (0,1]$.
  The desired conclusion follows by Lemma \ref{lm-lp-1}.
\end{proof}

\begin{lemma}\label{lm-lb2}
Let $1\leq p,q\leq \fy$, and
that $g_2$ be a nonzero smooth function supported in $B(0,1)$, then
  \be
  \|D_{\la}g_2\|_{M^{p,q}(\rd)}\sim \|D_{\la}g_2\|_{\scrF L^q(\rd)}=\la^{d(1/q-1)}\|g_2\|_{\scrF L^q(\rd)},
  \ \ \ \ \la\in [1,\fy).
  \ee
\end{lemma}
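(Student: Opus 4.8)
The plan is to mirror the proof of Lemma \ref{lm-lb1}, exploiting the dual "local" characterization of modulation spaces given by Lemma \ref{lm-lp-2} instead of Lemma \ref{lm-lp-1}. The key observation is that the support condition on $g_2$ is stable under dilation in the expanding regime: if $\text{supp}\,g_2\subset B(0,1)$, then for every $\la\in[1,\fy)$ we have $\text{supp}\,D_{\la}g_2\subset B(0,1/\la)\subset B(0,1)$. Hence Lemma \ref{lm-lp-2} (with $R=1$) applies uniformly in $\la$ and gives
\be
\|D_{\la}g_2\|_{M^{p,q}(\rd)}\sim_1 \|D_{\la}g_2\|_{\scrF L^q(\rd)},\qquad \la\in[1,\fy),
\ee
with an implied constant that does not depend on $\la$.

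It then remains to compute the $\scrF L^q$ norm explicitly. First I would record how the Fourier transform interacts with dilation: $\widehat{D_{\la}g_2}(\xi)=\la^{-d}\widehat{g_2}(\xi/\la)=\la^{-d}(D_{1/\la}\widehat{g_2})(\xi)$. Taking the $L^q$ norm and changing variables $\eta=\xi/\la$ yields
\be
\|D_{\la}g_2\|_{\scrF L^q(\rd)}
=\|\widehat{D_{\la}g_2}\|_{L^q(\rd)}
=\la^{-d}\Big(\int_{\rd}|\widehat{g_2}(\xi/\la)|^q\,d\xi\Big)^{1/q}
=\la^{-d}\la^{d/q}\|\widehat{g_2}\|_{L^q(\rd)}
=\la^{d(1/q-1)}\|g_2\|_{\scrF L^q(\rd)}.
\ee
(The $q=\fy$ case follows by the usual modification: $\|\widehat{D_\la g_2}\|_{L^\infty}=\la^{-d}\|\widehat{g_2}\|_{L^\infty}$, consistent with the exponent $d(1/\infty-1)=-d$.) Combining the two displays gives the claimed equivalence and identity.

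I do not anticipate a genuine obstacle here; the only point requiring a little care is to make sure the equivalence constant from Lemma \ref{lm-lp-2} is genuinely uniform in $\la$, which is why it is essential to enclose all the dilated functions inside the \emph{fixed} ball $B(0,1)$ rather than in a $\la$-dependent ball — this is exactly what the restriction $\la\in[1,\fy)$ buys us, and it parallels the role of the restriction $\la\in(0,1]$ in Lemma \ref{lm-lb1}. One should also note that $D_\la g_2$ is again a nonzero smooth function, so $\|g_2\|_{\scrF L^q}=\|\widehat{g_2}\|_{L^q}$ is finite and nonzero and the right-hand side is meaningful.
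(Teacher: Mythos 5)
Your proof is correct and follows the paper's own argument: both rest on the observation that $\text{supp}\,D_{\la}g_2\subset B(0,1/\la)\subset B(0,1)$ for $\la\geq1$, so that Lemma \ref{lm-lp-2} applies with a fixed $R$, and the remaining identity is the standard dilation scaling of the $\scrF L^q$ norm. You merely spell out the change-of-variables computation that the paper leaves implicit.
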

\begin{proof}
  Observe that
  $\text{supp}D_{\la}g_2\subset B(0,1)$ for all $\la\in [1,\fy)$.
  The desired conclusion follows by Lemma \ref{lm-lp-2}.
\end{proof}

\begin{lemma}\label{lm-lb3}
Let $1\leq p,q\leq \fy$, and
that $h$ be a smooth function with $\text{supp}\widehat{h}\subset [-1/4,1/4]^d$ and $\|h\|_{L^p}=1$.
Let $f_k(x):=M_kf(x)=e^{2\pi ix\cdot k}h(x)$.
For $L>0$ we set
\be
F_{\la,L}(x)=\sum_{|k|_{\fy}\leq \la^{-1}}T_{Lk}f_k(x)
=
\sum_{|k|_{\fy}\leq \la^{-1}}f_k(x-Lk).
\ee
We have
\be
\lim_{L\rightarrow \fy}\frac{\|D_{\la}F_{\la,L}\|_{M^{p,q}}}{\|F_{\la,L}\|_{M^{p,q}}}\sim \la^{d(-2/p+1/q)},\ \ \ \la\in (0,1].
\ee
\begin{proof}
  By the definition of $F_{\la,L}$ and $\Box_k$,
  we have
  \be
  \Box_kF_{\la,L}=\Box_k(T_{Lk}f_k)=T_{Lk}f_k.
  \ee
  Then, a direct calculation yields that
  \ben\label{lm-lb3-1}
  \begin{split}
  \|F_{\la,L}\|_{\mpqd}
  =&
  \bigg(\sum_{|k|_{\fy}\leq \la^{-1}}\|\Box_kF_{\la,L}\|_{L^p}^q\bigg)^{1/q}
  =
  \bigg(\sum_{|k|_{\fy}\leq \la^{-1}}\|T_{Lk}f_k\|_{L^p}^q\bigg)^{1/q}
  \\=&
  \bigg(\sum_{|k|_{\fy}\leq \la^{-1}}\|f_k\|_{L^p}^q\bigg)^{1/q}
  =
  \bigg(\sum_{|k|_{\fy}\leq \la^{-1}}\|h\|_{L^p}^q\bigg)^{1/q}\sim \la^{-d/q}.
  \end{split}
  \een
  On the other hand, observe that
  \be
  \text{supp}\scrF D_{\la}(F_{\la,L})\subset [-2,2]^d.
  \ee
  By Lemma \ref{lm-lp-1} we have
  \be
  \|D_{\la}(F_{\la,L})\|_{\mpqd}
  \sim
  \|D_{\la}(F_{\la,L})\|_{\lpd}
  =
  \la^{-d/p}\|F_{\la,L}\|_{\lpd}.
  \ee
  Form this and the almost orthogonality of $\{T_{Lk}f_k\}_{|k|_{\fy}\leq \la^{-1}}$, we obtain that
  \ben\label{lm-lb3-2}
  \begin{split}
  \lim_{L\rightarrow \fy}\|D_{\la}(F_{\la,L})\|_{\mpqd}
  = &
  \lim_{L\rightarrow \fy}\la^{-d/p}\|F_{\la,L}\|_{\lpd}
  \\
  = &
  \la^{-d/p}\bigg( \lim_{L\rightarrow \fy}\sum_{|k|_{\fy}\leq \la^{-1}}\|T_{Lk}f_k(x)\|_{\lpd}^p\bigg)^{1/p}
  \\
  = &
  \la^{-d/p}\bigg(\sum_{|k|_{\fy}\leq \la^{-1}}\|h\|_{\lpd}^p\bigg)^{1/p}
  \sim \la^{-2d/p}.
  \end{split}
  \een
  The final conclusion follows by \eqref{lm-lb3-1} and \eqref{lm-lb3-2}.
\end{proof}
\end{lemma}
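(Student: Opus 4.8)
The plan is to estimate the numerator $\|D_{\la}F_{\la,L}\|_{\mpq}$ and the denominator $\|F_{\la,L}\|_{\mpq}$ of the quotient separately, exploiting that $\widehat h$ is supported in $[-1/4,1/4]^d$: each $f_k=M_kh$ then has $\widehat{f_k}$ supported in the unit cube $k+[-1/4,1/4]^d$, whereas $D_{\la}F_{\la,L}$ will turn out to be band-limited to a ball independent of $\la$ and $L$. Accordingly, I will compute the denominator from the discrete description of the modulation norm (Definition~\ref{df-Mc}) and the numerator from the local equivalence $\|\cdot\|_{\mpq}\sim\|\cdot\|_{L^p}$ for band-limited functions, Lemma~\ref{lm-lp-1}.

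For the denominator: translation does not move Fourier supports, and for the decomposition fixed in Section~2 one has $\sigma_k\equiv 1$ on $k+[-1/4,1/4]^d$ while $\operatorname{supp}\sigma_k$ is disjoint from $k'+[-1/4,1/4]^d$ for every $k'\neq k$; hence $\Box_k F_{\la,L}=T_{Lk}f_k$ when $|k|_\infty\le\la^{-1}$ and $\Box_k F_{\la,L}=0$ otherwise. Since translation and modulation are $L^p$-isometries, $\|\Box_k F_{\la,L}\|_{L^p}=\|h\|_{L^p}=1$, so Definition~\ref{df-Mc} yields $\|F_{\la,L}\|_{\mpq}=\big(\#\{k\in\zd:|k|_\infty\le\la^{-1}\}\big)^{1/q}\sim\la^{-d/q}$, a value independent of $L$, with constants uniform in $\la\in(0,1]$ (one checks $\#\{k:|k|_\infty\le\la^{-1}\}=(2\lfloor\la^{-1}\rfloor+1)^d\sim\la^{-d}$). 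For the numerator: $\operatorname{supp}\widehat{F_{\la,L}}\subset[-\la^{-1}-\tfrac14,\la^{-1}+\tfrac14]^d$, so scaling by $\la$ gives $\operatorname{supp}\scrF D_{\la}F_{\la,L}\subset[-1-\la/4,1+\la/4]^d\subset[-2,2]^d$ for $\la\in(0,1]$, and Lemma~\ref{lm-lp-1} gives $\|D_{\la}F_{\la,L}\|_{\mpq}\sim\|D_{\la}F_{\la,L}\|_{L^p}=\la^{-d/p}\|F_{\la,L}\|_{L^p}$. It then remains to compute $\lim_{L\to\fy}\|F_{\la,L}\|_{L^p}$: by the almost orthogonality as $L\to\fy$ of the (finitely many, once $\la$ is fixed) translates $T_{Lk}f_k$, $|k|_\infty\le\la^{-1}$ --- legitimate because $h$ decays at infinity --- one gets $\lim_{L\to\fy}\|F_{\la,L}\|_{L^p}=\big(\sum_{|k|_\infty\le\la^{-1}}\|f_k\|_{L^p}^p\big)^{1/p}\sim\la^{-d/p}$ (with the usual modification $\lim_{L\to\fy}\|F_{\la,L}\|_{L^\fy}=\|h\|_{L^\fy}=1=\la^{-d/p}$ when $p=\fy$). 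Putting these together, $\lim_{L\to\fy}\|D_{\la}F_{\la,L}\|_{\mpq}\sim\la^{-2d/p}$, and dividing by $\|F_{\la,L}\|_{\mpq}\sim\la^{-d/q}$ gives $\la^{-2d/p+d/q}=\la^{d(-2/p+1/q)}$, as asserted.

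The only genuinely analytic ingredient --- and the step I expect to need the most care --- is the asymptotic $L^p$-orthogonality of $\{T_{Lk}f_k\}_{|k|_\infty\le\la^{-1}}$: one has to show that the finitely many off-diagonal interactions between the spreading bumps contribute $o(1)$ as $L\to\fy$, which can be done by splitting $\rd$ into a ball of radius $\sim L$ around each $Lk$ plus a remainder and bounding the resulting tails by the $L^p$-smallness of $f_k$ near infinity (for $p=\fy$ one argues directly with the uniform decay of $h$). The rest is bookkeeping with Fourier supports, together with the point --- worth recording --- that all implicit constants in the $\sim$ relations are independent of $\la\in(0,1]$, which is the reason I keep $\#\{k:|k|_\infty\le\la^{-1}\}\sim\la^{-d}$ explicit instead of replacing it loosely by $\la^{-1}$.
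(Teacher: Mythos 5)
Your proposal is correct and follows essentially the same route as the paper: the denominator is computed from the discrete norm via $\Box_kF_{\la,L}=T_{Lk}f_k$ giving $\sim\la^{-d/q}$, the numerator is reduced to $\la^{-d/p}\|F_{\la,L}\|_{L^p}$ by the band-limitation $\operatorname{supp}\scrF D_{\la}F_{\la,L}\subset[-2,2]^d$ and Lemma \ref{lm-lp-1}, and the limit $L\to\fy$ is evaluated by the asymptotic $L^p$-orthogonality of the translates, yielding $\la^{-2d/p}$. Your additional remarks (uniformity of constants in $\la$, the $p=\fy$ modification, and the explicit verification of the Fourier supports) only make explicit what the paper leaves implicit.
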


With the above estimates, we establish the asymptotic estimates of $\|D_{\la}\|_{M^{p,q}\longrightarrow M^{p,q}}$ as follows.

\begin{proposition}\label{pp-scalar}
Let $1\leq p,q\leq \infty$, $\la>0$.
Then, there exist two positive constants $C_1$ and $C_2$ such that
\be
C_1\G_{p,q}(\la)^d
\leq
\|D_{\la}\|_{\mtmd}\leq C_2 \G_{p,q}(\la)^d.
\ee
\end{proposition}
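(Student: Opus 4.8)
The plan is to prove the two inequalities separately, and for each to treat the cases $\la\ge 1$ and $\la\in(0,1]$ in turn. By Theorem A (the Sugimoto--Tomita result), we already know that $\|D_\la\|_{\mtmd}\le C\la^{d\mu_1(p,q)}=C\,\G_{p,q}(\la)^d$ for $\la\ge1$ and $\|D_\la\|_{\mtmd}\le C\la^{d\mu_2(p,q)}=C\,\G_{p,q}(\la)^d$ for $\la\in(0,1]$, which is precisely the upper bound $\|D_\la\|_{\mtmd}\le C_2\,\G_{p,q}(\la)^d$ claimed. So the upper bound requires no new work; the real content is the lower bound, and here is where we improve on Theorems A and B, which only established optimality among power functions.

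For the lower bound, I would argue by testing $D_\la$ against well-chosen families of functions and showing that for each of the three exponents appearing in $\G_{p,q}(\la)=\max\{\la^{-1/p},\la^{1/q-1},\la^{-2/p+1/q}\}$, the norm $\|D_\la\|_{\mtmd}$ is bounded below by that quantity to the $d$-th power, up to a constant independent of $\la$. The building blocks are exactly Lemmas \ref{lm-lb1}, \ref{lm-lb2}, \ref{lm-lb3}. Concretely: Lemma \ref{lm-lb1} gives, for $\la\in(0,1]$, a fixed test function $g_1$ with $\|D_\la g_1\|_{M^{p,q}}/\|g_1\|_{M^{p,q}}\sim\la^{-d/p}$; Lemma \ref{lm-lb2} gives, for $\la\ge1$, a fixed $g_2$ with ratio $\sim\la^{d(1/q-1)}$. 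For the third exponent $-2/p+1/q$, Lemma \ref{lm-lb3} supplies, for $\la\in(0,1]$, the family $F_{\la,L}$ with $\lim_{L\to\infty}\|D_\la F_{\la,L}\|_{M^{p,q}}/\|F_{\la,L}\|_{M^{p,q}}\sim\la^{d(-2/p+1/q)}$, so by choosing $L$ large enough (depending on $\la$) we get a genuine lower bound for the operator norm. This handles all the cases with $\la\le1$ directly and the case $\la\ge1$ for the $\la^{d(1/q-1)}$ branch. To recover the remaining branches for $\la\ge1$ — namely $\la^{-d/p}$ (i.e. $\la^{d\mu_1}$ when $(1/p,1/q)\in\calA_1$) and $\la^{d(-2/p+1/q)}$ (when $(1/p,1/q)\in\calA_3$) — I would invoke a duality/inversion argument: the $A$-dilation satisfies $D_A^{-1}=D_{A^{-1}}$ and there is a standard relation $\|D_\la\|_{M^{p,q}\to M^{p,q}}\sim\|D_{1/\la}\|_{M^{p',q'}\to M^{p',q'}}$ coming from the duality $(M^{p,q})^*=M^{p',q'}$ together with $\langle D_\la f,g\rangle=\la^{-d}\langle f,D_{1/\la}g\rangle$. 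Under $(p,q)\mapsto(p',q')$ and $\la\mapsto1/\la$, the three exponents $-1/p,\,1/q-1,\,-2/p+1/q$ permute appropriately, so the $\la\le1$ estimates for $(p',q')$ transfer to the $\la\ge1$ estimates for $(p,q)$. Assembling, for every $\la$ we obtain $\|D_\la\|_{\mtmd}\gtrsim\la^{-d/p}$, $\gtrsim\la^{d(1/q-1)}$, and $\gtrsim\la^{d(-2/p+1/q)}$ simultaneously (each from the appropriate regime), hence $\|D_\la\|_{\mtmd}\gtrsim\max$ of the three $=\G_{p,q}(\la)^d$.

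The step I expect to be the main obstacle is the bookkeeping in the lower bound: making sure that for every pair $(1/p,1/q)$ and every $\la$, at least one of the three test-function estimates is available in the regime ($\la\le1$ or $\la\ge1$) where it was proved, and that the duality transfer correctly supplies the rest. In particular one must check that the exponent that equals $\mu_1(p,q)$ for $\la\ge1$ is reached either directly (the $1/q-1$ branch via Lemma \ref{lm-lb2}) or via the $(p',q')$-dual of a $\la\le1$ estimate, and likewise that $\mu_2(p,q)$ for $\la\le1$ is covered by the direct lemmas. A secondary technical point is the $L$-dependence in Lemma \ref{lm-lb3}: since the limit in $L$ is only asymptotic, one fixes $\la$ first, then picks $L=L(\la)$ so that the ratio exceeds $\tfrac12\cdot c\,\la^{d(-2/p+1/q)}$, which is harmless because the operator norm is the supremum over all test functions. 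Once these case checks are in place, combining the upper bound from Theorem A with the three lower bounds yields the stated two-sided estimate with constants $C_1,C_2$ depending only on $p,q,d$.
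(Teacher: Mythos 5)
Your overall strategy matches the paper's: the upper bound is quoted from Theorem A, and the lower bound comes from testing $D_\la$ against the three families of Lemmas \ref{lm-lb1}, \ref{lm-lb2}, \ref{lm-lb3}. Where you diverge is in how you transfer each test-function estimate from the regime where the lemma is stated to the opposite regime. The paper does this without any duality: since $D_\la\circ D_{1/\la}=\mathrm{id}$, one simply tests $D_\la$ against the function $D_{1/\la}g$ and gets $\|D_\la\|_{\mtmd}\geq \|g\|_{\mpqd}/\|D_{1/\la}g\|_{\mpqd}$, and the denominator is computed by the very same lemma (now applicable because $1/\la$ lies in the lemma's regime). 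This works uniformly for all $1\leq p,q\leq\infty$ within a single pair $(p,q)$. Your duality route can be made to work and the exponent bookkeeping does close up once the Jacobian is tracked, but it costs you two things the paper's trick avoids.

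First, the relation you call standard, $\|D_\la\|_{M^{p,q}\to M^{p,q}}\sim\|D_{1/\la}\|_{M^{p',q'}\to M^{p',q'}}$, is false as written (take $p=q=2$: the two norms are $\la^{-d/2}$ and $\la^{d/2}$). The correct identity is $\|D_\la\|_{M^{p,q}\to M^{p,q}}=\la^{-d}\|D_{1/\la}\|_{M^{p',q'}\to M^{p',q'}}$, and the factor $\la^{-d}$ is essential to making the three exponents $-1/p$, $1/q-1$, $-2/p+1/q$ map onto one another; you clearly have it in mind, but you must not absorb it into a ``$\sim$''. Second, the duality $(M^{p,q})^*=M^{p',q'}$ fails at the endpoints $p=\infty$ or $q=\infty$, which are included in the statement, so you would need an extra predual or density argument there; the composition trick needs none. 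Finally, one bookkeeping sentence is wrong: you assert that $\mu_2(p,q)$ for $\la\le1$ is covered by the direct lemmas, but in the region $\calB_2$ one has $\mu_2=1/q-1$, and the only lemma producing that exponent (Lemma \ref{lm-lb2}) is proved for $\la\ge1$; that branch for $\la\le1$ also requires the transfer (by either mechanism). This slip does not break your final assembly, since your framework does supply all three bounds for every $\la$ once the transfer is applied consistently, but the case check as you describe it is not the one you would actually have to carry out.
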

\begin{proof}
The upper boundness follows by Theorem A. We turn to the estimates of lower bound.

Let $g_1$ be a nonzero smooth function with $\text{supp}\widehat{g_1}\subset B(0,1)$, then
by Lemma \ref{lm-lb1}, we have
\be
\|D_{\la}\|_{\mtmd}\geq \frac{\|D_{\la}g_1\|_{\mpqd}}{\|g_1\|_{\mpqd}}\sim \la^{-d/p},\ \ \ \la\in (0,1],
\ee
and
\be
\|D_{\la}\|_{\mtmd}\geq \frac{\|D_{\la}\circ D_{1/\la}g_1\|_{\mpqd}}{\|D_{1/\la}g_1\|_{\mpqd}}
=
\frac{\|g_1\|_{\mpqd}}{\|D_{1/\la}g_1\|_{\mpqd}}
\sim \la^{-d/p},\ \ \la\in [1,\fy].
\ee
Let $g_2$ be a nonzero smooth function supported in $B(0,1)$, then by Lemma \ref{lm-lb2}, we have
\be
\|D_{\la}\|_{\mtmd}\geq \frac{\|D_{\la}g_2\|_{\mpqd}}{\|g_2\|_{\mpqd}}\sim \la^{d(1/q-1)},\ \ \ \ \la\in [1,\fy),
\ee
and
\be
\|D_{\la}\|_{\mtmd}\geq \frac{\|D_{\la}\circ D_{1/\la}g_2\|_{\mpqd}}{\|D_{1/\la}g_2\|_{\mpqd}}
=
\frac{\|g_2\|_{\mpqd}}{\|D_{1/\la}g_2\|_{\mpqd}}
\sim \la^{d(1/q-1)},\ \  \la\in (0,1].
\ee
Let $F_{\la, L}$ be stated as in Lemma \ref{lm-lb3} and by Lemma \ref{lm-lb3}, we have
\be
\begin{split}
\|D_{\la}\|_{\mtmd}
\geq &
\lim_{L\rightarrow \fy}\frac{\|D_{\la}F_{\la,L}\|_{\mpqd}}{\|F_{\la,L}\|_{\mpqd}}\sim \la^{d(-2/p+1/q)},\ \ \ \la\in (0,1],
\end{split}
\ee
and
\be
\begin{split}
\|D_{\la}\|_{\mtmd}
\geq &
\lim_{L\rightarrow \fy}\frac{\|D_{\la}\circ D_{1/\la}F_{1/\la,L}\|_{\mpqd}}{\|D_{1/\la}F_{1/\la,L}\|_{\mpqd}}
\\
= &
\frac{\|F_{1/\la,L}\|_{\mpqd}}{\|D_{1/\la}F_{1/\la,L}\|_{\mpqd}}
\sim \la^{d(-2/p+1/q)},\ \ \la\in [1,\fy].
\end{split}
\ee
By the above estimates, we have
\be
\|D_{\la}\|_{\mtmd}\gtrsim \max\{\la^{-d/p}, \la^{d(1/q-1)}, \la^{d(-2/p+1/q)}\}=\G_{p,q}(\la)^d,
\ee
which is the desired conclusion.
\end{proof}

As mentioned in Section 1, Proposition \ref{pp-scalar} is an essential improvement of Theorem A.

\subsection{Diagonal matrix dilation}
In this subsection, the results of Proposition \ref{pp-scalar} will be extended to the case of diagonal matrix.
We remark that the upper bound estimates in this subsection coincide with that in Theorem B and the related preliminary results of \cite{CorderoNicola2007JoFA}. For the sake of coherence of the article, we give our own proof here.
In fact, most cases of diagonal matrix dilation can be reduced to the case of scalar matrix, except the two special cases considered in the following lemma.
See also case $(p,q)=(2,1)$ in the proof of \cite[Theorem 3.4]{CorderoNicola2007JoFA}.

\begin{lemma}\label{lm-diagonalc}
Let $\La=diag(\la_1,\cdots,\la_d)$. We have
  \ben\label{lm-diagonalc-cd1}
  \|D_{\La}\|_{M^{2,1}\rightarrow M^{2,1}}\lesssim 1,\ \ \ \text{if}\  \la_j\geq 1\ \text{for all}\ j=1,2,\cdots,d,
  \een
  and
  \ben\label{lm-diagonalc-cd2}
  \|D_{\La}\|_{M^{2,\fy}\rightarrow M^{2,\fy}}\lesssim |\det {\La}|^{-1},\ \ \ \text{if}\  \la_j\in (0,1]\ \text{for all}\ j=1,2,\cdots,d.
  \een
\end{lemma}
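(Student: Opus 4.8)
The plan is to establish \eqref{lm-diagonalc-cd1} and \eqref{lm-diagonalc-cd2} separately, and then observe that they are dual to each other, so that only one requires real work. I would begin with \eqref{lm-diagonalc-cd1}, the estimate on $M^{2,1}$ with all $\la_j\ge 1$. The natural strategy is to work with the discrete definition (Definition \ref{df-Mc}) and control how $D_\La$ redistributes the frequency-uniform pieces $\Box_k f$. Since $\La$ is expanding in every coordinate, $D_\La$ compresses the spatial variable and dilates the frequency variable by $\La$; concretely, $\widehat{D_\La f}(\xi) = |\det\La|^{-1}\widehat{f}(\La^{-1}\xi)$, so the frequency content of $D_\La(\Box_k f)$ sits near $\La k$ and is spread over a box of sidelengths $\la_1,\dots,\la_d$. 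Thus each $\Box_\ell(D_\La f)$ receives contributions from only boundedly many — in fact $O(\prod_j \la_j) = O(|\det\La|)$ — of the dilated pieces, but each such box of integers $\ell$ meeting the support of $\widehat{D_\La(\Box_k f)}$ has cardinality $\sim \prod_j \la_j$ as well, so the counting has to be done carefully.

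The cleaner route, which I would actually pursue, is to reduce to $L^2$-based estimates exploiting that $p=2$. Using Lemma \ref{lm-lp-1} locally: for a single $\Box_k f$, the function $D_\La(\Box_k f)$ has Fourier support in a translate of a box of volume $|\det\La|$, and on such a set the $M^{2,1}$ and $M^{2,2}=L^2$ norms are comparable up to a factor that is a power of the number of unit cubes covering the box, i.e. up to $|\det\La|^{1/2}$ (this is the $\ell^1$-versus-$\ell^2$ loss over $\sim|\det\La|$ indices). Meanwhile $\|D_\La(\Box_k f)\|_{L^2} = |\det\La|^{-1/2}\|\Box_k f\|_{L^2}$. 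Combining, $\|D_\La(\Box_k f)\|_{M^{2,1}} \lesssim |\det\La|^{1/2}\cdot|\det\La|^{-1/2}\|\Box_k f\|_{L^2} = \|\Box_k f\|_{L^2}$, and then summing in $k$ and using the (almost) orthogonality of the supports $\{\La Q_k + [-1,1]^d\}$ together with the triangle inequality in $\ell^1$ gives $\|D_\La f\|_{M^{2,1}} \lesssim \sum_k \|\Box_k f\|_{L^2} = \|f\|_{M^{2,1}}$. The point where I expect to spend the most care is this almost-orthogonality / bounded-overlap bookkeeping: the dilated frequency boxes $\La Q_k$ tile $\rd$ but each one meets $\sim|\det\La|$ of the unit cubes $Q_\ell$, so one must check that the double sum $\sum_\ell \big\|\sum_k \Box_\ell D_\La(\Box_k f)\big\|_{L^2}$ does not lose a factor of $|\det\La|$; this is where the fact that, conversely, each $Q_\ell$ is covered by $O(1)$ of the $\La Q_k$ (since $\La$ is expanding) is essential, and it must be invoked in the right direction.

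For \eqref{lm-diagonalc-cd2}, I would not repeat the argument but instead dualize. Recall that $(M^{2,1})^* = M^{2,\infty}$ and that the adjoint of $D_\La$ on these spaces is, up to the scalar $|\det\La|^{-1}$, the dilation $D_{\La^{-1}}$ (since $\int D_\La f\cdot g = |\det\La|^{-1}\int f\cdot D_{\La^{-1}} g$). Hence $\|D_{\La}\|_{M^{2,\infty}\to M^{2,\infty}} = |\det\La|^{-1}\|D_{\La^{-1}}\|_{M^{2,1}\to M^{2,1}}$ after accounting for the Jacobian, and when every $\la_j\in(0,1]$ we have every eigenvalue of $\La^{-1}$ at least $1$, so \eqref{lm-diagonalc-cd1} applies to $\La^{-1}$ and yields $\|D_{\La^{-1}}\|_{M^{2,1}\to M^{2,1}}\lesssim 1$, giving \eqref{lm-diagonalc-cd2}. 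The only subtlety here is getting the Jacobian factor placed correctly in the duality pairing, which is routine; alternatively one can mirror the direct argument with the roles of the spatial and frequency variables (and of $L^2$ vs.\ $\scrF L^\infty$, via Lemma \ref{lm-lp-2}) interchanged, using $|\det\La|^{-1}$ as the exact $L^\infty$-norm dilation factor on the frequency side. I expect the duality route to be shorter and to be the one worth writing.
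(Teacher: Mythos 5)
Your proposal is correct, and both halves would compile into a valid proof, but the organization differs from the paper's in ways worth noting. For \eqref{lm-diagonalc-cd1} the paper works with the output: it computes $\Box_k(D_{\La} f)=\scrF^{-1}\big((D_{{\La}^T}\s_k)\widehat f\big)({\La}\cdot)$, groups the indices $k$ into the sets $\Theta_l=\{k:\ k\in {\La}^T Q_l\}$ of cardinality $\sim|\det\La|$, applies Cauchy--Schwarz within each group, and then resums using the almost orthogonality of $\{\s_k\}$ together with the inclusion $\text{supp}\,D_{{\La}^T}\s_k\subset 5Q_l$. You instead decompose the input, $f=\sum_k\Box_k f$, and bound each dilated piece by $\|D_{\La}\Box_k f\|_{M^{2,1}}\lesssim|\det\La|^{1/2}\|D_{\La}\Box_k f\|_{L^2}=\|\Box_k f\|_{L^2}$; this is the transpose of the same counting (the $|\det\La|^{1/2}$ loss from Cauchy--Schwarz over the $\sim|\det\La|$ unit cubes met by the dilated Fourier support exactly cancels the $L^2$ Jacobian), and the bookkeeping you flag as delicate is actually a non-issue: plain subadditivity of $\|\cdot\|_{M^{2,1}}$ over the sum in $k$ already gives $\|D_{\La}f\|_{M^{2,1}}\le\sum_k\|D_{\La}\Box_k f\|_{M^{2,1}}\lesssim\sum_k\|\Box_k f\|_{L^2}=\|f\|_{M^{2,1}}$, with no overlap count needed. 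For \eqref{lm-diagonalc-cd2} your duality argument (using $(M^{2,1})^*=M^{2,\fy}$, the identity $\langle D_{\La}f,g\rangle=|\det\La|^{-1}\langle f,D_{{\La}^{-1}}g\rangle$, and the fact that ${\La}^{-1}$ is expanding so that \eqref{lm-diagonalc-cd1} applies to it) is genuinely different from, and shorter than, the paper's, which repeats the direct frequency-counting with the roles reversed (the sets $\wt{\Theta}_k=\{l:\ \s_l D_{{\La}^T}\s_k\neq0\}$ now having cardinality $\sim|\det\La|^{-1}$). The duality route buys brevity at the cost of importing the duality of modulation spaces, which the paper only invokes later in the proof of Theorem \ref{thm-bdh}; the paper's direct route keeps the lemma self-contained and symmetric between the two cases.
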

\begin{proof}
A direct calculation yields that
\be
\begin{split}
\Box_k(D_{\La}f)(x)
= &
\scrF^{-1}(\s_k(\xi)\widehat{D_{\La}f}(\xi))(x)
\\
= &
|\det {\La}|^{-1}\scrF^{-1}(\s_k(\xi)\widehat{f}(({\La}^T)^{-1}\xi))(x)
\\
= &
\scrF^{-1}((D_{{\La}^T}\s_k)\widehat{f})({\La}x).
\end{split}
\ee
Then, we have
  \ben\label{lm-diagonalc-1}
  \begin{split}
    \|D_{\La}f\|_{M^{2,1}}
    =
    \sum_{k\in \zd}\|\Box_k(D_{\La}f)\|_{L^2}
    =
    |\det {\La}|^{-1/2}\sum_{k\in \zd}\|\scrF^{-1}((D_{{\La}^T}\s_k)\widehat{f})\|_{L^2}.
  \end{split}
  \een
  For $l\in \zd$,
  recall that $Q_l$ denotes the unit cube with the center at $l$.
  Set
  \be
  \Theta_l=\{k\in \zd, k\in {\La}^T Q_l\}.
  \ee
  Observe that
  $\#\Theta_l\sim|\det {\La}|$.
  Using this, \eqref{lm-diagonalc-1}, and the Cauchy-Schwartz inequality, we conclude that
  \ben\label{lm-diagonalc-2}
  \begin{split}
    \|D_{\La}f\|_{M^{2,1}}
    \leq &
    |\det {\La}|^{-1/2}\sum_{l\in \zd}\sum_{k\in \Theta_l}\|\scrF^{-1}((D_{{\La}^T}\s_k)\widehat{f})\|_{L^2}
    \\
    \lesssim &
    |\det {\La}|^{-1/2}\sum_{l\in \zd}|\det {\La}|^{1/2}
    \bigg(\sum_{k\in \Theta_l}\|\scrF^{-1}((D_{{\La}^T}\s_k)\widehat{f})\|_{L^2}^2\bigg)^{1/2}
    \\
    = &
    \sum_{l\in \zd}\bigg(\int_{\rd}\sum_{k\in \Theta_l}|(D_{{\La}^T}\s_k(\xi))\widehat{f}(\xi)|^2 d\xi \bigg)^{1/2}.
  \end{split}
  \een
  Using the almost orthogonality of $\{\s_k\}_{k\in \zd}$, we have
  \be
  \sum_{k\in \Theta_l}|(D_{{\La}^T}\s_k)\widehat{f}|^2
  \sim
  \Big|\sum_{k\in \Theta_l}(D_{{\La}^T}\s_k)\widehat{f} \Big|^2.
  \ee
  Then, we continue the estimates of \eqref{lm-diagonalc-2} by
  \ben\label{lm-diagonalc-3}
  \|D_{\La}f\|_{M^{2,1}}
  \lesssim
  \sum_{l\in \zd} \bigg(\int_{\rd}\Big|\sum_{k\in \Theta_l}(D_{{\La}^T}\s_k(\xi))\widehat{f}(\xi)\Big|^2 d\xi \bigg)^{1/2}.
  \een
  By the definition of $\Theta_l$, we obtain that
  \be
  \text{supp}D_{{\La}^T}\s_k\subset ({\La}^T)^{-1}k+({\La}^T)^{-1}(2Q_0)\subset ({\La}^T)^{-1}k+2Q_0\subset Q_l+2Q_0=5Q_l.
  \ee
  From this, there exists a constant $N$ such that
  \be
  \sum_{k\in \Theta_l}(D_{{\La}^T}\s_k)\leq \sum_{|k-l|\leq N}\s_k.
  \ee
  Using this and \eqref{lm-diagonalc-3}, we have
  \be
  \begin{split}
  \|D_{\La}f\|_{M^{2,1}}
  \lesssim &
  \sum_{l\in \zd}\bigg(\int_{\rd} \Big|\sum_{|k-l|\leq N}\s_k(\xi)\widehat{f}(\xi) \Big|^2 d\xi \bigg)^{1/2}
  \\
  \lesssim &
  \sum_{l\in \zd}\sum_{|k-l|\leq N}\|\Box_kf\|_{L^2}
  \lesssim
   N^d\sum_{l\in \zd}\|\Box_lf\|_{L^2}
  =
  N^d\|f\|_{M^{2,1}}.
  \end{split}
  \ee
  We have now completed the proof of \eqref{lm-diagonalc-cd1}.
  Next, we turn to the proof of \eqref{lm-diagonalc-cd2}.
  Using \eqref{lm-diagonalc-1}, we have
  \ben\label{lm-diagonalc-4}
  \|D_{\La}f\|_{M^{2,\fy}}
  \leq
  |\det {\La}|^{-1/2}\sup_{k\in \zd}\|\scrF^{-1}((D_{{\La}^T}\s_k)\widehat{f})\|_{L^2}.
  \een
   Set
  \be
  \wt{\Theta}_k=\{l\in \zd, \s_lD_{{\La}^T}\s_k\neq 0\}.
  \ee
  Observe that $|\wt{\Theta}_k|\sim|\det {\La}|^{-1}$. Using this and the almost orthogonality of $\{\s_k\}_{k\in \zd}$, we conclude that
  \be
  \begin{split}
  \|\scrF^{-1}((D_{{\La}^T}\s_k)\widehat{f})\|_{L^2}
  = &
  \|(D_{{\La}^T}\s_k)\widehat{f}\|_{L^2}
  = 
  \Big\|\sum_{l\in \wt{\Theta}_k}\s_l(D_{{\La}^T}\s_k)\widehat{f}\Big\|_{L^2}
  \\
  \leq &
  \Big\|\sum_{l\in \wt{\Theta}_k}\s_l\widehat{f}\Big\|_{L^2}
  \lesssim |\det {\La}|^{-1/2}\sum_{l\in \wt{\Theta}_k}\|\s_l\widehat{f}\|_{L^2}
  \\
  \leq &
  |\det {\La}|^{-1/2}\|f\|_{M^{2,\fy}}.
  \end{split}
  \ee
  The final conclusion follows by this and \eqref{lm-diagonalc-4}.
\end{proof}

The following proposition give the asymptotic estimates of $\|D_{\La}\|_{\mtmd}$.

\begin{proposition}\label{pp-diagonal}
Let $1\leq p,q\leq \infty$, $\la_j>0$ for $j=1,2,\cdots,d$.
Denote ${\La}=diag(\la_1,\cdots,\la_d)$.
Then, there exist two constants $C_1$ and $C_2$ such that
\be
C_1\prod_{j=1}^d\G_{p,q}(\la_j)
\leq
\|D_{\La}\|_{\mtmd}\leq C_2\prod_{j=1}^d\G_{p,q}(\la_j).
\ee
\end{proposition}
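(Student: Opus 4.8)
The plan is to prove the two inequalities by different means. The lower bound will follow from the multiplicativity of the modulation-space norm under tensor products (Lemma~\ref{lm-sp}) together with the one-dimensional estimates of Subsection~3.1; the upper bound will be reduced to the case of an expanding matrix and then obtained by interpolating between finitely many endpoint estimates, the only delicate one of which is furnished by Lemma~\ref{lm-diagonalc}.

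For the lower bound, fix $\La=\mathrm{diag}(\la_1,\dots,\la_d)$. For each index $j$ I would choose a one-dimensional test object realising, up to constants, the factor $\G_{p,q}(\la_j)$: according to which of $\la_j^{-1/p},\ \la_j^{1/q-1},\ \la_j^{-2/p+1/q}$ is the largest, one takes (when $\la_j\le 1$) the function $g_1$, the function $g_2$, or the family $F_{\la_j,L}$ from Lemmas~\ref{lm-lb1}--\ref{lm-lb3} in dimension one, and (when $\la_j\ge 1$) its image under $D_{1/\la_j}$, exactly as in the proof of Proposition~\ref{pp-scalar}. Setting $F^{(L)}=\bigotimes_{j=1}^{d}f_j^{(L)}$, one has $D_\La F^{(L)}=\bigotimes_{j=1}^{d}D_{\la_j}f_j^{(L)}$, so Lemma~\ref{lm-sp} gives
\[
\frac{\|D_\La F^{(L)}\|_{M^{p,q}(\rd)}}{\|F^{(L)}\|_{M^{p,q}(\rd)}}=\prod_{j=1}^{d}\frac{\|D_{\la_j}f_j^{(L)}\|_{M^{p,q}(\bbR)}}{\|f_j^{(L)}\|_{M^{p,q}(\bbR)}}.
\]
Letting $L\to\infty$ — the limit of a finite product is the product of the limits, and each factor is controlled by the relevant one-dimensional lemma — yields $\|D_\La\|_{\mtmd}\gtrsim\prod_{j=1}^{d}\G_{p,q}(\la_j)$.

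For the upper bound I would first reduce to the case $\la_j\ge 1$ for all $j$. Write $\La=\La_{\ge}\La_{<}$, where $\La_{\ge}$ keeps the entries that are $\ge 1$ and is the identity on the other coordinates, and $\La_{<}$ does the opposite; as these operators commute, $\|D_\La\|\le\|D_{\La_{\ge}}\|\,\|D_{\La_{<}}\|$. Combining $(D_A)^{*}=|\det A|^{-1}D_{A^{-1}}$ for the $L^2$ pairing with $(M^{p,q})'=M^{p',q'}$ gives $\|D_{\La_{<}}\|_{M^{p,q}\to M^{p,q}}=|\det\La_{<}|^{-1}\|D_{\La_{<}^{-1}}\|_{M^{p',q'}\to M^{p',q'}}$, and $\La_{<}^{-1}$ has all entries $\ge 1$; the elementary identity $\G_{p,q}(\la)=\la^{-1}\G_{p',q'}(1/\la)$, equivalently $\mu_2(p,q)=-1-\mu_1(p',q')$, then converts the bound for $\La_{<}^{-1}$ into the one wanted for $\La_{<}$. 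Hence it suffices to prove
\[
\|D_\La\|_{\mtmd}\lesssim\prod_{j=1}^{d}\la_j^{\mu_1(p,q)}=|\det\La|^{\mu_1(p,q)}\qquad\text{whenever all }\la_j\ge 1.
\]
On $\calA_1,\calA_2,\calA_3$ the exponent $\mu_1(p,q)$ equals $-1/p$, $1/q-1$, $-2/p+1/q$ respectively, each affine in $(1/p,1/q)$, and each of these regions is the convex hull of some of the points $(\infty,\infty),(1,\infty),(\infty,1),(1,1),(2,1),(2,2)$ in the $(1/p,1/q)$-plane. I would establish, for $\la_j\ge 1$, the endpoint bounds at these six points: the $M^{2,2}=L^2$ bound $|\det\La|^{-1/2}$ is immediate; the $M^{2,1}$ bound $\lesssim 1$ is Lemma~\ref{lm-diagonalc}; and the $M^{1,1},M^{1,\infty},M^{\infty,1},M^{\infty,\infty}$ bounds (namely $\lesssim 1$, $|\det\La|^{-1}$, $|\det\La|$, $\lesssim 1$) follow from the identity $\Box_k(D_\La f)(x)=\scrF^{-1}((D_{\La^T}\sigma_k)\widehat f)(\La x)$ of the proof of Lemma~\ref{lm-diagonalc}: since $\la_j\ge 1$, the bump $D_{\La^T}\sigma_k$ is supported in a translate of a cube of bounded size, hence meets only $O(1)$ of the unit cubes $Q_l$, while for each $l$ there are $\sim|\det\La|$ indices $k$ with $(\La^T)^{-1}k\in Q_l$; thus $\|\Box_k(D_\La f)\|_{L^r}=|\det\La|^{-1/r}\|\scrF^{-1}((D_{\La^T}\sigma_k)\widehat f)\|_{L^r}\lesssim|\det\La|^{-1/r}\sum_{l\sim l_k}\|\Box_l f\|_{L^r}$ for $r\in\{1,\infty\}$ (the sum over the $O(1)$ indices $l$ with $Q_l$ meeting $\mathrm{supp}\,D_{\La^T}\sigma_k$), and summing (resp.\ taking the supremum) in $k$ produces the four bounds. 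Complex interpolation of $D_\La$ among these endpoints then gives $\|D_\La\|_{\mtmd}\lesssim|\det\La|^{\mu_1(p,q)}$ for every $(1/p,1/q)\in[0,1]^2=\calA_1\cup\calA_2\cup\calA_3$, since on each $\calA_i$ the interpolated affine exponent coincides with $\mu_1$.

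The step I expect to be the real obstacle is the corner $(p,q)=(2,1)$ and, dually, $(2,\infty)$: the naive Young-type estimate above only gives $\|D_\La\|_{M^{2,1}\to M^{2,1}}\lesssim|\det\La|^{1/2}$ when $\la_j\ge 1$, losing exactly the factor $|\det\La|^{1/2}$ that separates it from the sharp exponent $0$, and it is this loss that is repaired by the almost-orthogonality and Cauchy--Schwarz argument of Lemma~\ref{lm-diagonalc} — which is why those two corners are singled out. A minor technical point is the usual care required when an interpolation endpoint has $p=\infty$ or $q=\infty$; this is handled by first proving the bounds on $\calS(\rd)$ and extending by density to $\calM^{p,q}(\rd)$.
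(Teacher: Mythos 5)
Your proposal is correct and shares the paper's skeleton (lower bound by tensorization via Lemma~\ref{lm-sp} and the one-dimensional estimates; upper bound by endpoint estimates plus interpolation, with Lemma~\ref{lm-diagonalc} supplying the delicate $(2,1)$ corner), but the upper bound is implemented along a genuinely different route. The paper does not prove the corner estimates at $M^{1,1}$, $M^{1,\infty}$, $M^{\infty,1}$, $M^{\infty,\infty}$ by the $\Box_k$-counting argument you describe; instead it uses the STFT reproducing formula together with Young's inequality (Lemma~\ref{lm-Young-mixed-norm}) to obtain the single master bound $\|D_\La f\|_{M^{p,q}}\lesssim|\det\La|^{-1/p+1/q}\prod_j\G_{1,1}(\la_j)\|f\|_{M^{p,q}}$, which is already sharp on the entire boundary lines $p=1$, $p=\infty$, $q=\infty$ (expanding case) resp.\ $q=1$ (contracting case), so that only $(2,2)$ and $(2,1)$ (resp.\ $(2,\infty)$) need to be added before interpolating. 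Your version needs the interpolation to recover the edges as well as the interior, but your counting argument for the four corners is sound (it is essentially the computation already carried out inside the proof of Lemma~\ref{lm-diagonalc}, specialized to $r\in\{1,\infty\}$), and the six endpoints you list do span each region $\calA_i$ with the correct affine exponent. Your second genuine departure is treating the contracting factor by duality, $(D_A)^*=|\det A|^{-1}D_{A^{-1}}$ together with $\G_{p,q}(\la)=\la^{-1}\G_{p',q'}(1/\la)$, rather than by a parallel second case; this buys you something concrete, namely that you never need the $M^{2,\infty}$ half \eqref{lm-diagonalc-cd2} of Lemma~\ref{lm-diagonalc}. The one place to be slightly more careful than you were is that reduction at the boundary exponents: when $p=1$ or $q=1$ you cannot literally transfer boundedness from $M^{p',q'}$ back to $M^{p,q}$ as the adjoint of an operator on the dual, since $M^{1,q}\neq(M^{\infty,q'})^*$; instead one computes $\|D_{\La_<}f\|_{M^{p,q}}$ for $f\in\calS$ by pairing against the unit ball of $(M^{p,q})^*=M^{p',q'}$ (Hahn--Banach), which gives the same bound — so this is a presentational fix, not a gap. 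The remaining caveats (complex interpolation with $p=\infty$ or $q=\infty$ endpoints, extension from $\calS$ by density) are ones the paper itself glosses over in exactly the same way.
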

\begin{proof}
  By Lemma \ref{lm-sp}, we have
  \be
  \begin{split}
  \|D_{\La}\|_{\mtmd}
  \geq  &
  \sup_{\|f_j\|_{\mpq}=1,1\leq j\leq d}\frac{\|D_{\La}(\otimes_{j=1}^df_j)\|_{\mpqd}}{\|\otimes_{j=1}^df_j\|_{\mpqd}}
  \\
  = &
  \sup_{\|f_j\|_{\mpq}=1,1\leq j\leq d}\frac{\|\otimes_{j=1}^dD_{\la_j}f_j\|_{\mpqd}}{\| \otimes_{j=1}^df_j\|_{\mpqd}}
  \\
  \sim &
  \prod_{j=1}^d
  \sup_{\|f_j\|_{\mpq}=1}\frac{\|D_{\la_j}f_j\|_{\mpq(\bbR)}}{\|f_j\|_{\mpq(\bbR)}}.
  \end{split}
  \ee
  Using this and Proposition \ref{pp-scalar}, we obtain that
  \be
  \begin{split}
  \|D_{\La}\|_{\mtmd}
  \gtrsim &
  \prod_{j=1}^d
  \sup_{\|f_j\|_{\mpq(\bbR)}=1}\frac{\|D_{\la_j}f_j\|_{\mpq(\bbR)}}{\|f_j\|_{\mpq(\bbR)}}
  \\
  = &
  \prod_{j=1}^d\|D_{\la_j}\|_{\mtm}
  \sim
  \prod_{j=1}^d\G_{p,q}(\la_j).
  \end{split}
  \ee
  Next, we turn to the estimate of upper bound.
  Let $\Phi=\otimes_{j=1}^d \phi$ with $\phi\in \calS(\rr)\setminus\{0\}$.
  Write
  \be
  V_{\Phi}(D_{\La}f)
  =
  \langle D_{\La}\Phi,D_{\La}\Phi \rangle^{-1}
   V_{\Phi}V^*_{D_{\La}\Phi}V_{D_{\La}\Phi}(D_{\La}f).
  \ee
  Noticing that
  \[
    \langle D_{\La}\Phi,D_{\La}\Phi \rangle
    =\int_{\bbR^d} |D_{\La}\Phi(x)|^2 dx
    =|\det {\La}|^{-1}\int_{\bbR^d} |\Phi(x)|^2 dx
    \sim
    |\det {\La}|^{-1},
  \]
 we have
  \be
  |V_{\Phi}(D_{{\La}}f)|
  \lesssim
  |\det {\La}||V_{\Phi}{D_{\La}\Phi}|\ast|V_{D_{\La}\Phi}(D_{\La}f)|.
  \ee
  Then, by Lemma \ref{lm-Young-mixed-norm}, we obtain that
  \ben\label{pp-diagonal-1}
  \begin{split}
  \|V_{\Phi}(D_{\La}f)\|_{\lpqsd}
  \lesssim &
  |\det {\La}|\|V_{\Phi}{D_{\La}\Phi}\|_{L^{1,1}(\bbR^{2d})}\|V_{D_{\La}\Phi}(D_{\La}f)\|_{\lpqsd}
  \\
  = &
  |\det {\La}|^{-1/p+1/q}\|V_{\Phi}{D_{\La}\Phi}\|_{L^{1,1}(\rdd)}\|f\|_{\mpqd},
  \end{split}
  \een
  where in the last equality we use the facts that
  \be
    V_{D_{\La}\Phi}(D_{\La}f)(x,\xi)=|\det {\La}|^{-1}V_{\Phi}f({\La}x,({\La}^T)^{-1}\xi),
  \ee
  and
  \begin{align*}
    \|V_{D_{\La}\Phi}(D_{\La}f)\|_{\lpqsd}
    &
    =|\det {\La}|^{-1}  \| V_{\Phi}f({\La}x,({\La}^T)^{-1}\xi)\|_{\lpqsd}
    \\
    &
    =|\det {\La}|^{-1-1/p+1/q}  \| V_{\Phi}f\|_{\lpqsd}
    \\
    &
    =|\det {\La}|^{-1-1/p+1/q}  \| f\|_{\mpqd}.
  \end{align*}
  Using Lemma \ref{lm-sp}, we have
  \be
  \|V_{\Phi}{D_{\La}\Phi}\|_{L^{1,1}(\rdd)}
  =
  \|D_{\La}\Phi\|_{M^{1,1}(\rd)}
  =
  \|\otimes_{j=1}^dD_{\la_j}\phi\|_{M^{1,1}(\rd)}
  =
  \prod_{j=1}^d\|D_{\la_j}\phi\|_{M^{1,1}(\rr)}.
  \ee

  From this and Theorem A, we continue the estimate of \eqref{pp-diagonal-1} by
  \ben\label{pp-diagonal-2}
  \begin{split}
  \|D_{\La}f\|_{\mpqd}
  = &
  \|V_{\Phi}(D_{{\La}}f)\|_{\lpqsd}
  \\
  \lesssim &
  |\det {\La}|^{-1/p+1/q}\prod_{j=1}^d\|D_{\la_j}\phi\|_{M^{1,1}(\rr)}\|f\|_{\mpqd}
  \\
  \lesssim &
  |\det {\La}|^{-1/p+1/q}\prod_{j=1}^d\G_{1,1}(\la_j)\|f\|_{\mpqd}.
  \end{split}
  \een
  Next, we consider three cases as follows.

  \textbf{Case 1.}
  If $\la_j\geq 1$ for all $j=1,2,\cdots,d$,
  we have $\G_{1,1}(\la_j)=1$. From this and the above estimate, we obtain that
  \ben\label{pp-diagonal-3}
  \|D_{\La}f\|_{\mpqd}\lesssim \prod_{j=1}^d\la_j^{-1/p+1/q}\|f\|_{\mpqd}.
  \een

  Observe that
  \be
  \prod_{j=1}^d\la_j^{-1/p+1/q}
  =\prod_{j=1}^d \G_{p,q}(\la_j)
  =\G_{p,q}\Big(\prod_{j=1}^d\la_j\Big)
  \ee
  for $p=\fy$, or $p=1$ or $q=\fy$.
  Using this, \eqref{pp-diagonal-3}, Lemma \ref{lm-diagonalc}, and the well-known conclusion for $\|D_{\La}\|_{M^{2,2}\rightarrow M^{2,2}}$, we conclude that
  \ben\label{pp-diagonal-4}
  \|D_{\La}\|_{\mtmd}
  \lesssim
  \G_{p,q}\Big(\prod_{j=1}^d\la_j\Big)
  \een
  is valid for $(p,q)=(2,2)$ or $(p,q)=(2,1)$, or $p=\fy$, or $p=1$ or $q=\fy$.
  By an interpolation argument, we deduce that \eqref{pp-diagonal-4} is valid for all $1\leq p,q\leq \fy$.

  \textbf{Case 2.}
  If $\la_j\in (0,1]$ for all $j=1,2,\cdots,d$, we have $\G_{1,1}(\la_j)=\la_j^{-1}$.
  Using this and \eqref{pp-diagonal-2}, we obtain that
  \be
  \|D_{\La}f\|_{\mpqd}\lesssim \prod_{j=1}^d\la_j^{-1/p+1/q-1}\|f\|_{\mpqd}.
  \ee
  We also have
  \be
  \prod_{j=1}^d\la_j^{-1/p+1/q-1}
  =\prod_{j=1}^d \G_{p,q}(\la_j)
  =\G_{p,q}\Big(\prod_{j=1}^d\la_j\Big)
  \ee
  for $p=\fy$, or $p=1$ or $q=1$.
  Using this, Lemma \ref{lm-diagonalc}, and the well-known conclusion for $\|D_{\La}\|_{M^{2,2}\rightarrow M^{2,2}}$, we conclude that
  \ben\label{pp-diagonal-5}
  \|D_{\La}\|_{\mtmd}
  \lesssim
  \G_{p,q}\Big(\prod_{j=1}^d\la_j\Big)
  \een
  is valid for $(p,q)=(2,2)$ or $(p,q)=(2,\fy)$, or $p=\fy$, or $p=1$ or $q=1$.
  By an interpolation argument, we deduce that \eqref{pp-diagonal-5} is valid for all $1\leq p,q\leq \fy$.

  \textbf{Case 3.}
  For general case, observe that
  \be
  \la_j=(\la_j\vee 1) (\la_j\wedge 1).
  \ee
  We write ${\La}={\La}_1{\La}_2$, with ${\La}_1=diag(\la_1\vee 1,\cdots,\la_d\vee 1)$
  and ${\La}_2=diag(\la_1\wedge 1,\cdots,\la_d\wedge 1)$.
  Using this and the conclusions in Case 1 and Case 2, we conclude that
  \be
  \begin{split}
  \|D_{\La}\|_{\mtmd}
  = &
  \|D_{{\La}_2}\circ D_{{\La}_1}\|_{\mtmd}
  \\
  \lesssim &
  \G_{p,q}\Big(\prod_{j=1}^d(\la_j\wedge 1)\Big) \cdot \G_{p,q}\Big(\prod_{j=1}^d(\la_j\vee 1)\Big)
  \\
  = &
  \prod_{j=1}^d\G_{p,q}(\la_j\wedge 1) \cdot \prod_{j=1}^d\G_{p,q}(\la_j\vee 1)
  =
  \prod_{j=1}^d\G_{p,q}(\la_j).
  \end{split}
  \ee
  We have now completed this proof.
\end{proof}

\subsection{General matrix case}\label{subsec. Dilation. General}
Now, we are in a position to deal with the most general case.
We recall that the singular values of $A$ mean all the eigenvalues of $\sqrt{A^TA}$, with each eigenvalue $\la$
repeated $dim E(\la, \sqrt{A^TA})$ times.

For a non-degenerate matrix $A\in GL(d,\rr)$, denote by $\{\la_j\}_{j=1}^d$  the singular values of $A$.
Let $\La=diag(\la_1,\cdots,\la_d)$.
Then there exist two orthogonal matrices, denoted by $P$ and $Q$ such that
\be
A=P{\La}Q.
\ee
Combing Lemma \ref{lm-orthogonal} and Proposition \ref{pp-diagonal}, we have
\be
\begin{split}
\|D_A\|_{\mtmd}
= &
\sup_{\|f\|_{\mpqd}=1}\frac{\|D_Af\|_{\mpqd}}{\|f\|_{\mpqd}}
\\
= &
\sup_{\|f\|_{\mpqd}=1}\frac{\|D_{P{\La}Q}f\|_{\mpqd}}{\|f\|_{\mpqd}}
\\
= &
\sup_{\|f\|_{\mpqd}=1}\frac{\|D_{P{\La}}f\|_{\mpqd}}{\|D_Pf\|_{\mpqd}}
\\
= &
\sup_{\|D_Pf\|_{\mpqd}=1}\frac{\|D_{{\La}}D_Pf\|_{\mpqd}}{\|D_Pf\|_{\mpqd}}
\\
=&
\|D_{\La}\|_{\mtmd}
\sim
\prod_{j=1}^d\G_{p,q}(\la_j).
\end{split}
\ee

\section{Hausdorff operators}
In this section, we explore the boundedness on modulation spaces of Hausdorff operators.
Before studying the  boundedness, let us first do some preliminary work.

\subsection{A revisit to the definition of Hausdorff operator}
As in \cite{ZhaoFanGuo2018AFA}, in order to study the boundedness of $H_{\Phi,A}$, we must first
clarify how the Hausdorff operator $H_{\Phi,A}$ acts on the functions in modulation spaces.
More precisely, we will explore the weakest assumption added on $\Phi$ to ensure that $H_{\Phi,A}f$ becomes a
tempered distribution for every $f\in \calS(\rd)$.

The general Hausdorff operator associated with non-degenerate matrices $A(y)$ can be formally defined as
\[
  H_{\Phi, A}f(x):=\int_{\rn}\Phi(y)f(A(y)x)dy,
\]
where the integral makes sense if $f$ belongs to a ``fine'' class of test functions such as the Schwartz spaces $\calS(\rd)$.
In this paper, if the boundedess on modulation spaces of $H_{\Phi,A}$ is valid, the most basic conditions should be that $H_{\Phi,A}$ is a continuous map from $\calS(\rd)$ into $\calS'(\rd)$.
In \cite{ZhaoFanGuo2018AFA}, we consider this problem for the case that $A(y)$ is a diagonal matrix with the entries $1/|y|$. Recently,
by a similar method in \cite{ZhaoFanGuo2018AFA},
 Karapetyants--Liflyand \cite{KarapetyantsLiflyand2020MMitAe} make a partial extension to the general Hausdorff operator $H_{\Phi,A}$.
 However, in fact the method in \cite{ZhaoFanGuo2018AFA,KarapetyantsLiflyand2020MMitAe} is not really suited for the general Hausdorff operator,
 which also leads some gaps in  \cite[Theorem 6]{KarapetyantsLiflyand2020MMitAe}.
 Here, we try to give a reasonable definition of Hausdorff operator such that the map
is continuous from $\calS(\rd)$ into $\calS'(\rd)$.

For $f,g \in \calS(\rd)$, if the map
\ben\label{df-H-1}
T_f: g\mapsto \int_{\rd}\int_{\rn}\Phi(y)f(A(y)x)\overline{g}(x)dydx
\een
becomes a continuous linear functional on $\calS(\rd)$, that is, belongs to $\calS'(\rd)$,
the Hausdorff operator can be defined weakly from $\calS(\rd)$ into $\calS'(\rd)$, by
\be
H_{\Phi,A}: f\mapsto T_f.
\ee
Using this definition, we have
\be
\langle H_{\Phi,A}f, g\rangle=\int_{\rd}\int_{\rn}\Phi(y)f(A(y)x)\overline{g}(x)dydx,
\ee
where the bracket in the left side means the extension to $\calS(\rd)\otimes\calS'(\rd)$ of the usual complex inner product in $L^2(\rd)$.
To describe clearly the conditions that make this definition work, we give the following theorem.
\begin{theorem}\label{thm-dfH}
  Suppose that $A(y)\in GL(d,\rr)$ is non-degenerate for all $y\in \rn$, with singular values
  $\{\la_j(y)\}_{j=1}^d$. If the following condition holds
  \begin{equation}\label{weakest cond.}
    	\int_{\rn} |\Phi(y)| \cdot  \prod_{j=1}^d (1\wedge\lambda_j(y)^{-1}) dy<\infty,
  \end{equation}
  then the map $\eqref{df-H-1}$ belongs to $\calS'(\rd)$, and the Hausdorff operator $H_{\Phi,A}$
  can be defined weakly as a map from $\calS(\rd)$ into $\calS'(\rd)$ as mentioned above.
  Conversely, if the function $\Phi$ is nonnegative, then \eqref{weakest cond.} is the weakest condition that makes this definition of $H_{\Phi,A}$ meaningful, that is, makes the map $\eqref{df-H-1}$ belong to $\calS'(\rd)$.
\end{theorem}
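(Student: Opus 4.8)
The plan is to diagonalize $A(y)$ by its singular value decomposition and then to split the $x$-integration in \eqref{df-H-1} according to whether each singular value is $\geq 1$ or $<1$: the contracting directions will be handled by the Schwartz decay of $g$, the expanding ones by that of $f$ together with a change of variables that produces exactly the weight $\prod_{j=1}^d(1\wedge\lambda_j(y)^{-1})$ appearing in \eqref{weakest cond.}.

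The technical heart is a model estimate. Put $p_N(\phi):=\sup_{t\in\rd}\langle t\rangle^N|\phi(t)|$; this is dominated by the Schwartz seminorms of $\phi$ and is invariant under composition with orthogonal maps. I would show that for $\phi,\psi\in\calS(\rd)$, $\La=\mathrm{diag}(\lambda_1,\dots,\lambda_d)$ with all $\lambda_j>0$, and any $N>d$,
\be
\int_{\rd}|\phi(\La x)|\,|\psi(x)|\,dx\ \lesssim_N\ p_N(\phi)\,p_N(\psi)\prod_{j=1}^d(1\wedge\lambda_j^{-1}).
\ee
To prove it, partition $\{1,\dots,d\}=J_1\cup J_2$ with $J_1=\{j:\lambda_j\geq1\}$ and $J_2=\{j:\lambda_j<1\}$, write $x=(x_{J_1},x_{J_2})$, and substitute $u_{J_1}=\La_{J_1}x_{J_1}$ in the $J_1$-coordinates only. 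This contributes the Jacobian $\prod_{j\in J_1}\lambda_j^{-1}=\prod_{j=1}^d(1\wedge\lambda_j^{-1})$ and replaces $x_{J_1}$ by the contraction $\La_{J_1}^{-1}u_{J_1}$. Bounding $|\phi(u_{J_1},\La_{J_2}x_{J_2})|\leq p_N(\phi)\langle u_{J_1}\rangle^{-N}$ (decay of $\phi$ in its first block) and $|\psi(\La_{J_1}^{-1}u_{J_1},x_{J_2})|\leq p_N(\psi)\langle x_{J_2}\rangle^{-N}$ (decay of $\psi$ in its second block) leaves the convergent integral $\int\langle u_{J_1}\rangle^{-N}\langle x_{J_2}\rangle^{-N}\,du_{J_1}\,dx_{J_2}$, and the claim follows.

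With this estimate the sufficiency direction is short. Using the singular value decomposition $A(y)=P(y)\La(y)Q(y)$ with $P(y),Q(y)$ orthogonal, the substitution $x\mapsto Q(y)^Tx$ and $|P(y)z|=|z|$ give, for each $y$,
\be
\int_{\rd}|\Phi(y)f(A(y)x)g(x)|\,dx=|\Phi(y)|\int_{\rd}|(f\circ P(y))(\La(y)x)|\,|(g\circ Q(y)^T)(x)|\,dx,
\ee
which the model estimate and the orthogonal invariance of $p_N$ bound by $\lesssim_N|\Phi(y)|\,p_N(f)\,p_N(g)\prod_j(1\wedge\lambda_j(y)^{-1})$. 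Integrating in $y$ and invoking \eqref{weakest cond.} shows the double integral in \eqref{df-H-1} converges absolutely, so Fubini makes $T_f$ a well-defined functional; the same bound $|\langle H_{\Phi,A}f,g\rangle|\lesssim_N p_N(f)p_N(g)\int_{\rn}|\Phi|\prod_j(1\wedge\lambda_j^{-1})$ yields $T_f\in\calS'(\rd)$ for every $f$ and the continuity of $f\mapsto T_f$ into $\calS'(\rd)$. (Here I use the implicit measurability of $y\mapsto A(y)$; since the singular values are continuous in the matrix entries, $y\mapsto\prod_j(1\wedge\lambda_j(y)^{-1})$ is measurable.) For the converse, with $\Phi\geq0$ I would test on the Gaussian $\phi_0(t)=e^{-\pi|t|^2}$: writing $A(y)^TA(y)=Q(y)^T\La(y)^2Q(y)$ and substituting $z=Q(y)x$ gives $\int_{\rd}\phi_0(A(y)x)\phi_0(x)\,dx=\prod_{j=1}^d(1+\lambda_j(y)^2)^{-1/2}$, and $(1+\lambda^2)^{-1/2}\sim 1\wedge\lambda^{-1}$; since the integrand is nonnegative, Tonelli gives $\langle H_{\Phi,A}\phi_0,\phi_0\rangle\sim\int_{\rn}\Phi(y)\prod_j(1\wedge\lambda_j(y)^{-1})\,dy$, which is $+\infty$ whenever \eqref{weakest cond.} fails. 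Thus $T_{\phi_0}$ is not even finite on $\calS(\rd)$, so the definition of $H_{\Phi,A}$ fails and \eqref{weakest cond.} is optimal.

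The step I expect to be the main obstacle is the model estimate: one must avoid changing variables in all coordinates at once — that would produce the too-small weight $|\det A(y)|^{-1}=\prod_j\lambda_j^{-1}$ — and instead dilate only the expanding block, carefully assigning the Schwartz decay of $f$ to those coordinates and that of $g$ to the complementary ones. The remaining ingredients (the SVD reduction, the Gaussian integral, and Fubini/Tonelli) are routine.
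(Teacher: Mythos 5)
Your argument is correct and follows essentially the same route as the paper: for sufficiency, both reduce via the singular value decomposition and orthogonal invariance to a diagonal dilation and bound the spatial integral by $\prod_{j=1}^d(1\wedge\lambda_j(y)^{-1})$ times Schwartz seminorms of $f$ and $g$ (the paper tensorizes into one-dimensional integrals of $\langle\lambda_j x_j\rangle^{-\mathscr{L}}\langle x_j\rangle^{-\mathscr{L}}$, whereas you change variables only in the expanding block; the two devices give the same bound), and for necessity both test on a fixed nonnegative pair and show the inner spatial integral is $\gtrsim\prod_{j=1}^d(1\wedge\lambda_j(y)^{-1})$, the paper via the volume of $A(y)B(0,\sqrt d)\cap B(0,\sqrt d)$ and you via the exact Gaussian integral $\prod_{j=1}^d(1+\lambda_j(y)^2)^{-1/2}$. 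All steps check out, and your Gaussian computation is a marginally cleaner way to obtain the lower bound.
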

\begin{proof}
  We first verify that the map $\eqref{df-H-1}$ belongs to $\calS'(\rd)$ if \eqref{weakest cond.} holds.
  Let $f, g\in \calS(\rd)$. Observe that
  \be
  \begin{split}
  |f(x)|=|\langle x\rangle^{-\mathscr{L}}\langle x\rangle^{\mathscr{L}}f(x)|
  \leq \langle x\rangle^{-\mathscr{L}}\|\langle \cdot\rangle^{\mathscr{L}}f\|_{L^{\fy}(\rd)}.
  \end{split}
  \ee
  Using this and the the same estimate for $g$, we write
  \ben\label{df-H-2}
  \begin{split}
    &\bigg|\int_{\rd}\int_{\rn}\Phi(y)f(A(y)x)\overline{g}(x)dydx\bigg|
    \\
    \leq &
    \int_{\rd}\int_{\rn}\big|\Phi(y)\big|\langle A(y)x\rangle^{-\mathscr{L}}\langle x\rangle^{-\mathscr{L}}dydx
    \|\langle \cdot\rangle^{\mathscr{L}}f\|_{L^{\fy}(\rd)}\|\langle \cdot\rangle^{\mathscr{L}}g\|_{L^{\fy}(\rd)}
    \\
    = &
    \int_{\rn}\big|\Phi(y)\big|\bigg(\int_{\rd}\langle A(y)x\rangle^{-\mathscr{L}}\langle x\rangle^{-\mathscr{L}}dx\bigg)dy
    \|\langle \cdot\rangle^{\mathscr{L}}f\|_{L^{\fy}(\rd)}\|\langle \cdot\rangle^{\mathscr{L}}g\|_{L^{\fy}(\rd)}.
  \end{split}
  \een
  Recall that $A(y)=P(y)\La(y)Q(y)$ where $\La(y)=diag(\la_1(y),\cdots,\la_d(y))$,
   $P(y)$ and $Q(y)$ are two orthogonal matrices. Applying a change of variable, we deduce that
   \ben\label{df-H-3}
   \begin{split}
   \int_{\rd}\langle A(y)x\rangle^{-\mathscr{L}}\langle x\rangle^{-\mathscr{L}}dx
    = &
    \int_{\rd}\langle P(y)\La(y)Q(y)x\rangle^{-\mathscr{L}}\langle x\rangle^{-\mathscr{L}}dx
    = 
    \int_{\rd}\langle \La(y)x\rangle^{-\mathscr{L}}\langle x\rangle^{-\mathscr{L}}dx
    \\
    \sim &
    \int_{\rd}\prod_{j=1}^d\langle \la_j(y)x_j\rangle^{-\mathscr{L}}\prod_{j=1}^d\langle x_j\rangle^{-\mathscr{L}}dx
    =
    \prod_{j=1}^d\int_{\rr}\langle \la_j(y)x_j\rangle^{-\mathscr{L}}\langle x_j\rangle^{-\mathscr{L}}dx_j.
   \end{split}
   \een
Observe that
\be
\langle \la_j(y)x_j\rangle^{-\mathscr{L}}\langle x_j\rangle^{-\mathscr{L}}
\leq
\langle \la_j(y)x_j\rangle^{-\mathscr{L}}
\wedge
\langle x_j\rangle^{-\mathscr{L}},\ \ \ \ j=1,2,\cdots,d.
\ee
We obtain that
\ben\label{df-H-4}
\begin{split}
&
\int_{\rr}\langle \la_j(y)x_j\rangle^{-\mathscr{L}}\langle x_j\rangle^{-\mathscr{L}}dx_j
\\
\lesssim &
\int_{\rr}\langle x_j\rangle^{-\mathscr{L}}dx_j
\wedge
\int_{\rr}\langle \la_j(y)x_j\rangle^{-\mathscr{L}}dx_j
\lesssim
1\wedge \la_j(y)^{-1}.
\end{split}
\een
Using \eqref{df-H-2}, \eqref{df-H-3} and \eqref{df-H-4}, we find that
\be
\bigg|\int_{\rd}\int_{\rn}\Phi(y)f(A(y)x)\overline{g}(x)dydx\bigg|
\lesssim
\int_{\rn}\Phi(y) \cdot  \prod_{j=1}^d 1\wedge\lambda_j(y)^{-1} dy
\|\langle \cdot\rangle^{\mathscr{L}}f\|_{L^{\fy}(\rd)}\|\langle \cdot\rangle^{\mathscr{L}}g\|_{L^{\fy}(\rd)}.
\ee
Observe that
$\|\langle \cdot\rangle^{\mathscr{L}}f\|_{L^{\fy}(\rd)}$
and
$\|\langle \cdot\rangle^{\mathscr{L}}g\|_{L^{\fy}(\rd)}$
can be dominated by certain $\calS(\rd)$ semi-norms of $f$ and $g$, respectively.
Thus, we conclude that for every $f\in\calS(\rd)$, the map $T_f$ in $\eqref{df-H-1}$ belongs to $\calS'(\rd)$, and the Hausdorff operator $H_{\Phi,A}$
  can be defined weakly by $H_{\Phi,A}: f\mapsto T_f$
  as a continuous map from $\calS(\rd)$ into $\calS'(\rd)$.

  Next, we will prove the optimality of condition \eqref{weakest cond.}.
  Let $f,g\in \calS(\rd)$ with $f, g\geq \chi_{B(0,\sqrt{d})}$. Recall that $\Phi$ is non-negative, then
  \be
  \begin{split}
    \fy
    >&
    \int_{\rd}\int_{\rn}\Phi(y)f(A(y)x)\overline{g}(x)dydx
    \geq 
    \int_{\rn}\Phi(y)\int_{B(0,\sqrt{d})}\chi_{B(0,\sqrt{d})}(A(y)x)dx dy
    \\
    = &
    \int_{\rn}\Phi(y)|\det A(y)|^{-1}\int_{A(y)B(0,\sqrt{d})}\chi_{B(0,\sqrt{d})}(x)dx dy
    \\
    = &
    \int_{\rn}\Phi(y)|\det A(y)|^{-1}|A(y)B(0,\sqrt{d})\cap B(0,\sqrt{d})|dy.
  \end{split}
  \ee
  Recall $A(y)=P(y)\La(y)Q(y)$ as mentioned above. We conclude that

	\begin{align*}
	  |A(y)(B(0,\sqrt{d}))\cap B(0,\sqrt{d})|
	  &=| P(y)\La(y)Q(y)(B(0,\sqrt{d}))\cap B(0,\sqrt{d}) |
	  \\
	  &=|     \La(y)Q(y)(B(0,\sqrt{d}))\cap B(0,\sqrt{d}) |
	  \\
	  &=|     \La(y)      (B(0,\sqrt{d}))\cap B(0,\sqrt{d}) |
	  \\
	  &\geq
	    |\La(y)([-1/2,1/2]^d)\cap [-1/2,1/2]^d|
      \\
      &=
	    \prod_{j=1}^d 1\wedge\lambda_j(y).
	\end{align*}
	Using the above two estimates, we obtain
	\begin{align*}
	\infty
	&>
    \int_{\rn}\Phi(y)|\det A(y)|^{-1}|A(y)B(0,\sqrt{d})\cap B(0,\sqrt{d})|dy
    \\
    &\geq
	\int_{\rn}\Phi(y) \cdot \prod_{j=1}^d \lambda_j(y)^{-1} \cdot \prod_{j=1}^d 1\wedge\lambda_j(y) dy
	=
	\int_{\rn}\Phi(y) \cdot  \prod_{j=1}^d 1\wedge\lambda_j(y)^{-1} dy.
	\end{align*}
	This proves the optimality of the condition (\ref{weakest cond.}).
    \end{proof}

    Next, we want to show that the adjoint operator of $H_{\Phi,A}$ can also be well defined if  (\ref{weakest cond.}) holds.

    \begin{proposition}[Adjoint operator of Hausdorff operators]\label{R. Adj. Hausdorff}
    Suppose that $A(y)\in GL(d,\rr)$ is non-degenerate for all $y\in \rn$, with singular values
  $\{\la_j(y)\}_{j=1}^d$, and
    $\Phi$ satisfies the condition (\ref{weakest cond.}).
    Let $\wt{\Phi}(y)=\overline{\Phi(y)}\prod_{j=1}^d\la_j(y)^{-1}$.
    The adjoint Hausdorff operator
    $H^*_{\Phi,A}:=H_{ \wt{\Phi},A^{-1}}$ can be defined as a continuous map from
    $\calS(\rd)$ into $\calS'(\rd)$, by
    \be
    \langle H_{ \wt{\Phi},A^{-1}}f,g\rangle
    =
    \int_{\rd}\int_{\rn}\wt{\Phi}(y)f(A^{-1}(y)x)\overline{g}(x)dydx,\ \ \ \ \ f,g \in \calS(\rd).
    \ee
    Moreover, we have the adjoint relation between $H_{\Phi,A}$ and $H_{\wt{\Phi}, A^{-1}}$:
    \be
    \langle H_{\Phi, A}f, g \rangle=\overline{\langle H_{\wt{\Phi}, A^{-1}}g, f \rangle},\ \ \ \ \ f,g \in \calS(\rd).
    \ee
    \end{proposition}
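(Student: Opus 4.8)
The plan is to establish the two claims in Proposition \ref{R. Adj. Hausdorff} in sequence: first that $H_{\wt\Phi, A^{-1}}$ is a well-defined continuous map $\calS(\rd)\to\calS'(\rd)$, and then the adjoint relation. For the first part, the natural strategy is to check that the weight $\wt\Phi$ and the matrix family $A^{-1}$ satisfy the hypothesis \eqref{weakest cond.} of Theorem \ref{thm-dfH}, so that the conclusion follows immediately. The singular values of $A^{-1}(y)$ are $\{\la_j(y)^{-1}\}_{j=1}^d$, so the relevant integrand in \eqref{weakest cond.} applied to $(\wt\Phi, A^{-1})$ is $|\wt\Phi(y)|\prod_{j=1}^d(1\wedge\la_j(y))$. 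Since $|\wt\Phi(y)| = |\Phi(y)|\prod_{j=1}^d\la_j(y)^{-1}$, this integrand equals $|\Phi(y)|\prod_{j=1}^d\la_j(y)^{-1}(1\wedge\la_j(y)) = |\Phi(y)|\prod_{j=1}^d(\la_j(y)^{-1}\wedge 1)$, which is exactly the integrand in the original hypothesis \eqref{weakest cond.} on $(\Phi, A)$. Thus the condition transfers verbatim, and Theorem \ref{thm-dfH} gives that $H_{\wt\Phi, A^{-1}}$ is continuous from $\calS(\rd)$ into $\calS'(\rd)$, with the stated weak pairing formula.

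For the adjoint relation, I would start from the definitions and perform the change of variables $x\mapsto A(y)x$ inside the spatial integral. Concretely,
\[
\overline{\langle H_{\wt\Phi, A^{-1}}g, f\rangle}
=
\int_{\rd}\int_{\rn}\overline{\wt\Phi(y)}\,\overline{g(A^{-1}(y)x)}\,f(x)\,dy\,dx.
\]
Substituting $x = A(y)u$ (for fixed $y$), so that $dx = |\det A(y)|\,du$ and $A^{-1}(y)x = u$, the right-hand side becomes
\[
\int_{\rn}\overline{\wt\Phi(y)}\,|\det A(y)|\int_{\rd}\overline{g(u)}\,f(A(y)u)\,du\,dy.
\]
Since $\overline{\wt\Phi(y)}\,|\det A(y)| = \Phi(y)\prod_{j=1}^d\la_j(y)^{-1}\cdot\prod_{j=1}^d\la_j(y) = \Phi(y)$, this is precisely $\int_{\rd}\int_{\rn}\Phi(y)f(A(y)u)\overline{g(u)}\,du\,dy = \langle H_{\Phi, A}f, g\rangle$, which is the desired identity.

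The one point requiring care — and the step I expect to be the main obstacle — is the justification of Fubini's theorem and of the change of variables under the double integral, since a priori the inner integral $\int_{\rd}|\Phi(y)f(A(y)x)\overline g(x)|\,dx$ need not be finite for individual $y$ and the iterated integral is only conditionally controlled. The remedy is the absolute-integrability bound already established in the proof of Theorem \ref{thm-dfH}: for $f,g\in\calS(\rd)$ one has
\[
\int_{\rd}\int_{\rn}|\Phi(y)|\,|f(A(y)x)|\,|g(x)|\,dy\,dx
\lesssim
\int_{\rn}|\Phi(y)|\prod_{j=1}^d(1\wedge\la_j(y)^{-1})\,dy\cdot\|\langle\cdot\rangle^{\mathscr L}f\|_{L^\fy}\|\langle\cdot\rangle^{\mathscr L}g\|_{L^\fy}<\infty,
\]
so the integrand is absolutely integrable over $\rn\times\rd$; Fubini then applies, and the change of variables $x = A(y)u$ for fixed $y$ is a standard linear substitution with Jacobian $|\det A(y)|$. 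Once absolute integrability is in hand, everything reduces to the elementary computation above, and the proof is complete.
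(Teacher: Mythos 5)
Your proposal is correct and follows essentially the same route as the paper: the well-definedness is obtained by checking that the hypothesis \eqref{weakest cond.} transfers verbatim to $(\wt{\Phi},A^{-1})$ via the identity $\prod_{j=1}^d(1\wedge\la_j(y)^{-1})=\prod_{j=1}^d\la_j(y)^{-1}\cdot\prod_{j=1}^d(1\wedge\la_j(y))$, and the adjoint relation is the same change-of-variables computation (you run it from $\overline{\langle H_{\wt{\Phi},A^{-1}}g,f\rangle}$ to $\langle H_{\Phi,A}f,g\rangle$ rather than the reverse). Your explicit Fubini justification via the absolute-integrability bound from the proof of Theorem \ref{thm-dfH} is a welcome extra detail that the paper leaves implicit.
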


 \begin{proof}
    Observe that
    $\{\la_j(y)^{-1}\}_{j=1}^d$ are precisely the singular values of $A^{-1}$, and
    \be
    \prod_{j=1}^d \big(1\wedge\lambda_j(y)^{-1}\big)
    =\prod_{j=1}^d\la_j(y)^{-1}\cdot \prod_{j=1}^d \big(1\wedge\lambda_j(y)\big).
    \ee
    Then,
    \be
    \int_{\rn}|\Phi(y)| \cdot  \prod_{j=1}^d \big(1\wedge\lambda_j(y)^{-1}\big) dy<\infty
    \Longleftrightarrow
    \int_{\rn}|\wt{\Phi}(y)| \cdot  \prod_{j=1}^d \big(1\wedge\lambda_j(y)\big) dy<\infty.
    \ee
    From this, the condition (\ref{weakest cond.}) also holds if
    we replace $\Phi$ and $\la_j$ by
    $\wt{\Phi}$ and $\la_j^{-1}$ respectively.
    Thus, $H_{ \wt{\Phi},A^{-1}}$ can be well defined.

Moreover, a direct calculation shows that
  \begin{align*}
    \langle H_{\Phi, A}f, g \rangle
    &=
     \int_{\rn}\Phi(y)\int_{\bbR^d} f(A(y)x) \cdot \overline{g}(x)dx dy
    \\
    &=
    \int_{\rn} \Phi(y)  \cdot |\det{A^{-1}(y)}| \cdot \int_{\bbR^d}  f(x) \cdot \bar{g}(A^{-1}(y)x) dxdy
    \\
    &=
    \int_{\rd}\int_{\rn}\Phi(y)  \cdot \prod_{j=1}^d\la_j(y)^{-1} \cdot  \bar{g}(A^{-1}(y)x)f(x)dydx
    \\
    &=
    \overline{\int_{\rd}\int_{\rn}\overline{\Phi(y)}  \cdot \prod_{j=1}^d\la_j(y)^{-1} \cdot  g(A^{-1}(y)x)\overline{f(x)}dydx}
    =
    \overline{\langle H_{ \wt{\Phi},A^{-1}}g, f \rangle}.
  \end{align*}
    \end{proof}

As follows, we will show that under the condition \eqref{weakest cond.},
the (partial) Fourier transform of $H_{\Phi,A}$ can be well defined in the sense of distributions.

Following the symbol in \cite{GrochenigBook2013},
we adopt the following partial Fourier and inverse Fourier transform for $f\in\scrS$:
\begin{align*}
\fj f(x)     =\int_{\bbR} f(x_1,\cdots,y_j,\cdots,x_d) \cdot e^{-2\pi i y_j\cdot x_j} dy_j,
\\
\fj^{-1} f(x)=\int_{\bbR} f(x_1,\cdots,y_j,\cdots,x_d) \cdot e^{2\pi i y_j\cdot x_j} dy_j.
\end{align*}
For $\mathbb{J}=\{j_1,j_2,\cdots,j_m\}\subset \{1,2,\cdots, d\}$ with $m\geq1$, we define
\begin{align*}
\fJ f(x):=\scrF_{j_1}\cdots\scrF_{j_m}f(x),
\hspace{1mm}
\fJ^{-1} f(x):=\scrF_{j_1}^{-1}\cdots\scrF_{j_m}^{-1}f(x).
\end{align*}
Especially,  $\fJ f=\scrF f$ when $\mathbb{J}=\{1,2,\cdots,d\}$.
And we denote $\fJ f=\fJ^{-1} f=f$ when $\mathbb{J}=\emptyset$.

\begin{proposition}\label{pp-Fourier-Hausdorff}
  Suppose that $A(y)\in GL(d,\rr)$ is non-degenerate for all $y\in \rn$, with singular values
  $\{\la_j(y)\}_{j=1}^d$, and
    $\Phi$ satisfies the condition (\ref{weakest cond.}).
    Let $\Phi_{\bbJ}(y)={\Phi(y)}\prod_{j\in \bbJ}\la_j(y)^{-1}$.
    Then the Fourier transform can be well defined in the sense of distributions by
    \be
    \lan \fJ H_{\Phi,A}f, g\ran=\lan H_{\Phi,A}f, \fJ^{-1}g\ran,\ \ \ \ \ f,g \in \calS(\rd).
    \ee
    Moreover, we have the following equation in the sense of distributions
    \be
    \scrF H_{\Phi,A}f=H_{\Phi_{\bbJ},(A^{T})^{-1}}\scrF f,\ \ \ \ \bbJ=\{1,2,\cdots,d\},\ \ \ f \in \calS(\rd).
    \ee
    If $A(y)=\La(y)=\text{diag}\{\lambda_1(y),\cdots,\lambda_d(y) \}$,
    for all $\bbJ\subset \{1,2,\cdots,d\}$
    we have the following equation in the sense of distributions
    \be
    \fJ H_{\Phi,\La}f=H_{\Phi_{\bbJ},\La_{\bbJ}}\fJ f, \ \ \ f \in \calS(\rd).
    \ee
    where
  $\La_{\mathbb{J}}=\text{diag}\{\g_1(y), \cdots, \g_d(y)\}$ with
  \begin{equation*}
 \g_k(y)=
 \begin{cases}
 \la^{-1}_j(y),  &\text{if } j\in  \mathbb{J},\\
 \la_j(y), &\text{if }  j\notin\mathbb{J}.
 \end{cases}
 \end{equation*}
\end{proposition}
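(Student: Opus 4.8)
The plan is to prove each of the three claimed identities in turn, each time reducing the distributional statement to an elementary change of variables that is already justified in the Schwartz category by Theorem~\ref{thm-dfH}. \textbf{Definition of the partial Fourier transform.} For $f,g\in\calS(\rd)$, Theorem~\ref{thm-dfH} applied with the matrix function $A(y)$ (resp. its relatives below) shows that $H_{\Phi,A}f$ and all the operators appearing on the right-hand sides belong to $\calS'(\rd)$; since $\fJ^{-1}$ maps $\calS(\rd)$ continuously into itself, the pairing $\lan H_{\Phi,A}f,\fJ^{-1}g\ran$ makes sense, and we simply \emph{define} $\fJ H_{\Phi,A}f$ by duality through this formula. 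This is the standard convention (cf. \cite{GrochenigBook2013}); nothing needs to be proved here beyond noting that the right-hand side is a continuous linear functional of $g$.

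\textbf{The full Fourier transform identity.} To prove $\scrF H_{\Phi,A}f=H_{\Phi_\bbJ,(A^T)^{-1}}\scrF f$ with $\bbJ=\{1,\dots,d\}$, I would test against $g\in\calS(\rd)$ and expand:
\[
\lan \scrF H_{\Phi,A}f,g\ran
=\lan H_{\Phi,A}f,\scrF^{-1}g\ran
=\int_{\rn}\Phi(y)\int_{\rd}f(A(y)x)\,\overline{\scrF^{-1}g(x)}\,dx\,dy.
\]
Here one must be careful about the complex conjugate: $\overline{\scrF^{-1}g}=\scrF\overline{g}$ pointwise. The inner $x$-integral is, by Parseval and the dilation rule for the Fourier transform, equal to $\int_{\rd}f(A(y)x)\scrF\overline{g}(x)\,dx=\lan \widehat{f(A(y)\cdot)},\,\overline{g}\,\rangle_{L^2}=|\det A(y)|^{-1}\int_{\rd}\widehat f\big((A(y)^T)^{-1}\xi\big)\overline{g}(\xi)\,d\xi$. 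Reorganizing the $|\det A(y)|^{-1}=\prod_j\la_j(y)^{-1}$ factor into $\Phi$ produces exactly $\Phi_\bbJ$, and one recognizes the resulting double integral as $\lan H_{\Phi_\bbJ,(A^T)^{-1}}\scrF f,g\ran$. The only technical point is that all these manipulations (Parseval, Fubini) are legitimate because for fixed $y$ everything is Schwartz in $x$, and the outer $y$-integral converges absolutely by the estimates \eqref{df-H-2}--\eqref{df-H-4} underlying condition \eqref{weakest cond.}; I would invoke those estimates to justify interchanging $\int_{\rn}dy$ with the $x$-integration throughout.

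\textbf{The partial case for diagonal $A=\La$.} When $A(y)=\La(y)$ is diagonal, the partial Fourier transform $\fJ$ acts only on the variables $x_j$, $j\in\bbJ$, and the dilation $x_j\mapsto\la_j(y)x_j$ commutes coordinate-wise with $\fj$. So testing $\fJ H_{\Phi,\La}f$ against $g$ and writing $f(\La(y)x)=\bigotimes_k f(\cdots\la_k(y)x_k\cdots)$ schematically, the same one-dimensional Fourier-dilation computation as above is performed only in the coordinates $j\in\bbJ$: each such coordinate contributes a factor $\la_j(y)^{-1}$ (collected into $\Phi_\bbJ$) and turns the dilation factor $\la_j(y)$ into $\la_j(y)^{-1}$ in the argument, i.e. into $\g_j(y)$; the coordinates $j\notin\bbJ$ are untouched, so $\g_j(y)=\la_j(y)$ there. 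This is exactly $H_{\Phi_\bbJ,\La_\bbJ}\fJ f$. I would note that $\La_\bbJ(y)$ is still non-degenerate with singular values $\{\g_j(y)\}$, and that $\prod_j(1\wedge\g_j(y)^{-1})\le \prod_{j\in\bbJ}\la_j(y)\cdot\prod_j(1\wedge\la_j(y)^{-1})$ up to the $\Phi_\bbJ$ weight, so \eqref{weakest cond.} for the pair $(\Phi_\bbJ,\La_\bbJ)$ follows from that for $(\Phi,\La)$, making $H_{\Phi_\bbJ,\La_\bbJ}$ a well-defined element of $\calS'(\rd)$.

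\textbf{Main obstacle.} The computations are all elementary changes of variables; the real care goes into bookkeeping — tracking the complex conjugates correctly (so that $\scrF$ rather than $\scrF^{-1}$ appears on $\overline g$), tracking where the Jacobian factors $\prod_j\la_j(y)^{-1}$ land so that they assemble into $\Phi_\bbJ$ and not its conjugate, and in the partial case keeping straight which coordinates are dilated, which are Fourier-transformed, and why $\la_j$ flips to $\la_j^{-1}$ precisely on $\bbJ$. The other point requiring a word of justification is the interchange of the $\int_{\rn}dy$ integral with the $L^2$-pairing in $x$; this is handled by the absolute-convergence estimate already established in the proof of Theorem~\ref{thm-dfH}, which dominates the integrand uniformly by $|\Phi(y)|\prod_j(1\wedge\la_j(y)^{-1})$ times Schwartz semi-norms of $f$ and $g$.
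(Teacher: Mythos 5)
Your proposal is correct and follows essentially the same route as the paper: define $\fJ H_{\Phi,A}f$ by duality, verify that condition \eqref{weakest cond.} transfers to the pairs $(\Phi_{\bbJ},(A^T)^{-1})$ and $(\Phi_{\bbJ},\La_{\bbJ})$, and then reduce each identity to the change-of-variables/multiplication formula $\scrF\big(f(A(y)\cdot)\big)=|\det A(y)|^{-1}(\scrF f)\big((A(y)^T)^{-1}\cdot\big)$ (coordinate-wise in the diagonal case), with the $y$-integration interchange justified by the absolute-convergence estimates from Theorem \ref{thm-dfH}. Your bookkeeping of conjugates, Jacobian factors, and the flip $\la_j\mapsto\la_j^{-1}$ on $\bbJ$ matches the paper's computation.
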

\begin{proof}
  As in the proof of Proposition \ref{R. Adj. Hausdorff},  it is not difficult to verify
  that the corresponding condition as in \eqref{weakest cond.} is valid for $H_{\Phi_{\bbJ},(A^{T})^{-1}}$ and $H_{\Phi_{\bbJ},\La_{\bbJ}}$.
  Thus, as two special Hausdorff operators,  $H_{\Phi_{\bbJ},(A^{T})^{-1}}$ and $H_{\Phi_{\bbJ},\La_{\bbJ}}$ are well defined.
  
  For any $f,g\in \calS(\rd)$, $\bbJ=\{1,2,\cdots,d\}$,
 a direct calculation yields that
  \be
  \begin{split}
    \lan \scrF H_{\Phi,A}f, g\ran
    = &
    \lan H_{\Phi,A}f, \scrF^{-1}g\ran
    \\
    = &
    \int_{\rn}\Phi(y)\int_{\rd}f(A(y)x)\overline{\scrF^{-1}g(x)}dxdy
    \\
    = &
    \int_{\rn}\Phi(y)\int_{\rd}\scrF (f(A(y)\cdot))(x)\overline{g(x)}dxdy
        \\
    = &
    \int_{\rn}\Phi(y)\int_{\rd}|\det A(y)|^{-1}
    (\scrF f)((A(y)^T)^{-1}x)\overline{g(x)}dxdy
    \\
    = &
    \lan H_{\Phi_{\bbJ},(A^{T})^{-1}}\scrF f, g\ran.
  \end{split}
  \ee
  If $A(y)=\La(y)$, using the same method as above, we obtain that for all $\bbJ\subset \{1,2,\cdots,d\}$,
  \be
  \lan \fJ H_{\Phi,\La}f, g\ran
  =
  \int_{\rn}\Phi(y)\int_{\rd} \fJ (f(\La(y)\cdot))(x)\overline{g(x)}dxdy.
  \ee
  The desired conclusion follows by
  \be
  \begin{split}
  \fJ(f(\La(y)\cdot))(x)
  = &
  (\fJ f)(\La_{\bbJ}(y)x)
  \prod_{j\in\mathbb{J}} \la_j(y)^{-1}.
     \end{split}
  \ee
\end{proof}

Now, we are in a position to consider the boundedness property of $H_{\Phi,A}$ on modulation spaces.

\subsection{The boundedness of Hausdorff operators}
We turn to give the boundedness of Hausdorff operators.
First we establish the following general version of fundamental identity of time frequency analysis.
See the usual version in \eqref{itf}.

For $\mathbb{J}\subset \{1,2,\cdots, d\}$ and $x=(x_1,x_2,\cdots,x_d)$, denote
$\mathbb{J}^C$ the complementary set of $\mathbb{J}$ in $\{1,2,\cdots, d\}$,
and $x_{\mathbb{J}}:=\{y_1,y_2,\cdots, y_d\}$, where
 \begin{equation*}
 y_j=
 \begin{cases}
 x_j,  &\text{if } j\in  \mathbb{J},\\
 0, &\text{if }  j\notin\mathbb{J}.
 \end{cases}
 \end{equation*}
\begin{proposition}
\label{pp-fund.id.TF.partF}
Let $\mathbb{J}\subset \{1,2,\cdots, d\}$, the window function $g\in\scrS\setminus\{0\}$, then
	\[
	V_gf(x,w)
	=
	e^{-2\pi i x_{\mathbb{J}}\cdot w_{\mathbb{J}}}
	\cdot
	V_{\fJ g} \fJ f(w_\mathbb{J}+x_{\mathbb{J}^C}, -x_\mathbb{J}+w_{\mathbb{J}^C}).
	\]
\end{proposition}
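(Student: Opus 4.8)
The plan is to reduce everything to the one-dimensional fundamental identity \eqref{itf} applied coordinate by coordinate, exploiting the tensorial structure of the short-time Fourier transform. First I would fix $\mathbb{J}\subset\{1,2,\cdots,d\}$ and write out $V_gf(x,w)$ as the integral $\int_{\rd}f(t)\overline{g(t-x)}e^{-2\pi i t\cdot w}\,dt$, then split the $t$-integration into the variables $t_j$ with $j\in\mathbb{J}$ and those with $j\in\mathbb{J}^C$. The key observation is that for each coordinate $j\in\mathbb{J}$ we may insert the one-dimensional Fourier transform $\scrF_j$ and its inverse, i.e. use Parseval/Plancherel in the $t_j$ variable, which trades the factor $f(t)\overline{g(t-x)}e^{-2\pi i t_j w_j}$ for an expression involving $\fJ f$ and $\fJ g$ evaluated at shifted and reflected frequency variables. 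The coordinates $j\in\mathbb{J}^C$ are left untouched.

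The cleanest way to organize the computation is to reduce to $d=1$ and $\mathbb{J}=\{1\}$, where the claim becomes exactly the statement $V_gf(x,w)=e^{-2\pi i x w}V_{\scrF g}\scrF f(w,-x)$, which is precisely \eqref{itf} (with $\hat g=\scrF g$, $\hat f=\scrF f$). Then, since both sides of the asserted identity factor as products over the coordinates — the left side via Lemma \ref{lm-sp}-type tensorization of $V_gf$ when $f$ and $g$ are tensor products, and the right side likewise — the general identity follows by multiplicativity: on coordinates in $\mathbb{J}$ one applies the one-dimensional identity \eqref{itf}, which produces the phase $e^{-2\pi i x_j w_j}$, sends the spatial argument $x_j$ to the frequency slot and the frequency argument $w_j$ to the spatial slot with a sign flip, and replaces $g,f$ by $\scrF_j g,\scrF_j f$; on coordinates in $\mathbb{J}^C$ one does nothing. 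Collecting the phases gives $e^{-2\pi i x_{\mathbb{J}}\cdot w_{\mathbb{J}}}$, and collecting the rearranged arguments gives that the first (spatial) argument of $V_{\fJ g}\fJ f$ is $w_j$ for $j\in\mathbb{J}$ and $x_j$ for $j\in\mathbb{J}^C$, i.e. $w_\mathbb{J}+x_{\mathbb{J}^C}$, while the second (frequency) argument is $-x_j$ for $j\in\mathbb{J}$ and $w_j$ for $j\in\mathbb{J}^C$, i.e. $-x_\mathbb{J}+w_{\mathbb{J}^C}$.

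To make the tensorization argument rigorous without assuming $f,g$ are tensor products, I would instead argue directly: write $V_gf(x,w)=\int_{\rd}f(t)\overline{g(t-x)}e^{-2\pi i t\cdot w}\,dt$, group the exponential as $\prod_{j\in\mathbb{J}}e^{-2\pi i t_j w_j}\prod_{j\in\mathbb{J}^C}e^{-2\pi i t_j w_j}$, and for the $\mathbb{J}$-block apply the Fourier inversion identity $\int_{\rr}F(t_j)e^{-2\pi i t_j w_j}\,dt_j=\overline{\int_{\rr}\overline{F(t_j)}e^{2\pi i t_j w_j}\,dt_j}$ combined with the Plancherel-type manipulation that converts $\int f(\cdots,t_j,\cdots)\overline{g(\cdots,t_j-x_j,\cdots)}e^{-2\pi i t_j w_j}\,dt_j$ into $e^{-2\pi i x_j w_j}\int (\scrF_j f)(\cdots,s_j,\cdots)\overline{(\scrF_j g)(\cdots,s_j-w_j,\cdots)}e^{2\pi i s_j x_j}\,ds_j$ — which is exactly the substance of the one-dimensional identity \eqref{itf} applied in the $j$-th slot. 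Iterating over $j\in\mathbb{J}$ and then recognizing the remaining integral as $V_{\fJ g}\fJ f$ evaluated at the claimed arguments completes the proof. The main obstacle is purely bookkeeping: tracking which variable lands in which slot and with which sign after $|\mathbb{J}|$ successive partial transforms, and being careful about the conjugation on the window (since $\overline{g(t-x)}$ means the translation is by $x_j$ before conjugation, so after the partial Fourier transform the window $\scrF_j g$ is modulated/translated appropriately, producing the $-x_j$ in the frequency argument). I would handle this by doing $d=1$ in full detail and then stating that the general case is the same computation applied slotwise, so no genuinely new difficulty arises beyond indices.
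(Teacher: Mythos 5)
Your proposal is correct and in substance identical to the paper's proof: both rest on the fact that the partial Fourier transform $\fJ$ swaps translation and modulation (up to the phase $e^{-2\pi i x_{\mathbb{J}}\cdot w_{\mathbb{J}}}$ and a sign flip) in the $\mathbb{J}$-coordinates while commuting with them in the $\mathbb{J}^C$-coordinates. The only presentational difference is that the paper packages your slot-by-slot Plancherel manipulation into the single operator identity $\fJ(M_{w}T_{x}g)=e^{2\pi i x_{\mathbb{J}}\cdot w_{\mathbb{J}}}\,M_{-x_{\mathbb{J}}+w_{\mathbb{J}^C}}T_{w_{\mathbb{J}}+x_{\mathbb{J}^C}}\fJ g$ followed by $V_gf(x,w)=\langle \fJ f,\fJ(M_wT_xg)\rangle$, which handles all coordinates of $\mathbb{J}$ at once, disposes of the bookkeeping you were worried about, and makes the tensor-product/density route unnecessary.
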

\begin{proof}
  When $\mathbb{J}=\emptyset$, it holds obviously, and when $\mathbb{J}=\{1,2,\cdots, d\}$, it is the usual fundamental identity of time frequency.

 Observing that $x=x_{\mathbb{J}}+x_{\mathbb{J}^C}$, $T_x=T_{x_\mathbb{J}} T_{x_{\mathbb{J}^C}}$ and $M_w=M_{w_\mathbb{J}} M_{w_{\mathbb{J}^C}}$,
  for $x=(x_1,x_2,\cdots,x_d)$ and $w=(w_1,w_2,\cdots,w_d)$, we have
  \begin{align*}
     \fJ( M_{w}T_x g)
    &=
     \fJ( M_{w_\mathbb{J}} M_{w_{\mathbb{J}^C}} T_{x_\mathbb{J}} T_{x_{\mathbb{J}^C}} g)
    =
     \fJ( M_{w_\mathbb{J}} T_{x_\mathbb{J}} M_{w_{\mathbb{J}^C}}  T_{x_{\mathbb{J}^C}} g)
    \\
    &=
     e^{2\pi i x_{\mathbb{J}}\cdot w_{\mathbb{J}}}
     \cdot
     M_{-x_\mathbb{J}} T_{w_\mathbb{J}} \fJ( M_{w_{\mathbb{J}^C}}  T_{x_{\mathbb{J}^C}} g)
   \\
   &=
     e^{2\pi i x_{\mathbb{J}}\cdot w_{\mathbb{J}}}
     \cdot
     M_{-x_\mathbb{J}} T_{w_\mathbb{J}} M_{w_{\mathbb{J}^C}}  T_{x_{\mathbb{J}^C}} \fJ g
   \\
   &=
     e^{2\pi i x_{\mathbb{J}}\cdot w_{\mathbb{J}}}
     \cdot
     M_{-x_\mathbb{J}} M_{w_{\mathbb{J}^C}} T_{w_\mathbb{J}}  T_{x_{\mathbb{J}^C}} \fJ g
   \\
   &=
     e^{2\pi i x_{\mathbb{J}}\cdot w_{\mathbb{J}}}
     \cdot
     M_{-x_\mathbb{J}+w_{\mathbb{J}^C}} T_{w_\mathbb{J}+x_{\mathbb{J}^C}} \fJ g.
  \end{align*}
  Draw support from this, we have
  \begin{align*}
    V_gf(x,w)
    &=
    \langle f, M_{w}T_x g \rangle
    =
    \langle \fJ f, \fJ( M_{w}T_x g) \rangle
    \\
    &=
     e^{-2\pi i x_{\mathbb{J}}\cdot w_{\mathbb{J}}}
     \cdot
     V_{\fJ g} \fJ f(w_\mathbb{J}+x_{\mathbb{J}^C}, -x_\mathbb{J}+w_{\mathbb{J}^C}).
  \end{align*}
\end{proof}
In order to establish the estimates of Hausdorff operator from below, we introduce the following partial Fourier modulation space.
Subsequently we will give the embedding relation between this space and the mixed Lebesgue space.

\begin{definition}[The partial Fourier modulation space]
	Let $0<p,q\leq \infty$.
	The function space $\fJmpqd$, which we call the partial Fourier modulation space,   consists of all tempered distributions $f\in\scrS'$ such that
	\[
	\|f\|_{\fJmpqd}:=\|\fJ^{-1} f\|_{\mpqd}
	\]
	is finite.
\end{definition}

\begin{proposition}[Embedding between $\fJmpq$ and $\lqp$]\label{pp-embed-fjmpq-lpq}
  Let $1/2\leq 1/p\leq 1/q\leq 1$ and $\mathbb{J}\subset\{1,2,\cdots,d\}$, we have $\fJmpqd \hookrightarrow\lqp(\bbR^J\times\bbR^{d-J})$.
\end{proposition}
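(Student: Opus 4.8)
The plan is to pass to the modulation side and then localize in frequency. Writing $g=\fJ^{-1}f$, so that $f=\fJ g$ and $\|f\|_{\fJmpqd}=\|g\|_{\mpqd}$, the statement reduces to
\[
  \|\fJ g\|_{L^{q,p}(\bbR^J\times\bbR^{d-J})}\lesssim\|g\|_{\mpqd},\qquad g\in\scrS(\rd),
\]
the general case following by density since $p,q<\infty$. I would use decomposition functions of tensor type, $\s_k(\xi)=\prod_{j=1}^d\s^{(1)}(\xi_j-k_j)$ (legitimate by the remark after Definition \ref{df-Mc}), and split each $k\in\zd$ as $k=(k_1,k_2)$ according to the coordinates in $\bbJ$ and in $\bbJ^C$. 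Setting $g_{k_1}:=\sum_{k_2}\Box_{(k_1,k_2)}g$ one has $\sum_{k_2}\s_{(k_1,k_2)}(\xi)=\prod_{j\in\bbJ}\s^{(1)}(\xi_j-(k_1)_j)$, which depends only on the $\bbJ$-frequencies and is supported in a fixed bounded cube $\wt{Q}_{k_1}$; hence $g=\sum_{k_1}g_{k_1}$ and each $g_{k_1}$ is band-limited, in its $\bbJ$-frequency variables, to $\wt{Q}_{k_1}$.

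The structural point is that $\fJ g_{k_1}$ is then \emph{supported} in $\{x:\,x_{\bbJ}\in\wt{Q}_{k_1}\}$: for each fixed value of the $\bbJ^C$-variables, $x_{\bbJ}\mapsto\fJ g_{k_1}(x_{\bbJ},x_{\bbJ^C})$ is the Fourier transform, in the $\bbJ$-variables, of the corresponding slice of $g_{k_1}$, whose frequency support lies in $\wt{Q}_{k_1}$. Consequently the functions $f_{k_1}:=\fJ g_{k_1}$ (which sum to $f$) have bounded overlap in the $\bbJ$-directions. I would then estimate the inner $L^q$-norm in $x_{\bbJ}$: bounded overlap removes the $k_1$-sum up to a constant at the cost of an $\ell^q_{k_1}$; on each cube $\wt{Q}_{k_1}$ the hypothesis $q\le p$ lets me pass from $L^q_{x_{\bbJ}}$ to $L^p_{x_{\bbJ}}$ by H\"older; and since $x_{\bbJ}\mapsto f_{k_1}(x_{\bbJ},x_{\bbJ^C})$ is the Fourier transform of a slice of $g_{k_1}$ band-limited to $\wt{Q}_{k_1}$, a further application of H\"older on $\wt{Q}_{k_1}$ together with the Hausdorff--Young inequality (where $p\le2$ enters) gives $\|f_{k_1}(\cdot,x_{\bbJ^C})\|_{L^p_{x_{\bbJ}}}\lesssim\|g_{k_1}(\cdot,x_{\bbJ^C})\|_{L^p_{y_{\bbJ}}}$. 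Taking the outer $L^p$-norm in $x_{\bbJ^C}$ and invoking Minkowski's inequality (legitimate since $p/q\ge1$, i.e. again $q\le p$) yields
\[
  \|\fJ g\|_{L^{q,p}(\bbR^J\times\bbR^{d-J})}\lesssim\Big(\sum_{k_1}\|g_{k_1}\|_{L^p(\rd)}^q\Big)^{1/q}.
\]

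It then remains to bound $\|g_{k_1}\|_{L^p(\rd)}$, uniformly in $k_1$, by $\big(\sum_{k_2}\|\Box_{(k_1,k_2)}g\|_{L^p(\rd)}^q\big)^{1/q}$; summing the $q$-th powers over $k_1$ reproduces $\|g\|_{\mpqd}$ and closes the argument. For this I would apply the well-known embedding $\mpqd\hookrightarrow L^p(\rd)$, valid in the present range $1\le q\le p\le2$ (so $q\le\min\{p,p'\}$), to $g_{k_1}$, together with the routine bookkeeping $\|g_{k_1}\|_{\mpqd}\lesssim\big(\sum_{k_2}\|\Box_{(k_1,k_2)}g\|_{L^p(\rd)}^q\big)^{1/q}$ coming from the bounded overlap of the supports of the $\s_{(k_1,k_2)}$. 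Alternatively, since $g_{k_1}=\sum_{k_2}\Box_{(k_1,k_2)}g$ is a superposition of pieces with bounded-overlap Fourier supports, one has the Plancherel--P\'olya type bound $\|g_{k_1}\|_{L^p}\lesssim\big(\sum_{k_2}\|\Box_{(k_1,k_2)}g\|_{L^p}^p\big)^{1/p}$ for $1\le p\le2$ (obtained by interpolating the trivial estimate on $\ell^1(L^1)$ with the orthogonality estimate on $\ell^2(L^2)$ for the synthesis operator of a slightly fattened partition), after which $\ell^q\hookrightarrow\ell^p$ (since $q\le p$) finishes.

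I expect the main difficulty to be keeping the physical direction ($x_{\bbJ}$, where one wants an $L^q$ norm) and the frequency direction ($x_{\bbJ^C}$, where the $\mpqd$-structure of $g$ resides) correctly coupled: Hausdorff--Young must be applied \emph{slicewise} in $x_{\bbJ^C}$---which is precisely what forces the tensor choice of $\s_k$ and the band-limited reduction above---and the slices must then be reassembled by Minkowski's inequality in the right order. It is exactly the two conditions $p\le2$ (for Hausdorff--Young) and $q\le p$ (for the two H\"older steps, for Minkowski, and for $\ell^q\hookrightarrow\ell^p$) that make every step go through, which explains why the range $1/2\le1/p\le1/q\le1$ is the natural one.
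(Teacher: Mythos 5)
Your proof is correct, but it runs along a genuinely different track from the paper's. The paper stays entirely on the continuous side: after reducing to $\mathbb{J}=\{1,\dots,J\}$ by an orthogonal permutation (Lemma \ref{lm-orthogonal}), it takes a tensor window, invokes the partial fundamental identity of Proposition \ref{pp-fund.id.TF.partF} to express $\|f\|_{\fJmpqd}$ as an iterated $L^{p,p,q,q}$ norm of $V_{\varphi_1\otimes\varphi_2}f$ with the time and frequency slots swapped in the $\mathbb{J}$ block, and then descends to $\|f\|_{\lqp}$ through a chain of Hausdorff--Young, H\"older (on the compact supports of $\varphi_1,\varphi_2$) and Minkowski steps that successively shuffle the four exponents. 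You instead work with the discrete frequency-uniform decomposition: choosing a tensor-type $\s_k$ and summing over the $\mathbb{J}^C$-indices produces pieces $g_{k_1}$ whose $\mathbb{J}$-frequency support is a fixed cube, so that $\fJ g_{k_1}$ is \emph{physically} supported in $\{x: x_{\mathbb{J}}\in\widetilde{Q}_{k_1}\}$ with bounded overlap; this correctly converts the problem into a countable family of band-limited estimates, each handled by H\"older on a unit cube plus slicewise Hausdorff--Young, reassembled by Minkowski and closed by the synthesis bound $\|g_{k_1}\|_{L^p}\lesssim(\sum_{k_2}\|\Box_{(k_1,k_2)}g\|_{L^p}^p)^{1/p}$ (or by $M^{p,q}\hookrightarrow L^p$ for $q\le p\le 2$). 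The analytic core is identical --- $p\le 2$ enters only through Hausdorff--Young and $q\le p$ only through H\"older/Minkowski/$\ell^q\hookrightarrow\ell^p$ --- but your discretization buys two things: arbitrary $\mathbb{J}$ is handled symmetrically without the permutation reduction, and the delicate reordering of four-fold mixed norms in the paper is replaced by a single, more transparent Minkowski step over the index $k_1$. The price is the extra (routine but necessary) bookkeeping relating $\sum_{k_1}\|g_{k_1}\|_{L^p}^q$ to $\|g\|_{\mpqd}^q$, which you correctly identify and for which both of your proposed justifications are valid in the range $1\le q\le p\le 2$.
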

\begin{proof}
We only give the proof for  nonempty $\mathbb{J}$ which is a proper subset of  $\{1,2,\cdots,d\}$, for other cases are similar.

To begin with,  we deal with the special case that $\mathbb{J}=\{1,2,\cdots,J\}$.
Let nonzero functions $\varphi_1\in C_c^{\infty}(\bbR^{J})$, $\varphi_2\in C_c^{\infty}(\bbR^{d-J})$.
Using Proposition \ref{pp-fund.id.TF.partF},  we have
  \begin{align*}
    \|f\|_{\fJmpq}
    &=
      \|\scrF_{\mathbb{J}}^{-1} f\|_{\mpq}
      =
      \|
        V_{\breve{\varphi}_1\otimes\varphi_2} \big( \scrF_{\mathbb{J}}^{-1} f\big)(\bx_1,\bx_2,\bxi_1,\bxi_2)
      \|_{L_{\bx_1,\bx_2,\bxi_1,\bxi_2}^{p,p,q,q}}
    \\
    &=
     \|
       V_{\varphi_1\otimes\varphi_2} f(\bxi_1,\bx_2,-\bx_1,\bxi_2)
     \|_{L_{\bx_1,\bx_2,\bxi_1,\bxi_2}^{p,p,q,q}}
     \\
     &=
     \Bigg\|
       \scrF_{\mathbb{J}}
        \bigg(
          \scrF_{\mathbb{J^C}}
           \big(
            f(\by_1,\by_2)\overline{\varphi_2(\by_2-\bx_2)}
           \big) (\bxi_2)
           \cdot
           \overline{\varphi_1(\by_1-\bxi_1)}
        \bigg)(-\bx_1)
     \Bigg\|_{L_{\bx_1,\bx_2,\bxi_1,\bxi_2}^{p,p,q,q}}
    \\
    &=
    \Bigg\|
      \scrF_{\mathbb{J}}
      \bigg(
       \scrF_{\mathbb{J^C}}
       \big(
       f(\by_1,\by_2)\overline{\varphi_2(\by_2-\bx_2)}
      \big) (\bxi_2)
      \cdot
      \overline{\varphi_1(\by_1-\bxi_1)}
     \bigg)(\bx_1)
   \Bigg\|_{L_{\bx_1,\bx_2,\bxi_1,\bxi_2}^{p,p,q,q}}.
  \end{align*}
  Noticing that $\varphi_1$ and $\varphi_2$ have compact supports in $\bbR^J$ and $\bbR^{d-J}$ respectively,
  and $1/2\leq 1/p\leq 1/q\leq 1$ which yields that $1/q'\leq1/p'\leq 1/2\leq 1/p\leq 1/q\leq 1$,
  we have
  \begin{align*}
    \|f\|_{\fJmpq}
    &\geq
     \Bigg\|
          \scrF_{\mathbb{J^C}}
           \big(
            f(\bx_1,\by_2) \cdot \overline{\varphi_2(\by_2-\bx_2)}
           \big) (\bxi_2)
           \cdot
           \overline{\varphi_1(\bx_1-\bxi_1)}
     \Bigg\|_{L_{\bx_1,\bx_2,\bxi_1,\bxi_2}^{p',p,q,q}}
    \\
    &\geq
         \Bigg\|
              \scrF_{\mathbb{J^C}}
               \big(
                f(\bx_1,\by_2) \cdot \overline{\varphi_2(\by_2-\bx_2)}
               \big) (\bxi_2)
               \cdot
               \overline{\varphi_1(\bx_1-\bxi_1)}
         \Bigg\|_{L_{\bxi_2,\bx_1,\bx_2,\bxi_1}^{q,p',p,q}}
    \\
    &\geq
        \Bigg\|
               f(\bx_1,\bxi_2) \cdot \overline{\varphi_2(\bxi_2-\bx_2)}
               \cdot
               \overline{\varphi_1(\bx_1-\bxi_1)}
       \Bigg\|_{L_{\bxi_2,\bx_1,\bx_2,\bxi_1}^{q',p',p,q}}
    \\
    &\geq
        \Bigg\|
               f(\bx_1,\bxi_2) \cdot \overline{\varphi_2(\bxi_2-\bx_2)}
               \cdot
               \overline{\varphi_1(\bx_1-\bxi_1)}
       \Bigg\|_{L_{\bxi_2,\bx_2,\bx_1,\bxi_1}^{q',p,p',q}}
    \\
    &\gtrsim
        \Bigg\|
               f(\bx_1,\bxi_2) \cdot \overline{\varphi_2(\bxi_2-\bx_2)}
               \cdot
               \overline{\varphi_1(\bx_1-\bxi_1)}
       \Bigg\|_{L_{\bxi_2,\bx_2,\bx_1,\bxi_1}^{p,p,q,q}}
    \\
    &\sim
    \|f(\bx_1, \bxi_2)\|_{l^{p,q}_{\bxi_2,\bx_1}}
     \geq
      \|f(\bx_1, \bxi_2)\|_{\lqp_{\bx_1, \bxi_2}}
    =
    \|f\|_{\lqp(\bbR^J\times \bbR^{d-J})},
  \end{align*}
  where we use the Hausdorff-Young and the H\"{o}lder inequalities in the above estimates.

  Now we turn to the general case. For any $\mathbb{J}=\{j_1, j_2,\cdots,j_J\}\subset \{1,2,\cdots,d\}$,
  there is an orthogonal matrix $P$ such that
  \[
    P(x_1,x_2,\cdots,x_d)
    =(x_{j_1}, x_{j_2}, \cdots, x_{j_J},x_{i_1},\cdots, x_{i_{d-J}}),
   \]
  where $\{i_1,\cdots, i_{d-J} \}=\mathbb{J}^C$.
  Set
  $g(x)=(f\circ P^{-1}) (x).$
  We have
  \be
  g(x_{j_1}, x_{j_2}, \cdots, x_{j_J},x_{i_1},\cdots, x_{i_{d-J}})=f(x_1,x_2,\cdots,x_d).
  \ee
  Denote $\wt{\mathbb{J}}=\{1,2,\cdots J\}$.
  A direct calculation yields that
  \be
  \begin{split}
  \fJ^{-1} f(\xi)
  = &
  (\scrF^{-1}_{\wt{\mathbb{J}}}g)(P\xi).
  \end{split}
  \ee
  Using this, Lemma \ref{lm-orthogonal}, and the corresponding conclusion of special case proved above, we deduce that
  \begin{align*}
    \| f \|_{\fJmpqd}
    &= 
    \| \fJ^{-1} f \|_{\mpqd}
    = 
    \| (\scrF^{-1}_{\wt{\mathbb{J}}}g) \circ P\|_{\mpqd}
     =
      \| \scrF^{-1}_{\wt{\mathbb{J}}}g\|_{\mpqd}
     \\
     &\gtrsim
      \| g \|_{\lqp(\bbR^J\times\bbR^{d-J})}\gtrsim \| f \|_{\lqp(\bbR^J\times\bbR^{d-J})}.
  \end{align*}

\end{proof}

Next, we give the key estimate for proving the necessity of the boundedness of Hausdorff operator on modulation space.

\begin{proposition}\label{pp-FJM-Lqp}
   Let $1/2\leq 1/p\leq 1/q\leq 1$ and $\mathbb{J}\subset\{1,2,\cdots,d\}$.
   Suppose that the basic condition \eqref{weakest cond.} holds.
   Let $\Phi_{\bbJ}(y)={\Phi(y)}\prod_{j\in \bbJ}\la_j(y)^{-1}$,
   and let $\La_{\mathbb{J}}=\text{diag}\{\g_1(y), \cdots, \g_d(y)\}$ with
   \begin{equation*}
   	\g_k(y)=
   	\begin{cases}
   		\la^{-1}_j(y),  &\text{if } j\in  \mathbb{J},\\
   		\la_j(y), &\text{if }  j\notin\mathbb{J}.
   	\end{cases}
   \end{equation*}
   Then, the following boundedness
    $$H_{\Phi_{\bbJ},\La_{\bbJ}}: \fJmpq(\bbR^d) \rightarrow \lqp(\bbR^J\times\bbR^{d-J})$$
   implies that
   \begin{equation}\label{pp-FJM-Lqp-condition}
     \int_{\rn}\Phi(y) \prod_{j\in\mathbb{J}} \lambda_j(y)^{1/q-1}  \prod_{j\notin\mathbb{J}} \lambda_j(y)^{-1/p} dy<\infty.
   \end{equation}
\end{proposition}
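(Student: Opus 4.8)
The plan is to deduce \eqref{pp-FJM-Lqp-condition} from the assumed boundedness by a direct test-function argument that quantifies how large $H_{\Phi_{\bbJ},\La_{\bbJ}}$ makes a well-chosen family of inputs. First I would reduce to $\bbJ=\{1,2,\cdots,J\}$: exactly the orthogonal relabeling used in the proof of Proposition \ref{pp-embed-fjmpq-lpq}, combined with Lemma \ref{lm-orthogonal} and the $O(J)\times O(d-J)$-invariance of $\lqp(\bbR^J\times\bbR^{d-J})$, transports both the hypothesis and the conclusion to the ordered case. Writing $w(y):=\Phi(y)\prod_{j\le J}\lambda_j(y)^{1/q-1}\prod_{j>J}\lambda_j(y)^{-1/p}$ (we are in the relevant case $\Phi\ge0$, cf. Theorem \ref{thm-bdh}), the goal is $\int_{\rn}w<\infty$.

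Next I would dyadically decompose the parameter space: $\rn=\bigsqcup_n E_n$ with $E_n=\{y\in\rn:\ 2^{m_j(n)}\le\lambda_j(y)<2^{m_j(n)+1},\ 1\le j\le d\}$, and $a_n:=\int_{E_n}\Phi(y)\prod_{j\le J}\lambda_j(y)^{-1}\,dy$, which is finite by \eqref{weakest cond.}. A computation gives $\int_{\rn}w\sim\sum_n a_n\kappa_n$ with $\kappa_n:=\prod_{j\le J}2^{m_j(n)/q}\prod_{j>J}2^{-m_j(n)/p}$, so it suffices to show $\sum_n a_n\kappa_n<\infty$. On $E_n$ the matrix $\La_{\bbJ}(y)$ is entrywise within a factor $2$ of the fixed diagonal matrix $\La_{\bbJ}^{(n)}$ with diagonal entries $2^{-m_j(n)}$ for $j\le J$ and $2^{m_j(n)}$ for $j>J$.

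Then I would test on scale-adapted bumps. Fix $\phi_0\in\calS(\rd)$ with $\phi_0\ge\chi_{[-1,1]^d}\ge0$ and, for each cell, put $\phi_n:=D_{(\La_{\bbJ}^{(n)})^{-1}}\phi_0$ (possibly also translating it by some $v_n\in\rd$ when several cells must be superposed, so that the resulting pieces are disjointly supported after projection to the $\bbR^{d-J}$-block and are frequency-separated in the $\bbJ$-coordinates). Two facts drive the estimate. On the target side, for every cell $E_{n'}$ with $\La_{\bbJ}^{(n')}\le\La_{\bbJ}^{(n)}$ entrywise the matrix $(\La_{\bbJ}^{(n)})^{-1}\La_{\bbJ}(y)$ has bounded entries on $E_{n'}$, so $D_{\La_{\bbJ}(y)}\phi_n$ dominates the indicator of a large box whose side lengths grow with $n'$; hence $H_{\Phi_{\bbJ},\La_{\bbJ}}\phi_n$ is bounded below by a sum of $a_{n'}$ times such indicators, and taking its $\lqp$-norm yields a lower bound for $\|H_{\Phi_{\bbJ},\La_{\bbJ}}\phi_n\|_{\lqp}$ that ``sees'' all cells with $\La_{\bbJ}^{(n')}\le\La_{\bbJ}^{(n)}$. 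On the source side, $\fJ^{-1}\phi_n=(\prod_{j\le J}2^{-m_j(n)})D_{\La^{(n),-1}}(\fJ^{-1}\phi_0)$ with $\La^{(n),-1}=\text{diag}(2^{-m_1(n)},\cdots,2^{-m_d(n)})$, so by Theorem \ref{thm-DL}, $\|\phi_n\|_{\fJmpqd}=\|\fJ^{-1}\phi_n\|_{\mpqd}\sim(\prod_{j\le J}2^{-m_j(n)})\prod_{j=1}^d\G_{p,q}(2^{-m_j(n)})$. Feeding these into $\|H_{\Phi_{\bbJ},\La_{\bbJ}}\phi_n\|_{\lqp}\le C\|\phi_n\|_{\fJmpqd}$, and summing over $n$ — using superpositions of the $\phi_n$ where a single cell does not suffice, and the constraint $1/2\le1/p\le1/q\le1$ to run the various $\ell^p$/$\ell^q$ summations in the favorable direction — one concludes $\sum_n a_n\kappa_n<\infty$, which is \eqref{pp-FJM-Lqp-condition}. (Density is handled by finite truncations.)

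The hard part will be precisely this passage from the local (per-cell) estimates to the global statement $\int_{\rn}w<\infty$. A single cell only yields an inequality of the type $a_n\lesssim(\prod_{j\le J}2^{-m_j(n)})\prod_j\G_{p,q}(2^{-m_j(n)})$ (or its strengthened form obtained from the coarser-scale contributions), which by itself is too weak to be summed; one must combine many cells simultaneously, either by arranging the $H_{\Phi_{\bbJ},\La_{\bbJ}}$-responses to be spatially disjoint — via translations $v_n$, whose magnitudes then have to be controlled since they also dilate and smear the responses and shift the frequency supports governing the orthogonality in the source space — or by exploiting the ordering of the associated boxes so that the relevant tail sums telescope. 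Moreover, as with $\G_{p,q}$ and the estimates in Theorem \ref{thm-DL}, which branch over the regions $\calA_i,\calB_i$, the exponent arithmetic matching $\|H_{\Phi_{\bbJ},\La_{\bbJ}}\phi_n\|_{\lqp}$ against $\|\phi_n\|_{\fJmpqd}$ splits according to the signs of the $m_j(n)$ (whether $\lambda_j(y)\gtrless1$), and verifying that $1/2\le1/p\le1/q\le1$ makes every branch close is the technical heart of the argument.
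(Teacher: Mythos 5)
Your overall strategy (test functions adapted to the dilations, upper bound of the source norm via Lemmas \ref{lm-sp}, \ref{lm-lp-1}, \ref{lm-lp-2}, pointwise lower bound of the response using $\Phi\geq 0$) points in the right direction, but the proposal has a genuine gap precisely at the step you yourself flag as ``the technical heart''. Testing one scale-adapted bump $\phi_n$ per dyadic cell $E_n$ only yields bounds of the form $\sum_{n':\,\La_{\bbJ}^{(n')}\leq\La_{\bbJ}^{(n)}}a_{n'}\lesssim\|\phi_n\|_{\fJmpqd}$, i.e.\ uniform control of certain weighted tail sums; this never implies the summability $\sum_n a_n\kappa_n<\infty$ that you need. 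To upgrade it you must superpose many bumps with prescribed coefficients, and neither of the two mechanisms you sketch is carried out or clearly viable: translating the bumps does not decouple the responses, because $D_{\La(y)}T_{v_n}=T_{\La(y)^{-1}v_n}D_{\La(y)}$ smears each response over a $y$-dependent family of translates, so disjointness on the target side and quasi-orthogonality in $\fJmpqd$ on the source side fight each other; and the ``telescoping'' alternative is not specified at all. As written, the proof establishes only the per-cell inequalities and defers the actual conclusion.

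The paper closes exactly this gap with a different device: instead of a superposition of bumps it takes the single tensor-product test function $f=\prod_j f_j$ with $f_j=(|\cdot|^{-1/q}\chi_{[2^{-1},2^{M+1}]})\ast\eta$ for $j\in\bbJ$ (and exponent $-1/p$ otherwise). The power-law profile is quasi-invariant under the dilations $\la_j(y)^{\pm1}$, so for all $y$ in the truncated set $E_N=\{y:\la_j(y)\in[2^{-N},2^N]\}$ the dilated copies all dominate $|x_j|^{-1/q}$ on the \emph{common} interval $[2^N,2^{M-N}]$; positivity of $\Phi$ then makes every scale contribute coherently there, giving the lower bound $\big(\int_{E_N}\Phi\prod_{j\in\bbJ}\la_j^{1/q-1}\prod_{j\notin\bbJ}\la_j^{-1/p}\big)(M-2N)^{J/q+(d-J)/p}$, while the source norm is $\lesssim(M+2)^{J/q+(d-J)/p}$. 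Letting $M\to\fy$ and then $N\to\fy$ finishes the proof, with no disjointness or orthogonality bookkeeping needed. In effect the power-law profile \emph{is} the correctly weighted superposition $\sum_m 2^{-m/q}\phi_m$ you were trying to build, realized so that overlap of the responses helps rather than hurts. I would encourage you to either adopt this construction or supply a complete argument for the superposition step; without one the proposal does not prove \eqref{pp-FJM-Lqp-condition}.
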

\begin{proof}
  Let $M, N$ be two sufficiently large positive numbers with $M>2N$.
  Choose a nonnegative function $\eta\in\calS(\rd)$ with $\eta(0)=1$,
  satisfying that $\scrF\eta$ is supported on $B(0,1)$.
  Denote  $F_M:=[2^{-1},2^{M+1}]$ and $\wt{F}_M:=[1,2^{M}]$.
  Let
  $
    f(x):=\prod_{j=1}^d f_j(x_j)
  $, where
  \begin{equation*}
  f_j(x)=
  \begin{cases}
  \big( |\cdot|^{-1/q}\chi_{F_M}(\cdot) \big) \ast\eta,  &j\in\mathbb{J},\\
  \big( |\cdot|^{-1/p}\chi_{F_M}(\cdot) \big) \ast\eta,  &j\notin\mathbb{J}.
  \end{cases}
  \end{equation*}

  Noticing that $f_j\in \calS(\rr)$ and
   $\scrF{f_j}$ also has compact support, $j=1, 2, \cdots, d$,
  and
  by Lemmas \ref{lm-sp}, \ref{lm-lp-1} and \ref{lm-lp-2}, we have
  \begin{equation}\label{pp-FJM-Lqp-right}
  \begin{split}
    \|f\|_{\fJmpq(\bbR^d)}
    &=
    \Big\| \prod_{j=1}^d f_j(x_j) \Big\|_{\fJmpq(\bbR^d)}
     =
     \Big\|\prod_{j\in\mathbb{J}} \scrF^{-1}f_j(x_j)
      \cdot
      \prod_{j\notin\mathbb{J}} f_j(x_j) \Big\|_{\mpq(\bbR^d)}
    \\
    &=
    \prod_{j\in\mathbb{J}} \|\scrF^{-1}f_j\|_{\mpq(\bbR)}
     \cdot
     \prod_{j\notin\mathbb{J}} \| f_j\|_{\mpq(\bbR)}
    \\
    &\sim
    \prod_{j\in\mathbb{J}} \|f_j\|_{L^q(\bbR)}
     \cdot
     \prod_{j\notin\mathbb{J}} \| f_j\|_{L^p(\bbR)}
    \\
    &\lesssim
    \prod_{j\in\mathbb{J}} \||\cdot|^{-1/q}\chi_{F_M}(\cdot)\|_{L^q(\bbR)}
     \cdot
     \prod_{j\notin\mathbb{J}} \| |\cdot|^{-1/p}\chi_{F_M}(\cdot)\|_{L^p(\bbR)}
    \\
    &\sim
    (M+2)^{J/q+(d-J)/p}.
  \end{split}
  \end{equation}

   Next, we turn to the lower estimate of $H_{\Phi_{\bbJ},\La_{\bbJ}}$.
   Observing that $\eta(0)=1$, there is a small positive constant $\d$ such that $\eta(x)\geq1/2$ for all $|x|<\delta$.
   For $r\in\bbR$, a direct calculation yields that
  \begin{align*}
    \Big(\big( |\cdot|^r\chi_{F_M}(\cdot) \big) \ast\eta\Big) (x)
    &=
    \int_{\bbR} |t|^r \cdot \chi_{F_M}(t) \cdot \eta(x-t)dt
    \\
    &\gtrsim
    \int_{B(x,\delta)} |t|^rdt
    \gtrsim
    |x|^r \cdot \chi_{\wt{F}_M}(x).
  \end{align*}
  Using this, 
  we conclude that
  \begin{align*}
    \|H_{\Phi_{\bbJ},\La_{\bbJ}} f\|_{ \lqp(\bbR^J\times\bbR^{d-J}) }
     =&
     \Big\|
     \int_{\rn}
      \Phi(y) \cdot \prod_{j\in\mathbb{J}} \lambda_j(y)^{-1}
      \cdot
      \prod_{j\in\mathbb{J}} \Big( \big( |\cdot|^{-1/q}\chi_{F_M}(\cdot) \big) \ast\eta \Big)(\lambda_j^{-1}(y)x_j)
      \\
      &\cdot
      \prod_{j\notin\mathbb{J}} \Big( \big( |\cdot|^{-1/p}\chi_{F_M}(\cdot) \big) \ast\eta \Big)(\lambda_j(y)x_j)
     dy
     \Big\|_{ \lqp(\bbR^J\times\bbR^{d-J}) }
    \\
    \gtrsim&
    \Big\|
     \int_{E_N}
      \Phi(y)
      \cdot \prod_{j\in\mathbb{J}} \lambda_j(y)^{1/q-1}
      \cdot
      \prod_{j\notin\mathbb{J}}  \lambda_j(y)^{-1/p}
      \cdot
      \prod_{j\in\mathbb{J}}   |x_j|^{-1/q}   \chi_{\wt{F}_M}(\lambda_j^{-1}(y)x_j)
      \\
      &\cdot
      \prod_{j\notin\mathbb{J}}  |x_j|^{-1/p}  \chi_{\wt{F}_M}(\lambda_j(y)x_j)
     dy
     \Big\|_{ \lqp(\bbR^J\times\bbR^{d-J}) },
  \end{align*}
  where $E_N:=\{y\in\rn: \lambda_j(y)\in[2^{-N}, 2^N], j=1, 2, \cdots, d\}$.
  Denote
  \begin{equation*}
  G_{M,N}=[2^N,    2^{M-N}].
  \end{equation*}
  For any $y\in E_N$ and $x_j\in G_{M,N}$, we have
  $\lambda_j^{-1}(y)x_j\in \wt{F}_M$ when $j\in\mathbb{J}$, and
  $\lambda_j(y)x_j\in \wt{F}_M$ when $j\notin\mathbb{J}$.
  Then we have
  \begin{equation}\label{pp-FJM-Lqp-left}
  \begin{split}
    \|H_{\Phi_{\bbJ},\La_{\bbJ}} f\|_{ \lqp(\bbR^J\times\bbR^{d-J}) }
     \gtrsim&
    \Big\|
     \int_{E_N}
      \Phi(y)
      \cdot \prod_{j\in\mathbb{J}} \lambda_j(y)^{1/q-1}
      \cdot
      \prod_{j\notin\mathbb{J}}  \lambda_j(y)^{-1/p}
      \cdot
      \prod_{j\in\mathbb{J}}   |x_j|^{-1/q}
      \\
      &\cdot
      \prod_{j\notin\mathbb{J}}  |x_j|^{-1/p}
     dy
     \Big\|_{ \lqp(G_{M,N}^{J}\times G_{M,N}^{d-J}) }
     \\
     =&
     \int_{E_N}
      \Phi(y)
      \cdot \prod_{j\in\mathbb{J}} \lambda_j(y)^{1/q-1}
      \cdot
      \prod_{j\notin\mathbb{J}}  \lambda_j(y)^{-1/p}
      dy
      \\
      &\cdot
      \prod_{j\in\mathbb{J}}
      \|
      |x_j|^{-1/q}
      \|_{ L^q(G_{M,N}) }
      \cdot
      \prod_{j\notin\mathbb{J}}
      \|
       |x_j|^{-1/p}
      \|_{ L^p(G_{M,N}) }
     \\
     \sim&
     \int_{E_N}
      \Phi(y)
      \cdot \prod_{j\in\mathbb{J}} \lambda_j(y)^{1/q-1}
      \cdot
      \prod_{j\notin\mathbb{J}}  \lambda_j(y)^{-1/p}
      dy
      \cdot
      (M-2N)^{J/q+(d-J)/p}.
  \end{split}
  \end{equation}
\end{proof}

Hence, if the boundedness of $H_{\Phi_{\bbJ},\La_{\bbJ}} : \fJmpq(\bbR^d) \rightarrow \lqp(\bbR^J\times\bbR^{d-J})$ holds,
by the upper estimate (\ref{pp-FJM-Lqp-right}) and the lower estimate (\ref{pp-FJM-Lqp-left}),
we obtain that
\be
\begin{split}
	&\|H_{\Phi_{\bbJ},\La_{\bbJ}} \|_{\fJmpq(\bbR^d) \rightarrow \lqp(\bbR^J\times\bbR^{d-J})}
	\\
	\gtrsim &
	\int_{E_N}
	\Phi(y)
	\cdot \prod_{j\in\mathbb{J}} \lambda_j(y)^{1/q-1}
	\cdot
	\prod_{j\notin\mathbb{J}}  \lambda_j(y)^{-1/p}
	dy
	\cdot
	\frac{(M-2N)^{J/q+(d-J)/p}}{(M+2)^{J/q+(d-J)/p}}.
\end{split}
\ee
Letting $M\rightarrow\infty$, we have
\begin{align*}
  \int_{E_N}
      \Phi(y)
      \cdot \prod_{j\in\mathbb{J}} \lambda_j(y)^{1/q-1}
      \cdot
      \prod_{j\notin\mathbb{J}}  \lambda_j(y)^{-1/p}
      dy
  \lesssim
   \|H_{\Phi_{\bbJ},\La_{\bbJ}} \|_{\fJmpq(\bbR^d) \rightarrow \lqp(\bbR^J\times\bbR^{d-J})}.
\end{align*}
Finally, we obtain (\ref{pp-FJM-Lqp-condition}) by letting $N\rightarrow\infty$.
\begin{proof}[Proof of Theorem \ref{thm-bdh}]
We first check that the condition \eqref{thm-bdh-cd} implies the basic condition \eqref{weakest cond.}.
This follows by the fact
\be
1\wedge \la^{-1}\leq \max\{\la^{-1/p}, \la^{1/q-1}, \la^{-2/p+1/q}\}=\G_{p,q}(\la),\ \ \ \la>0.
\ee
By this fact and Proposition \ref{thm-dfH}, the Hausdorff operator $H_{\Phi,A}$ is well defined.
For $f, \va \in\calS(\rd)$, we write
\be
\begin{split}
V_{\va}(H_{\Phi,A}f)(x,\xi)
= &
\lan H_{\Phi,A}f, M_{\xi}T_x\va\ran
\\
= &
\int_{\rn}\Phi(y)\lan D_{A(y)}f(x),M_{\xi}T_x\va\ran dy
\\
= &
\int_{\rn}\Phi(y)V_{\va}(D_{A(y)}f)(x,\xi) dy.
\end{split}
\ee
Using Minkowski's inequality and
the dilation property of modulation space
 (Proposition \ref{pp-diagonal}), we conclude that
  \begin{align*}
    \|H_{\Phi, A}f\|_{\mpqd}
    &=
    \|V_{\va}(H_{\Phi,A}f)(x,\xi)\|_{L^{p,q}(\rd\times \rd)}
    \\
    &=
    \Big\|\int_{\rn}\Phi(y)V_{\va}(D_{A(y)}f)(x,\xi) dy
    \Big\|_{L^{p,q}(\rd\times \rd)}
    \\
    &\leq
      \int_{\rn}|\Phi(y)|\cdot
       \big\|
         V_{\va}(D_{A(y)}f)(x,\xi)
       \big\|_{L^{p,q}(\rd\times \rd)}
      dy
    \\
    &=
    \int_{\rn}|\Phi(y)|\cdot
       \big\|
         D(A)f
       \big\|_{\mpqd}
      dy
    \\
    &\lesssim
      \int_{\rn}|\Phi(y)|\cdot
       \prod_{j=1}^d \G_{p,q}(\lambda_j(y)) dy
       \cdot
       \|f\|_{\mpqd}.
  \end{align*}
  The boundedness of $H_{\Phi, A}$ is proved.

  Next, we turn to the converse direction in the range of $(1/p-1/2)(1/q-1/p)\geqslant 0$.
  Recall that in this case, we assume $\Phi\geq 0$ and $A(y)=\La(y)=\text{diag}\{\lambda_1(y),\cdots,\lambda_d(y) \}$.
  If the boundedness \eqref{thm-bdh-bd} holds, the Hausdorff operator is a continuous map from $\calS(\rd)$ into $\calS'(\rd)$.
  Then, Theorem \ref{thm-dfH} tells us that the basic condition \eqref{weakest cond.} holds.

  \textbf{Case 1: $1/2\leq 1/p \leq 1/q$.}
  It follows directly by the definition of $\G_{p,q}$ that for all $y\in \rn$,
  \[
    \prod_{j=1}^d \G_{p,q}(\lambda_j(y))
    \sim
    \sum_{\mathbb{J}\subset \{1,2,\cdots,d\}}
     \prod_{j\in\mathbb{J}} \lambda_j(y)^{1/q-1} \cdot \prod_{j\notin\mathbb{J}}  \lambda_j(y)^{-1/p},
  \]
  where the implicit constants are independent of $y$.
  Thus, the condition \eqref{thm-bdh-cd} is equal to
  \begin{align}
  \int_{\rn}
      \Phi(y)
      \cdot \prod_{j\in\mathbb{J}} \lambda_j(y)^{1/q-1}
      \cdot
      \prod_{j\notin\mathbb{J}}  \lambda_j(y)^{-1/p}
      dy
  <\infty, \ \ \ \ \ \text{for all }\mathbb{J}\subset\{1,2,\cdots,d\}.\label{thm-H-bdd-cond.2}
  \end{align}
  With the basic condition \eqref{weakest cond.}, the partial Fourier transform can be well defined on $H_{\Phi, A}f$ for $f\in \calS(\rd)$. 
  In fact, for any $f \in \calS(\rd)$ and $\bbJ\subset \{1,2,\cdots,d\}$,  we have $\fJ H_{\Phi,\La}f=H_{\Phi_{\bbJ},\La_{\bbJ}}\fJ f$.
  Using this and the boundedness of $H_{\Phi, A}$, we conclude that for any $f \in \calS(\rd)$ and $\bbJ\subset \{1,2,\cdots,d\}$,
  \begin{equation*}
    \begin{split}
      \| H_{\Phi_{\bbJ},\La_{\bbJ}}  \fJ f \|_{\fJmpqd}
      &=
      \|\fJ H_{\Phi,A}f\|_{\fJmpqd}
      =
      \|H_{\Phi,A}f\|_{\mpqd}
      \\
      &\lesssim
      \|f\|_{\mpqd}
      =
      \|\fJ f\|_{\fJmpqd}.
    \end{split}
  \end{equation*}
  So we obtain the boundedness of $H_{\Phi_{\bbJ},\La_{\bbJ}}$ on $\fJmpqd$ for all $\bbJ\subset \{1,2,\cdots,d\}$.
  Using this and Prposition \ref{pp-embed-fjmpq-lpq}, we obtain the boundedness
  $H_{\Phi_{\bbJ},\La_{\bbJ}}: \fJmpq(\bbR^d) \rightarrow \lqp(\bbR^J\times\bbR^{d-J})$ for all $\bbJ\subset \{1,2,\cdots,d\}$.
  Then, (\ref{thm-H-bdd-cond.2}) follows by this and Proposition \ref{pp-FJM-Lqp}.

  \textbf{Case 2: $1/q \leq 1/p \leq 1/2 $.}
  Using Proposition \ref{R. Adj. Hausdorff} and the dual property of modulation space (see \cite[Lemma 2.2]{Benyi2005} for instance), one can find that
  $H^*_{\Phi,A}$ is bounded on $M^{p',q'}$ if $H_{\Phi, A}$ is bounded on $M^{p,q}$.
  Observing  that $1/2\leq 1/p' \leq 1/q'$, 
  and recaling $H^*_{\Phi,A}=H_{\wt{\Phi}, A^{-1}}$ proved in Proposition \ref{R. Adj. Hausdorff}, 
  by the conclusion proved in Case 1, we obtain that
  \begin{align}
    \int_{\rn}\wt{\Phi}(y) \cdot
    \prod_{j=1}^d \G_{p',q'}(\lambda^{-1}_j(y)) dy
    <
    \infty.
  \end{align}
  Observe that
   \begin{align}
     \wt{\Phi}(y) \cdot \prod_{j=1}^d \G_{p',q'}(\lambda^{-1}_j(y))
      =
      \Phi(y)\cdot \prod_{j=1}^d \G_{p,q}(\lambda_j(y)).
   \end{align}
   The above two estimates yiled the desired conclusion
   \begin{align*}
   \int_{\rn}\Phi(y)\cdot\prod_{j=1}^d \G_{p,q}(\lambda_j(y)) dy
   <\infty.
   \end{align*}
   We have now completed the proof.
\end{proof}
\subsection*{Acknowledgements}
This work was partially supported by the
Natural Science Foundation of Fujian
Province (Nos.2020J01267, 2020J01708, 2021J011192)

\bibliographystyle{abbrv}

\end{document}